\tikzset{line/.pic={
\draw[#1] (-0.3,0) -- (0.3,0);},
dot/.pic={
\fill[#1] (0,0) circle(.6mm);}
 }
\newtheorem{defi}{Definition}[section]
\newtheorem{lemm}{Lemma}[section]
\newtheorem{theo}{Theorem}[section]
\newtheorem*{theo*}{Theorem}
\newtheorem{prop}{Proposition}[section]
\theoremstyle{remark}
\newtheorem{rema}{Remark}[section]
\numberwithin{equation}{section}
\DeclareMathOperator{\sgn}{sgn}
\newcommand{\dd}{\textnormal{d}}
\newcommand{\R}{\mathbb{R}}
\newcommand{\Weyl}[4]{W_{#1#2\phantom{#3}#4}^{\phantom{#1#2}#3}}
\DeclareMathOperator{\detd}{\textnormal{\textbf{det}}}
\newcommand{\Q}{\M} 
\newcommand{\Qint}{\Mint} 
\newcommand{\E}{\mathcal{E}}
\newcommand{\T}{\mathcal{T}}
\newcommand{\Tscr}{\mathscr{T}}
\newcommand\rotpi{\rotatebox[origin=c]{180}{$\Pi$}}
\newcommand{\M}{M} 
\newcommand{\Mint}{\mathring{M}}
\newcommand{\B}{{\partial M}} 
\newcommand{\eB}{{\widetilde{\mathcal{H}}_{\pm}}} 
\newcommand{\ei}[1]{{\uppercase \mathcal{#1}}} 
\newcommand{\pti}[1]{{\uppercase {\mathbf{#1}}}} 
\newcommand{\st}{\,\,} 
\newcommand*{\dscale}[1][2]{%
\IfEqCase{#1}{%
{-1}{|\lambda_0|^{-\frac{1}{2}}}%
{1}{|\lambda_0|^\frac{1}{2}}%
{2}{\lambda_0}}}
\newcommand{\pscale}{\hat{\sigma}} 
\newcommand{\pnabla}{\hat{\nabla}}  
\newcommand{\bnabla}{\nabla} 
\newcommand{\pg}{\hat{g}} 
\newcommand{\bP}{P} 
\newcommand{\bdh}{\bar{h}} 
\newcommand{\n}{{\bar{n}}}
\newcommand{\pgctform}{\hat{\upsilon}} 
\newcommand{\mJ}{\hat{J}} 
\newcommand{\mf}{\hat{f}} 
\newcommand{\mvphi}{\hat{\varphi}} 
\newcommand{\mL}{\hat{L}} 
\newcommand{\Ti}{T_{\mathrm{i}}}
\newcommand{\Spi}{S_{\mathrm{pi}}}
\newcommand{\scri}{\mathscr{I}}
\title{Projective and Carrollian geometry at time/space-like infinity on projectively compact Ricci flat Einstein manifolds.}
\author[1]{Jack Borthwick\thanks{\href{mailto:jack.borthwick@mcgill.ca}{jack.borthwick@mcgill.ca}}}
\author[2]{Yannick Herfray\thanks{\href{mailto:yannick.herfray@univ-tours.fr}{yannick.herfray@univ-tours.fr}}}
\affil[1]{Department of Mathematics and Statistics, McGill University}
\affil[2]{Institut Denis Poisson, Université de Tours}
\date{\today}
\begin{document}
\maketitle
{\let\thefootnote\relax\footnotetext{Keywords: projective geometry, tractor calculus, projective compactification}}
\begin{abstract}
In this article we discuss how to construct canonical \emph{strong} Carrollian geometries at time/space like infinity of projectively compact Ricci flat Einstein manifolds $(M,g)$ and discuss the links between the underlying projective structure of the metric $g$. The obtained Carrollian geometries are determined by the data of the projective compactification.  The key idea to achieve this is to consider a new type of Cartan geometry based on a non-effective homogeneous model for projective geometry. We prove that this structure is a general feature of projectively compact Ricci flat Einstein manifolds. 
\end{abstract}
\tableofcontents
\section{Introduction}
A geometric structure on a manifold with boundary $M$ can induce distinct geometries on the interior $\Mint$ and on the boundary $\B$. An important example of this phenomenon is when $M$ is equipped with a Cartan geometry admitting a holonomy reduction. As proved by \v{C}ap--Gover--Hammerl in \cite{cap_holonomy_2014} this indeed leads to a curved orbit decomposition of $\M$, that one can take to be compatible with the canonical decomposition $\M=\Mint\cup \B$. This provides a framework in which one can explore, from a geometric perspective, different compactification procedures which appear in modern day physics. These are of interest since such compactifications typically underlie holographic duality principles.

Holonomy reductions of projective structures~\cite{Cap:2014aa,Cap:2014ab, cap_projective_2016-1,Flood:2018aa}  provide particularly rich orbit decompositions in which there is an interesting interplay between conformal, projective and pseudo-riemannian geometry. This is nicely illustrated by the diagram in Figure~\ref{fig:CProjMinkowski} which illustrates the \emph{projective} compactification of Minkowski spacetime $(\mathbb{M}^{d+1}, \eta=-\dd t^2+\textrm{Euc}_{\R^d})$, as obtained from a curved orbit decomposition of the projective sphere (which generalises to an appropriate curved setting~\cite{Flood:2018aa}). \v{C}ap and Gover have shown (see~\cite[Proposition 2.4]{Cap:2014ab}) that boundary points of this compactification are, in a precise sense, endpoints of the geodesics of $\mathbb{M}^{d+1}$. The decomposition of the boundary into three orbits then corresponds to the three possible types of geodesic curves reaching it, which are distinguished by the \enquote{norm} $\eta(u,u)$ of their tangent vector: timelike $\eta(u,u)<0$, null $\eta(u,u)=0$ and spacelike $\eta(u,u)>0$ curves.  Accordingly, the three boundary orbits correspond to ``timelike'', ``null'' and ``spacelike" infinity respectively.
\begin{figure}[h!]
\begin{subfigure}{.5\textwidth}
\centering
\includegraphics[scale=.5]{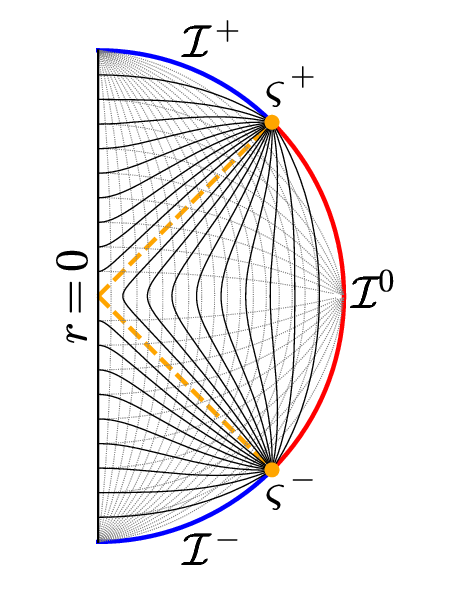}
\subcaption{\label{fig:CProjMinkowski}Projective}
\end{subfigure}%
\begin{subfigure}{.5\textwidth}
\centering
\begin{tikzpicture}[scale=2,thick]
\draw (0,-1) -- node[rotate=90,above]{$r=0$} (0,1) ;
\draw[orange] (0,1) --node[black, above right]{$\scri^+$} (1,0);
\draw[orange] (0,-1) --node[black,below right]{$\scri^-$} (1,0);
\fill[blue] (0,1)  circle(.3mm) node[black,above]{$\iota_+$};
\fill[red] (1,0)  circle(.3mm) node[black,right]{$\iota_0$};
\fill[blue] (0,-1) circle(.3mm) node[black,below]{$\iota_-$};
  \tikzset{declare function={
        T(\t,\r)  = \fpeval{1/pi*(atan(\t+\r) + atan(\t-\r))};
        R(\t,\r)  = \fpeval{1/pi*(atan(\t+\r) - atan(\t-\r))};
    }}
    
     \def\Nlines{3} 
    \newcommand\samp[1]{ tan(90*#1) } 
    \foreach \i [evaluate={\t=\i/(\Nlines+1);}] in {-\Nlines,...,\Nlines}{
        \message{Drawing i=\i...^^J}
        \draw[line width=0.3,samples=30,smooth,variable=\r,domain=0.001:1]
        plot({ R(\samp{\t},\samp{\r}) }, { T(\samp{\t},\samp{\r}) });
    }
    \def\Nlines{6}
  \foreach \i [evaluate={\r=\i/(\Nlines+1);}] in {1,...,\Nlines}{
        \message{Drawing i=\i...^^J}
        \draw[line width=0.3,samples=30,smooth,variable=\t,domain=-0.999:0.999]
        plot({ R(\samp{\t},\samp{\r}) }, { T(\samp{\t},\samp{\r}) });
    }
\end{tikzpicture}
\subcaption{Conformal\label{fig:CConfMinkowski}}
\end{subfigure}
\caption{The projective and conformal compactifications of $\mathbb{M}^{d+1}$.}

\hfill\begin{minipage}{\dimexpr\textwidth-1cm}
 On the left (a), the projective compactification: $\mathcal{I}^{\pm} \simeq \mathbb{H}^d$ denotes future/past projective infinity, isometric to hyperbolic space. $\mathcal{I}^0 \simeq dS^d$ denotes projective spatial infinity, isometric to de Sitter space. Projective null infinities $\varsigma^{\pm}\simeq S^{d-1}$ are (Riemannian) conformal spheres. Altogether the boundary is topologically $S^d$. The lightcone of the origin is represented by the dotted orange line, it divides the compactified spacetime into different regions that are foliated with the level hypersurfaces of $\rho=\frac{1}{\sqrt{\lvert{-t^2+x^2}\rvert}}$. The faint dotted horizontal (resp. vertical) curves are the $t=\textrm{cst}$ (resp. $r=\textrm{cst}$) slices.\\
On the right (b), the conformal compactification: the inside is foliated horizontally (resp. vertically) by the $t=\textrm{cst}$ (resp. $r=\textrm{cst}$) hypersurfaces.  $\mathscr{I}^{\pm} \simeq S^{d-1} \times \mathbb{R}$ denotes future/past conformal null infinity while conformal future/past $\iota^{\pm}$ and spatial $\iota^0$ infinities are reduced to points. The total boundary is topologically $S^1 \times S^{d-1}$.
\end{minipage}~\hspace{0.5cm}
\end{figure}

The \emph{projective} picture outlined above is markedly different from that of the widely known \emph{conformal} compactification of Penrose \cite{penrose_asymptotic_1963}. Conformal compactification has proven to be a valuable conceptual tool that formalised the notion of isolated system in general relativity \cite{geroch_asymptotic_1977,ashtekar_geometry_2015} and in general helped to phrase and attack global problems in mathematical relativity \cite{valiente_kroon_conformal_2016,Friedrich_2018}. The conformal boundary of asymptotically flat spacetime is null, with topology  $\R \times S^2$, and usually referred to as (conformal) null infinity or $\scri$. This, however, differs from projective null infinity, that is, the part of the projective boundary composed of endpoints to null geodesics and which we denoted by $\varsigma^{\pm}$ in Figure \ref{fig:CProjMinkowski}, which is topologically a sphere $S^2$. One can roughly say that the projective construction at null infinity is missing a $1$-dimensional fibre.

On the other hand, in the timelike and spacelike regions (reduced to points in the conformal picture, see Figure~\ref{fig:CConfMinkowski}), the projective boundary is richer than that of the conformal compactification. This feature seems to suggest that one might hope to be able to capture information about their asymptotics and, indeed, there is a rich literature on asymptotic flatness at spatial infinity~\cite{ashtekar_unified_1978,beig_einsteins_1982,Chrusciel:1989ye,ashtekar_spatial_1992,Friedrich:1998-83,Friedrich:1999wk,Kroon:2003ix,Kroon:2004pu,Kroon:2004me,compere_relaxing_2011,Prabhu:2019fsp,AliMohamed:2021nuc,Prabhu:2021cgk,Capone:2022gme,compere_asymptotic_2023,Ashtekar:2024aa,Ashtekar:2023zul,Mohamed:2023jwv}, which uses assumptions closely related to projective compactness \cite{Cap:2014ab}.

However, there is not enough room at $\mathcal{I}^\pm, \mathcal{I}^0$ to encode, for example, asymptotic symmetries in a way which is intrinsic to the boundary and, accordingly, any form of gravitational data in a geometrical manner. In fact, it was already suggested in the seminal work of Ashtekar--Hansen~\cite{ashtekar_unified_1978} to circumvent this problem by introducing a line bundle $\widetilde{\mathcal{I}}^{\epsilon} \simeq \mathcal{I}^{\epsilon} \times \mathbb{R}$ over the projective boundaries $\mathcal{I}^0 \simeq dS^3$, $\mathcal{I}^{\pm} \simeq H^3$ - thus imitating the model of conformal null infinity $\scri^{\pm} \simeq \varsigma^{\pm} \times \mathbb{R}$, thought of as a line bundle over projective null infinity $\varsigma^{\pm} \simeq S^2$. This idea resurfaced in the work of Figueroa-O'Farril and collaborators \cite{Figueroa-OFarrill:2021sxz} where such extensions were naturally constructed as homogeneous spaces. 

The essential feature of the homogeneous model studied by Figueroa-O'Farril, Have, Prohazka and Salzer in \cite{Figueroa-OFarrill:2021sxz} is the following: the action of the Poincaré group $ISO(n-1,1)$ on $\mathbb{R}\text{P}^{n+1}$ fixes a point. Once this point is removed, the decomposition of $\mathbb{R}\text{P}^{n+1}\setminus \{pt\}$ into orbits, given by the top line of \eqref{Introduction: Homogeneous model}, \emph{fibres over the projective compactification}, given by the bottom line of \eqref{Introduction: Homogeneous model}, of $n$-dimensional Minkowski space $\mathbb{M}^n$.
	\begin{equation}\label{Introduction: Homogeneous model}
	\begin{array}{cccccc}
		\mathbb{R}\text{P}^{n+1}\setminus \{pt\} &\simeq& \left( \mathbb{R} \times \mathbb{M}^n\right) &\sqcup&  \left(\textrm{Ti}^{n} \sqcup \mathscr{I}^{n-1} \sqcup  \textrm{Spi}^{n}\right)\\[1em]
		\big\downarrow && \big\downarrow && \big\downarrow\\[1em]
		\mathbb{R}\text{P}^{n}  &\simeq& \mathbb{M}^n &\sqcup&  \left( H^{n-1} \sqcup  S^{n-2} \sqcup  dS^{n-1}\right).
	\end{array}
\end{equation}
Crucially, the action of $ISO(n-1,1)$ on the homogeneous spaces $\textrm{Ti}$, $\textrm{Spi}$ and $\scri$ is non-trivial along the 1-dimensional fibres and this suggests that $\textrm{Ti}$ and $\textrm{Spi}$ should be thought of as \enquote{extended} boundaries.

In previous work~\cite{RSTA20230042}, we studied in detail the curved picture corresponding to this homogeneous model. More precisely, we investigated the curved orbit decomposition of compact $(n+1)$-manifolds $\widetilde{M}$ equipped with a projective structure $[\nabla]$ whose corresponding normal Cartan connection~\cite{Cartan:1924aa,Sharpe:1997aa,Kobayashi:1995aa} admits a holonomy reduction~\cite{cap_holonomy_2014} to a specific Poincaré group described by the existence of two parallel tractors $I^A, H^{AB}$. 
The resulting picture is that $\widetilde{M}$ is naturally endowed with a distinguished $1$-dimensional foliation given by \enquote{integral curves}\footnote{Since we have a densified vector field, we point out that the parametrisation of these curves is undetermined.} of a distinguished densified vector field $n^a$.  Assuming, after excluding a discrete number of singular points $\mathcal{Z}(n^a)$, that the leaf space $\Q$ of the foliation was a manifold, we found that it inherited a projective structure whose normal Cartan connection also admits a holonomy reduction such that the curved orbit decomposition of $\widetilde{M}\setminus \mathcal{Z}(n^a)$ fibers over that of $\Q$ according to the following diagram:
\begin{center}
\begin{tikzcd}[column sep=2mm]
\widetilde{M}\setminus \mathcal{Z}(n^a)\arrow[d] &=&\mathcal{O}\arrow[d] &\sqcup& \overbrace{ \left(\widetilde{\mathcal{H}}_- \sqcup \widetilde{\mathcal{H}}_0 \sqcup \widetilde{\mathcal{H}}_+\right)}^{\partial\widetilde{M}}\arrow[d]\\
\Q  &\simeq& \Qint &\sqcup&  \underbrace{\left(  \mathcal{H}_- \sqcup  \mathcal{H}_0 \sqcup   \mathcal{H}_+\right)}_{\B}
\end{tikzcd}.
\end{center}
This directly generalises\footnote{Note that here $\mathcal{H_+}$ stands for spatial infinity and $\mathcal{H_-}$ for timelike infinity. Thus both $\mathcal{H}_-$ and $\mathcal{H}_0$ are possibly disconnected with past and future components.} the homogeneous model \eqref{Introduction: Homogeneous model}: $\Qint$ splits into two parts $\Qint_{\pm}$ and $\Qint_\pm\cup \B$ is a projective compactification~\cite{Cap:2014ab} of order $1$ of an Einstein metric inherited on $\Qint_{\pm}$. This model therefore provides a geometric situation in which we naturally have a line bundle over the projective compactification, directly generalising \cite{Figueroa-OFarrill:2021sxz}.

 Moreover, as we will recall in Definition~\ref{DefinitionExtendedBoundary}, the line bundles $\widetilde{\mathcal{H}}_\pm \rightarrow \mathcal{H}_\pm$ can in fact be defined in terms of intrinsic data on $\mathcal{H}_\pm$, i.e. only in terms of the data of the projective compactification and without referring to the higher dimensional manifold. What is more, asymptotic symmetries are then naturally identified with automorphisms of $\mathcal{H}_\pm$. Consequently, it seems justified to refer to the total space $\widetilde{\mathcal{H}}_{\pm}$ of the line bundle as the \emph{extended (projective) boundary} of $M$. 

Nevertheless, at this stage, it was unclear if there was any other geometric structure induced by the higher dimensional manifold $\widetilde{M}$ that could in fact be intrinsic to the projectively compact Ricci flat manifolds $M$. The main goal of this article is to provide a \emph{positive} answer to this question\footnote{
	In this work, we are interested in the space/time like parts of the boundary and will work away from the null part, $\mathcal{H}_0$, which will be the object of future work.}: 

\begin{theo*} Given a projectively compact Ricci flat metric $\hat{g}$ on a manifold with boundary $M=\Mint \cup \partial M$, then there is a natural way to equip the extended boundary $\widetilde{\mathcal{H}}^{\pm}  \rightarrow \mathcal{H}^\pm$ with a projective structure that admits a holonomy reduction to strong \emph{Carrollian} geometries. \end{theo*}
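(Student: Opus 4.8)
The plan is to prove the statement working entirely on the boundary, without appealing to the higher-dimensional manifold $\widetilde{M}$ of \cite{RSTA20230042}: I would upgrade the projective data that $\mathcal{H}^\pm$ inherits from the projective compactification to a Cartan geometry on the total space of the line bundle $\widetilde{\mathcal{H}}^\pm$, modeled on the non-effective homogeneous model for projective geometry, and then extract the Carrollian reduction from Ricci-flatness. Concretely, I would first record the tractor data on $\M$ supplied by the hypotheses: projective compactness of order $1$ together with the Ricci-flat Einstein condition equips the boundary-extended standard projective tractor bundle $\T$ of $(\M,[\hat\nabla])$ with a parallel tractor $I^A$ and a parallel symmetric tractor $H^{AB}$ subject to the compatibility relations of \cite{RSTA20230042} --- this is precisely the holonomy reduction carried by the leaf space $\Q = \M$ there. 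Restricting $\T$, $I^A$, $H^{AB}$ and the normal tractor connection of $\M$ to $\mathcal{H}^\pm = \B$ --- where, crucially, all induced objects remain non-degenerate because we stay away from the null orbit $\mathcal{H}_0$ --- the hypersurface tractor calculus for projectively compact metrics yields a genuine projective structure $[\nabla^{\mathcal{H}}]$ on $\mathcal{H}^\pm$ together with its normal Cartan connection; in the model \eqref{Introduction: Homogeneous model} this returns the Einstein structures on $H^{n-1}$, resp.\ $dS^{n-1}$.

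I would then build the non-effective projective geometry on $\widetilde{\mathcal{H}}^\pm$. The line bundle $\widetilde{\mathcal{H}}^\pm \to \mathcal{H}^\pm$ is defined intrinsically (Definition~\ref{DefinitionExtendedBoundary}), and the parallel co-tractor / defining-density data furnish it with a canonical principal connection $\theta$ for its one-dimensional structure group. Let $(\widetilde G,\widetilde P)$ be the non-effective model --- with $\widetilde G/\widetilde P$ a line bundle over the projective space $\mathbb{R}\mathrm{P}^{n-1}$, of dimension $n = \dim\widetilde{\mathcal{H}}^\pm$, and $\widetilde G$ an enlargement of the projective structure group of $\mathcal{H}^\pm$ acting non-effectively on that base. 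I would prove that the pullback to $\widetilde{\mathcal{H}}^\pm$ of the normal projective Cartan connection of $(\mathcal{H}^\pm,[\nabla^{\mathcal{H}}])$, glued with $\theta$, defines a Cartan connection $\widetilde\omega$ of type $(\widetilde G,\widetilde P)$: one checks that $\widetilde\omega$ reproduces the fundamental vector fields of the enlarged structure group (the fibre direction of $\widetilde{\mathcal{H}}^\pm$ being matched with the extra generator of $\widetilde{\mathfrak g}$), that it is a linear isomorphism at each point, and that it is $\widetilde P$-equivariant, normality being fixed by the usual codifferential normalisation and inherited from the constituents. This $\widetilde\omega$ is the advertised projective structure on $\widetilde{\mathcal{H}}^\pm$; it coincides with the Cartan geometry that $\widetilde{\mathcal{H}}_\pm$ acquires as a curved orbit in \cite{RSTA20230042} whenever an ambient $\widetilde M$ is available.

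Finally I would produce the holonomy reduction. Restricting $I^A$ and $H^{AB}$ to $\mathcal{H}^\pm$, pulling them back to $\widetilde{\mathcal{H}}^\pm$ and correcting them by the slot data of $\theta$, I expect to obtain $\widetilde\omega$-parallel sections of the tractor bundles associated with $\widetilde\omega$; their joint stabiliser in $\widetilde G$ is (an enlargement of) the relevant Carroll group, so by the holonomy-reduction formalism of \cite{cap_holonomy_2014} this is a holonomy reduction of $\widetilde\omega$ to that subgroup, with $\widetilde{\mathcal{H}}^\pm$ a single open curved orbit. Transporting the invariant data of the reduced group on the model $\widetilde G/\widetilde P$ then equips $\widetilde{\mathcal{H}}^\pm$ with a rank-$(n{-}1)$ degenerate metric, a nowhere-vanishing vector field spanning its radical --- the fibre direction --- and a compatible torsion-free affine connection, i.e.\ a \emph{strong} Carrollian geometry in the sense of the statement, directly generalising the homogeneous Carrollian spaces $\Ti$, $\Spi$ of \cite{Figueroa-OFarrill:2021sxz}. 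Naturality of the whole construction is then immediate, since nothing beyond $\hat g$, its projective compactification and the intrinsic bundle $\widetilde{\mathcal{H}}^\pm$ has been used.

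I expect the difficulties to be concentrated in two places. First, verifying that the boundary projective Cartan connection and the line-bundle connection $\theta$ genuinely amalgamate into a \emph{normal} Cartan connection of the non-effective type --- and that the resulting $\widetilde\omega$ does not depend on the auxiliary choices implicit in the hypersurface calculus --- will require careful bookkeeping of the various tractor splittings and of how the extra fibre direction sits relative to the parabolic filtration. Second, and more essentially, one must show that the reduction is to a \emph{strong} Carrollian structure, i.e.\ that it singles out a preferred torsion-free compatible connection and not merely the weak pair (degenerate metric, radical line field); this is the step where Ricci-flatness --- through the parallel tractor $H^{AB}$ --- is used in an essential way, over and above mere projective compactness, and I would expect it to absorb the bulk of the technical work.
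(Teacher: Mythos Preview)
Your route is genuinely different from the paper's, and it has a real gap.

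The paper does \emph{not} build the Cartan geometry on $\widetilde{\mathcal{H}}^\pm$ by amalgamating the boundary projective Cartan connection of $\mathcal{H}^\pm$ with a line-bundle connection. Instead it constructs a non-effective Cartan geometry on the \emph{whole} $n$-manifold $M$ modelled on \eqref{Introduction: non effective projective model} (so $G/P\simeq\R P^n$, not a line bundle over $\R P^{n-1}$), shows that the Poincar\'e holonomy reduction pins this down uniquely on $\Mint$ in terms of $\hat g$ (Theorem~\ref{Theorem: UniquenessOnInside}), and then does the hard work of proving that this structure extends smoothly to $\mathcal{H}^\pm$ (Sections~\ref{ssec:AbstractDiscussionProjectiveCompactification}--\ref{ssec:existence}, culminating in Theorem~\ref{Thm:ExtensionToBoundary}). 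Only then does it restrict to $\mathcal{H}^\pm$ and lift to an effective Carrollian geometry on $\eB$ (Theorem~\ref{Theorem: Cartan geometry on Ti/Spi}), finally identifying this as a holonomy reduction of a projective structure (Proposition~\ref{Prop:ProjectiveStructureHolonomyReduction}).

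The gap in your plan is the ``canonical principal connection $\theta$'' and, more seriously, the belief that the boundary projective Cartan connection of $\mathcal{H}^\pm$ together with such a $\theta$ suffices. Look at the induced connection \eqref{Induced effective Cartan geometry}: its crucial ingredient is the symmetric tensor $\upsilon_{ab}=\iota^*\big(\pscale^{-1}(\nu^{-1}P_{ab}-q_{ab})\big)$, a first-order \emph{transverse} jet of the interior metric at the boundary. This is not determined by the intrinsic projective structure of $\mathcal{H}^\pm$, nor by any connection on the line bundle that the defining density and $I_A$ alone could supply; it is precisely where Ricci-flatness enters, via the identities \eqref{Projective compactification review: identities on schouten}, to guarantee that the formally singular expression in Theorem~\ref{Theorem: connection coefficient in constructed gauge} has a finite limit. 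Moreover there is no single canonical $\theta$: as Theorem~\ref{Change of upsilon} shows, $\upsilon_{ab}$ (and hence the connection) depends on the chosen section $\hat\tau:\mathcal{H}^\pm\to\eB$, and it is exactly this transformation law that lets one assemble an effective, section-independent Cartan connection \emph{on the total space} $\eB$. Your proposal, as written, has no mechanism to produce $\upsilon_{ab}$ and therefore cannot yield the \emph{strong} Carrollian connection --- at best it would recover the weak Carrollian pair $(h_{ab},n^a)$ already visible from the Einstein metric on $\mathcal{H}^\pm$.
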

 This is the content of Theorem~\ref{Theorem: Cartan geometry on Ti/Spi} and Proposition~\ref{Prop:ProjectiveStructureHolonomyReduction}. 
Here, the term \emph{Carrollian geometries} refers to the degenerate geometries introduced in \cite{duval_carroll_2014}, see \cite{Herfray:2021qmp} for details on some aspects of the related Cartan geometries.\\

The strategy towards obtaining this result is to again consider a curved orbit decomposition, closely related to the homogeneous model \eqref{Introduction: Homogeneous model}, but instead coming from a holonomy reduction of a \emph{non-effective} Cartan geometry on a $n$-dimensional manifold $M$ based on non-effective realisation of projective space\footnote{Here $\textrm{Aff}(n)= SL(n) \ltimes \mathbb{R}^{n}$ is the affine group, so that $\R P^n \simeq \frac{(G/K)}{(P/K)}\approx \frac{SL(n+1)}{\textrm{Aff(n)}}$ is the usual, effective, realisation of projective space.}
\begin{equation}\label{Introduction: non effective projective model}
	\R \textrm{P}^n \simeq G/P \approx \frac{SL(n+1)\ltimes \mathbb{R}^{n+1}}{\textrm{Aff}(n)\ltimes \mathbb{R}^{n+1}}.
\end{equation}

The details of this realisation, as well as the corresponding Cartan geometry and its holonomy reduction will be discussed in Sections~\ref{ssec:NoneffectiveModel} and \ref{ssec:metric-holonomy-reduction}.

 One can always recover an effective Cartan geometry from a non-effective one by quotienting by the kernel $K$ (in the case at stake $K\approx\R^{n+1}$). In general, trying to use a non-effective geometry, such as \eqref{Introduction: non effective projective model}, to study the underlying effective geometry is nevertheless likely to be a futile task, as it is not clear that the extra degrees of freedom in the kernel will carry any meaningful intrinsic information. However, in the case at hand we will prove that the holonomy reduction will completely fix the non-effective geometry in terms of the effective one.

More precisely, we will prove (see Theorem~\ref{Theorem: UniquenessOnInside}) that, in the interior $\Mint$, the non-effective Cartan geometry, modelled on \eqref{Introduction: non effective projective model}, is completely determined by the projectively compact metric metric $\pg$. We then prove that it extends to points of the usual projective boundary $\mathcal{H}^{\pm}$, see Theorem~\ref{Thm:ExtensionToBoundary}. Finally, we prove that this non-effective Cartan geometry, canonically induced on $\mathcal{H}^{\pm}$ by the projective compactification, is equivalent to an effective Carrollian geometry on the extended boundary $\widetilde{\mathcal{H}}^{\pm}$.

The paper is organised as follows: in Section \ref{sec:Projective Tractor Calculus} we review elements of projective tractor calculus and related results for projectively compact metrics that we will need on the rest of this article. We then discuss in Section \ref{sec:HolonomyReductionOfNonEffective} the curved orbit decomposition induced by the holonomy reduction of a non-effective Cartan geometry modelled on \eqref{Introduction: non effective projective model}. Finally, in Section \ref{sec:Projective structure on the extended boundary}, we derive the geometry induced on the extended boundary $\widetilde{H}^{\pm}$ from this construction.

\section*{Notations}
We will make extensive use of the abstract index notation. In this context, we will denote the sheaf of vector fields, $\E^a$, the sheaf of one forms, $\E_a$, etc.
To simplify notations, when the context is clear, we will abuse notation and not specify over which open set we are working, or, only specify the manifold, e.g. $\E^a_{\M}$.

The bundle of projective densities of weight $w$ (defined below) will be written $\E(w)$. Once more, for readability, we will abuse notation and not distinguish between $\E(w)$ and the modules of local sections; which interpretation is appropriate should always be clear from the context. In cases where confusion could arise, will use the notation $\Gamma$ to indicate sections. 
\section*{Acknowledgements}
JB and YH are happy to thank Andreas \v{C}ap for discussions related to the content of this article. JB gratefully acknowledges that this research is supported by NSERC Discovery Grant 105490-2018. Finally, the authors are thankful to the Erwin Schrodinger Institute for welcoming them during the workshop  ``Carrollian Physics and Holography''(2024) where part of this manuscript was written.

\section{Brief review of the standard projective tractor calculus}\label{sec:Projective Tractor Calculus}
For the convenience of the reader we review elements of the usual projective tractor calculus, developed in~\cite{Bailey:1994aa}.
We then recall from \cite{Cap:2014aa,Cap:2014ab,cap_projective_2016-1} Čap-Gover's definition for projectively compact Ricci flat manifold and some associated results.

\subsection{Basic definitions}\label{BasicDefinitions}
Let $\M$ be a manifold\footnote{With or without boundary, we refer the reader, for instance, to~\cite{Borthwick:2023aa} for a detailed discussion about defining projective structures on manifolds with boundary.} of dimension $n$ endowed with an equivalence class of projectively equivalent torsion-free connections $[\pnabla]$. It is a standard result\footnote{See, for instance,~\cite[Proposition 7.2]{Kobayashi:1995aa}} that two torsion-free affine connections $\tilde\nabla$ and $\nabla$ are projectively equivalent if and only if there is a 1-form $\Upsilon_a$ such that for any vector field $\xi^b$:
\begin{equation}\label{ProjectiveChangeOfConnectionVectors}\tilde\nabla_a\xi^b=\nabla_a\xi^b + \Upsilon_a\xi^b +\Upsilon_c\xi^c\delta_a^b. \end{equation}
We will use $\tilde\nabla=\nabla + \Upsilon$ as shorthand for this condition.

We denote by $\mathcal{E}(w)$ the associated vector bundle to the frame bundle $P^1(\M)$ determined by the $1$-dimensional representation of $GL_{n}(\R)$: $A \mapsto \lvert \det {A}\rvert ^{\frac{w}{n+1}}.$
Sections of this bundle will be called \emph{projective densities} of weight $w$. Their definition guarantees that under the projective change of connection $\bnabla = \pnabla + \Upsilon$ we have, for any section $\sigma \in \mathcal{E}(w)$, \[\bnabla\sigma = \pnabla \sigma + w\Upsilon \sigma. \]
For any vector bundle $\mathcal{B}$ with base $\M$, we will write: $\mathcal{B}(w)=\mathcal{B}\otimes \mathcal{E}(w)$ and we will say that sections of $\mathcal{B}(w)$ are of weight $w$.

Let $G=\textnormal{PGL}(n,\R)$ and $H=(\R^{n})^*\rtimes GL(n,\R)$ the isotropy subgroup of a given line. The class $[\nabla]$ determines a reduction of the second-order frame bundle $P^2(M)$ to a $H$-principle bundle $P$, on which there is a unique \emph{normal} Cartan connection $\omega$~\cite{Cartan:1924aa, Kobayashi:1995aa}; this is a normal Cartan geometry modelled on projective geometry as described in~\cite{Sharpe:1997aa}.

The \emph{standard dual (projective) tractor bundle}, is defined~\cite{Bailey:1994aa} to be the $1$-jet prolongation of $\E(1)$:
\[\T^*=J^1\E(1).\]
In the abstract index notation sections of tractor bundles will be indicated by indices $A,B,C\dots$.

$\T^*$ fits into a canonical short exact sequence of vector bundles:
\begin{center}
\begin{tikzcd} 0 \arrow[r] & T^*\M(1) \arrow[r,"Z"] & \arrow[l,dotted,bend left = 45, "W"] \mathcal{T}^* \arrow[r,"X"] & \E(1) \arrow[r] \arrow[l,dotted,bend left=45,"Y"] &0. \end{tikzcd}
\end{center}
A choice of connection in the projective class $[\nabla]$ provides a non-canonical isomorphism $\mathcal{T}^*\simeq T\M(1)\oplus \E(1)$, described above by the non-canonical maps $Y$ and $W$. When thinking of these maps as sections of $\mathcal{T}^*\otimes\mathcal{E}(-1)$ and $T^*\M(1)\otimes\mathcal{T}$ respectively, they transform under change of connection $\tilde{\nabla}=\nabla + \Upsilon$ according to:
\begin{align}\label{ChangeOfProjectiveSplitting}\tilde{Y}_A&=Y_A - \Upsilon_aZ^a_A, & \tilde{W}^A_a&=W^A_a + \Upsilon_a X^A. \end{align}
Fixing a choice of connection $\nabla$ in the projective class, sections of $\T^*$ will be written:
\[ T_A= \sigma Y_A+\mu_aZ^a_A ,\]
similar expressions will be used with sections of tensor powers of the tractor bundle. 

We also recall the standard decomposition of the curvature tensor $R_{ab\phantom{c}d}^{\phantom{ab}c}$:
\[ R_{ab\phantom{c}d}^{\phantom{ab}c}=W_{ab\phantom{c}d}^{\phantom{ab}c} + 2\delta^c_{[a}P_{b]d} + \beta_{ab}\delta^c_d,\]
where the Weyl tensor $W$ is trace-free, $P_{ab}$ is the projective Schouten tensor and  $\beta_{ab}$ can be thought of as the curvature tensor of the density bundle $\mathcal{E}(1)$. They are given by
\begin{align*}
	(n-1)P_{ab}&= R_{ab}+\beta_{ab},& \beta_{ab}&=-\frac{2}{n+1}R_{[ab]}=-P_{[ab]},
\end{align*}
where $R_{ab}=R_{da\phantom{d}b}^{\phantom{da}d}$ is the Ricci tensor. We note that the tensor quantities $P$ and $\beta$ are not projectively invariant. In particular, under a projective change of connection $\tilde{\nabla}=\nabla +\Upsilon$:
\begin{equation}\label{SchoutenTensorTransformationRule}  \tilde{P}_{ab}=P_{ab} -\nabla_a \Upsilon_b + \Upsilon_a \Upsilon_b. \end{equation}

A connection $\nabla$ is said to be \emph{special} if it preserves a nowhere vanishing density $\sigma$. In this case $\beta_{ab}=0$ (in particular all density bundles are flat) and we say that $\nabla$ is the \emph{scale} determined by $\sigma$. Even when it does not preserve a nowhere vanishing density, we will refer to a choice of connection in the class as a choice of scale.

The normal Cartan connection $\omega$ induces a linear connection on $\T^*$. Having fixed a projective connection $\nabla$ in the class, and split the short-exact sequence, the action of this connection can be summarised conveniently by the relations:
\begin{equation}\label{TractorConnection} \nabla_a X^A= W^A_a, \quad \nabla_a W^B_b=-P_{ab}X^B,\quad \nabla_a Y_A=P_{ab}Z^b_A, \quad \nabla_a Z^b_B=-\delta_{a}^bY_B. \end{equation}

The tractor curvature, defined by the identity $\Omega_{ab\phantom{C}D}^{\phantom{ab}C}T^D= 2 \nabla_{[a}\nabla_{b]}T^C$, is known to be given in an arbitrary scale by:
\begin{equation}\label{TractorCurvature} \Omega_{ab\phantom{C}D}^{\phantom{ab}C}=\Weyl{a}{b}{c}{d}W^C_cZ^d_D -Y_{abd}X_CZ^d_D \end{equation}
where: $Y_{abd}=2\nabla_{[a}P_{b]d}$ is the projective Cotton tensor.

For any weighted tractor bundle, $T^\mathscr{A}(w)$, where $\mathscr{A}$ is an arbitrary set of abstract indices, the normal Cartan connection induces a natural projectively invariant differential operator:
\[ D: T^\mathscr{A}(w) \rightarrow \T_A\otimes T^\mathscr{A}(w-1)\]
known as the Thomas $D$-operator and defined by:
\begin{equation}\label{Definition: Thomas operator}
D_A t^\mathscr{A} = w\,t^\mathscr{A}\,Y_A + \nabla_a t^\mathscr{A}\,Z^a_A.
\end{equation} 
$D$ behaves much like a covariant derivative with a cotractor index, in particular, \emph{it satisfies the Leibniz rule}; this can be useful in computations. Also observe that one has
\begin{equation}\label{DderivativeX} D_AX^B = \delta_{\st A}^{B}. \end{equation}

\subsection{Metric projective structures}
A projective class $[\nabla]$ on an $n$-manifold $\M$ is said to be \emph{metric} if it admits a solution $\zeta^{ab} \in \E^{(ab)}(-2)$ 
to the Mike\v{s}-Sinjukov metrisability equation~\cite{Mikes1996,Sinjukov:1979aa}:
\begin{equation}\label{Metrisability} 
	\nabla_c\zeta^{ab} - \frac{2}{n+1}\delta^{(a}_c\nabla_d\zeta^{b)d}=0. 
\end{equation}
If we introduce\footnote{$\epsilon^2$ is the section that realises the identification $(\Lambda^{n}T\M)^{\otimes 2} \simeq \mathcal{E}(2(n+1))$; if $\M$ is oriented, then it can be thought of as the square of the volume form.} 
	\begin{equation*}
	\detd : \begin{array}{ccc}
	\mathcal{E}^{ab}(w) & \longrightarrow &\mathcal{E}[n(w+2) + 2 ] \\
	h^{ab} &\longmapsto& \frac{1}{n!}\epsilon^2_{a_1 ... a_{n}b_1... b_{n}} h^{a_1b_1}... h^{a_{n}b_{n}}
	\end{array}
	\end{equation*}then
solutions of Eq.~\eqref{Metrisability} yield inverse metrics $g^{ab}=\lvert\detd(\zeta)\rvert \zeta^{ab}$ on $\{\pm\detd(\zeta)>0\}$ whose Levi-Civita connection $\nabla^g$ belongs to the projective class $[\nabla]$. $\nabla^g$ is equivalently characterised as the special connection that preserves the density $\detd(\zeta)$.

Solutions of the metrisability equation been shown~\cite{Eastwood:2008aa,Flood:2018aa} to be in one-to-one correspondence with tractors $H^{AB}$ that satisfy:
\begin{equation} \label{TractorMetrisability}
	\nabla_c H^{AB} + \frac{2}{n}X^{(A}W_{cE\phantom{B}F}^{\phantom{cE}B)}H^{EF}=0,
\end{equation}
where $W_{cE\phantom{B}F}^{\phantom{cE}B}=Z^c_C\Omega_{ce\phantom{B}F}^{\phantom{ce}B}$ and $\Omega_{ce\phantom{B}F}^{\phantom{ce}B}$ is the tractor curvature.
In this case $H^{AB}$ is of the form\footnote{In what follows we will have $\lambda^a = - \pscale N^a$.}
\begin{equation} \label{ProjectiveTractorMetric}H^{AB}= \zeta^{ab}\,W^A_aW^B_b  +\lambda^b\,X^{(A}W^{B)}_b+ \nu \,X^AX^B,\end{equation}
with 
\begin{align}\label{H decomposition}
\lambda^a &= -\frac{2}{n+1}\nabla_c\zeta^{ca},& \nu&=  \frac{\nabla_a\nabla_b \zeta^{ab}}{n(n+1)}+\frac{1}{n}P_{ab}\zeta^{ab}.
\end{align}

If $\nabla_c H^{AB}=0$, then it can be shown that Eq.~\eqref{TractorMetrisability} is automatically satisfied; these solutions are said to be \emph{normal}. This is equivalent to
\begin{align}\label{DH=0}
\nabla_c \zeta^{ab} + 2 \delta^{(a}_c \lambda^{b)} &=0, &
\nabla_c \lambda^a + \delta^a_c \nu - P_{cb}\zeta^{ba} &=0.
\end{align}
(and implies \eqref{H decomposition}). Normal solutions are known to correspond to Einstein metrics~\cite{Cap:2014aa}. Introducing\footnote{We recall from \cite{Flood:2018aa} that the projective tractor volume form is defined by \begin{equation*} \epsilon^2_{A_0 A_1 ... A_{n}B_1 B_1... B_{n}} :=  \epsilon^2_{a_1 ... a_{n}b_1... b_{n}} Y_{[A_0} Z_{A_1}^{a_1} ...  Z_{A_{n}]}^{a_{n}} Y_{[B_0} Z_{B_1}^{b_1} ...  Z_{B_{n}]}^{b_{n}}
	\end{equation*}}
\begin{equation*}
	\det : \begin{array}{ccl}
		\T^{AB} & \longrightarrow & C^{\infty}(\M) \\
		H^{AB} &\longmapsto& \frac{(n+1)}{n!}\epsilon^2_{A_0 A_1 ... A_{n}B_0 B_1... B_{n}} H^{A_0B_0}... H^{A_{n}B_{n}}
	\end{array}
\end{equation*}
then $\lvert\detd(\zeta)\lvert\zeta^{ab} \in \mathcal{E}^{ab}$ is Einstein of scalar curvature
\begin{equation}
R = n(n-1) \sgn(\detd \zeta) \det H.
\end{equation}
In the scale associated to $\lvert\detd(\zeta)\lvert$ this can be rewritten as $R = n(n-1) \lvert\detd \zeta\rvert  \nu$.

\subsection{Projective compactification}\label{ClassicalProjectiveCompactification}
The notion of projective compactification was first introduced in the work of Čap-Gover~\cite{Cap:2014aa,Cap:2014ab,cap_projective_2016-1}. We recall that an affine connection $\pnabla$ on the interior $\Mint$ of a manifold with boundary $\M$, is said to be \emph{projectively compact of order $\alpha$} if each boundary point $x\in \B$ admits a neighbourhood $x\in U \subset M$ and a boundary defining function $\rho: U \rightarrow \R_+$, such that the projectively equivalent connection $\bnabla=\pnabla + \frac{\textrm{d}\rho}{\alpha \rho}$ on $U\cap \Mint$ admits a smooth extension to $U$. By extension, a metric $\pg$ on $\Mint$ is said to be projectively compact of order $\alpha$, if its Levi-Civita connection, $\pnabla$, is projectively compact in this sense.

In this article, our main focus is the case of a \emph{Ricci flat} projectively compact \emph{Lorentzian}\footnote{Signature $(n,1)$, for definiteness, as our discussion is essentially independent of this choice.} metric $\pg$, therefore $\alpha=1$~\cite[Theorem 3.3]{Cap:2014ab}. As discussed in~\cite{cap_holonomy_2014,Cap:2014ab,Flood:2018aa} the boundary $\B$ is then totally geodesic and inherits a projective structure which admits a holonomy reduction to the Poincaré group. This gives a curved orbit decomposition, generalising the flat model depicted in Figure \ref{fig:CProjMinkowski}:
\begin{equation}\label{BoundaryOrbitDecomposition} \B=\mathcal{H}_{\pm}\cup \mathcal{H}_0,\end{equation}
where $\mathcal{H}_{\pm}$ are closed hypersurfaces that inherit Einstein metrics which are projectively compact of order $2$: spatial infinity $\mathcal{H}_{+}$ is a Lorentzian manifold with positive scalar curvature while past/future time infinity $\mathcal{H}_{-}$ is Riemannian with negative scalar curvature. The projective boundary for these metrics coincides with projective null infinity $\mathcal{H}_0$; it is a $(n-2)$-dimensional submanifold inheriting a conformal structure.

We will review here from these references some essential results. We will make extensive use of these in the rest of the articles and this will also be useful to fix notations.\\

Define the density and densified inverse metric\footnote{It then follows from this definition that $\lvert\detd(\zeta^{ab})\lvert = \pscale^2$ and therefore the data of $\zeta^{ab}$ is strictly equivalent to that of $\pg^{ab}$.}: 
\[ \pscale=|\textrm{Vol}_{\pg}|^{\frac{1}{n+1}},\qquad  \zeta^{ab}=\pscale^{-2}\pg^{ab},\] 
where $\pscale$ is extended by $0$ to $\M$ and is a \emph{canonical boundary defining density} in the sense of~\cite[Proposition 2.3]{Cap:2014ab}, in particular:
\[ \pscale =0,\qquad  \nabla_a\pscale\neq0, \qquad \textrm{along $\B$}. \]
On the other hand, $\zeta^{ab}$ is clearly a normal solution to the metrisability equation~\eqref{Metrisability}.

From the results in~\cite{Cap:2014ab}, Ricci flatness implies that $\zeta^{ab}$ and $\pscale$ define tractors $H^{AB}$ and $I_B$ such that:
\begin{align}\label{ProjectiveTractor: H and I} \nabla_cH^{AB}&=0,& \nabla_c I_B&=0,& I_B&=D_B\pscale,& H^{AB}I_A&=0.\end{align}
Since the projective structure extends to $M$, $H^{AB}$ and $I_B$ extend to parallel tractors on $M$. 

From~\eqref{ProjectiveTractorMetric}, \eqref{Definition: Thomas operator}, the expression of $H^{AB}$ and $I_A$ in the Levi-Civita scale $\pnabla$ is readily seen to be simply: 
\begin{align}
H^{AB}&=\zeta^{ab} \hat W_a^A \hat W_b^B,  & I_A & = \pscale \hat Y_A.
\end{align}
It follows then from the transformation rules~\eqref{ChangeOfProjectiveSplitting} that, since any other scale $\bnabla$ is related to $\pnabla$ by \[\bnabla=\pnabla+\pscale^{-1}\bnabla_a\pscale,\]
we have $I_B = \pscale Y_A + \nabla_a \pscale Z_A^a$ and
\begin{align}\label{TractorMetricBoundaryScale} H^{AB}&= \zeta^{ab}W_a^AW_b^B -2\pscale^{-1}\zeta^{ab}\bnabla_b\pscale W^{(A}_{a}X^{B)}+\pscale^{-2}\zeta^{ab}\bnabla_a\pscale\bnabla_b\pscale X^AX^B.
 \end{align}
 Comparing with \eqref{ProjectiveTractorMetric} gives expressions for $\lambda^a$ and $\nu$. If $\bnabla$ extends to the boundary then the components of $H^{AB}$ must also have smooth extensions to $\B$ and therefore $\zeta^{ab}$, $\lambda^a$, $\nu$  must extend to boundary points. In particular
\[\zeta^{ab}\bnabla_a \pscale =0,\qquad  \pscale^{-1}\zeta^{ab}\bnabla_a\pscale\bnabla_b\pscale=0, \qquad \textrm{along $\partial M$}, \]
for any connection $\bnabla$ that extends to the boundary. It follows that the restriction of $\zeta^{ab}$ to $\B$ defines a canonical tensor \begin{equation}\label{Definition: boundary metric}
\bdh^{ab} \in \E^{(ab)}_{\B}(-2)
\end{equation}
which is itself a solution to the metrisability equation~\eqref{Metrisability} on $\B$, with associated tractor $\bar{H}^{AB}$. A key point is that it gives a canonical section\footnote{Due to different possible signatures, $\dscale$ is positive on $\mathcal{H}_{+}$ and negative on $\mathcal{H}_{-}$.} $$\dscale :=-\detd \bdh^{ab} \in \E(2)_{\B}$$ of the intrinsic density bundle on $\B$ that provides the orbit decomposition \eqref{BoundaryOrbitDecomposition}. In particular, it is a boundary defining density for $\mathcal{H}_0$, 
$\mathcal{H}_0 = \mathcal{Z}(\lambda_0)$, and has a fixed sign on $\mathcal{H}_{\pm}$:
\begin{align*}
 \dscale &=0 \qquad \text{on}\; \mathcal{H}_0, & \dscale &=\pm|\dscale| \qquad \text{on}\; \mathcal{H}_{\pm}.
\end{align*}

 As was proved in \cite{Cap:2014ab}, near points of $\mathcal{H}_\pm$, one can in fact always choose a scale such that both
 \begin{align}\label{Projective compactification review: N and nu def}
	N^a &:=  \pscale^{-2}\zeta^{ab}\bnabla_b\pscale, & \nu &:= \pscale^{-2}\zeta^{ab}\bnabla_a\pscale\bnabla_b\pscale,
\end{align}
are finite along the boundary. We reformulate this fact in:

\begin{theo}\mbox{}\label{Projective compactification review: CapGoverBoundaryDefiningFunction}
A \underline{special} scale $\bnabla$, defined on a neighbourhood $U$ of a point $x_0 \in \mathcal{H}_\pm$, is such that 
\[ N^a: =\pscale^{-2}\zeta^{ab}\nabla_b \pscale \]
extends smoothly to $\B\cap U$ if and only if it preserves a density $\tau$ satisfying
\[ i^*\tau = \sqrt{|\lambda_0|}.\] 

In this case, the canonical density $\lambda_0$ is given indifferently on $U$ by any of the following,
	\begin{align}\label{Projective compactification review: identities on lambda nu}
		\dscale = -\detd \bdh^{ab} = i^* \nu^{-1} = \pm i^* \tau^2 \qquad \in\E_{\B}(2). 
	\end{align} 
When evaluated in such scales the metric can then be rewritten as
\begin{equation}\label{Projective compactification review: asymptotic form of the metric} \zeta_{ab}= \frac{\nu^{-1}}{\pscale^2}\nabla_a\pscale \nabla_b\pscale + q_{ab},
		\end{equation}
		for some finite tensorial density $q_{ab}$.
\end{theo}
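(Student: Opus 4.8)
The plan is to derive the statement from the \v{C}ap--Gover analysis of \cite{Cap:2014ab} — which already furnishes, near $x_0\in\mathcal H_\pm$, a scale $\nabla_0$ extending smoothly to $\partial M$ with $N^a$ and $\nu$ finite along the boundary (this is also where $\mathcal H_0=\mathcal Z(\lambda_0)$ is excluded, so that $\nu\neq 0$ and $\bdh^{ab}$ is non-degenerate on $\mathcal H_\pm$), the identity $i^*\nu^{-1}=-\detd\bdh^{ab}$, and the asymptotic form \eqref{Projective compactification review: asymptotic form of the metric} — and to pin down the preserved density $\tau$ using the parallel tractors $H^{AB}$ and $I_A=D_A\hat\sigma$ of \eqref{ProjectiveTractor: H and I}. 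The first step is bookkeeping: for a generic scale $\nabla=\hat\nabla+\hat\sigma^{-1}\nabla_a\hat\sigma$ on $\mathring M$, applying \eqref{ChangeOfProjectiveSplitting} to $H^{AB}=\zeta^{ab}\hat W^A_a\hat W^B_b$, $I_A=\hat\sigma\hat Y_A$ (equivalently, contracting $H^{AB}I_A=0$ in the scale $\nabla$ with the tractor algebra) yields $I_A=\hat\sigma Y_A+\nabla_a\hat\sigma\,Z^a_A$, $\lambda^a=-2\hat\sigma N^a$, and $\nu=N^a\nabla_a\hat\sigma$, cf.\ \eqref{TractorMetricBoundaryScale}, \eqref{ProjectiveTractorMetric}. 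A scale is special iff $\nabla-\hat\nabla$ is exact; if it preserves a weight-$1$ density $\tau$ then $\nabla-\hat\nabla=-\dd\log(\hat\sigma/\tau)$, and — provided $\nabla$ extends smoothly to $\partial M$ — $\tau$ extends to a nowhere-zero parallel density, $\rho:=\hat\sigma/\tau$ is a smooth boundary defining function, $\nabla_a\hat\sigma=\tau\,\partial_a\rho$, hence $\zeta^{ab}\partial_b\rho=\hat\sigma^2\tau^{-1}N^a$ and $\nu=\rho^{-2}\zeta^{ab}\partial_a\rho\,\partial_b\rho$.

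The crux is to compute the scale-independent function $\Theta_\tau:=H^{AB}D_A\tau\,D_B\tau$. In a scale preserving $\tau$ one has $D_A\tau=\tau Y_A$ (the lower slot $\nabla_a\tau$ vanishes), so, using $H^{AB}Y_AY_B=\nu$, one gets $\Theta_\tau=\tau^2\nu$ in that scale. Differentiating and invoking \eqref{TractorConnection}, in the same scale $\nabla_cD_A\tau=(\nabla_c\nabla_a\tau+P_{ca}\tau)Z^a_A=\tau P_{ca}Z^a_A$, and since $H^{AB}Z^a_AY_B=\tfrac12\lambda^a=-\hat\sigma N^a$ one obtains
\[\nabla_c\Theta_\tau=-2\,\hat\sigma\,\tau^2\,P_{ca}\,N^a .\]
Therefore, whenever $\nabla$ is a special scale that extends to $\partial M$ with $N^a$ finite, the right-hand side vanishes along $\partial M$, so $i^*\Theta_\tau$ is locally constant on $\mathcal H_\pm$; combined with $\Theta_\tau=\tau^2\nu$ and the cited $i^*\nu^{-1}=-\detd\bdh^{ab}=\lambda_0$, this forces $i^*(\tau^2)$ to be a locally constant multiple of $\lambda_0$. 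Since $i^*(\tau^2)>0$ while $\lambda_0$ is $\pm$-signed on $\mathcal H_\pm$, multiplying $\tau$ by a suitable positive constant achieves $i^*\tau=\sqrt{|\lambda_0|}$, whereupon $\lambda_0=\pm\,i^*\tau^2$, establishing \eqref{Projective compactification review: identities on lambda nu}.

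For the equivalence itself, fix such a special good scale $\nabla_0$, normalised so its preserved density $\tau_0$ has $i^*\tau_0=\sqrt{|\lambda_0|}$, and set $\rho_0=\hat\sigma/\tau_0$; for an arbitrary special scale $\nabla$ extending to $\partial M$, write $\rho=\hat\sigma/\tau=e^{\psi}\rho_0$ with $\psi$ smooth. From the first-step formulae, $N^a_{\nabla}=N^a_{0}+\rho_0^{-1}\tau_0^{-1}\,\zeta^{ab}\partial_b\psi$. Because $i^*\bigl(\zeta^{ab}\partial_b\psi\bigr)=\bdh^{ab}\,\partial_b(i^*\psi)$ and $\bdh^{ab}$ is non-degenerate on $\mathcal H_\pm$, $N^a_{\nabla}$ is finite along $\partial M$ if and only if $i^*\psi$ is locally constant, if and only if $i^*\tau=e^{-i^*\psi}\sqrt{|\lambda_0|}$ is a locally constant multiple of $\sqrt{|\lambda_0|}$, if and only if — after the corresponding constant rescaling of $\tau$ — $\nabla$ preserves a density $\tau$ with $i^*\tau=\sqrt{|\lambda_0|}$. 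This is precisely the stated equivalence.

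I expect the conceptual content to be entirely contained in the identity $\nabla_c\Theta_\tau=-2\hat\sigma\tau^2 P_{ca}N^a$, which makes the constancy of $i^*\Theta_\tau$, and hence the canonical normalisation $i^*\tau=\sqrt{|\lambda_0|}$, transparent. The main obstacle is rather the routine-but-delicate density bookkeeping imported from \cite{Cap:2014ab}: tracking projective weights through $\detd$ and through the identification of density bundles along the totally geodesic $\partial M$, so as to pass the $n$-dimensional identity $\detd\zeta^{ab}=\pm\hat\sigma^2$, block-decomposed relative to $TM=\R\,N\oplus\ker\dd\hat\sigma$ (a Schur-complement computation), to both $i^*\nu^{-1}=-\detd\bdh^{ab}$ and the smooth extension of $q_{ab}$ in \eqref{Projective compactification review: asymptotic form of the metric}.
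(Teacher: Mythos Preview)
Your proof is correct and follows essentially the same route as the paper's. Both arguments hinge on (i) the identity $\nabla_c(\tau^2\nu)=-2\hat\sigma\,\tau^2 P_{ca}N^a$, which forces $i^*(\tau^2\nu)$ to be locally constant once $N^a$ is finite, and (ii) comparing an arbitrary special scale to a known good one furnished by \cite{Cap:2014ab}. Your packaging of (i) via the scale-invariant function $\Theta_\tau=H^{AB}D_A\tau\,D_B\tau$ is a nice tractor-theoretic rephrasing of what the paper obtains from its Eq.~\eqref{Projective compactification review: identities on schouten}; and your multiplicative parametrisation $\rho=e^{\psi}\rho_0$ in (ii) is equivalent to the paper's additive $\tau=\tilde\tau+\hat\sigma\,\omega$. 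The one substantive difference is that the paper carries out the Schur-complement determinant step $i^*\nu^{-1}=-\detd\bar h^{ab}$ explicitly in two lines, whereas you cite it and sketch the bookkeeping; this is not a gap, but be aware that this identity is proved \emph{in} the theorem rather than imported.
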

\begin{proof}
First, let $\nabla$ preserve a density $\tau$ and satisfy the condition on $N^a$. Then it follows from results in \cite{Cap:2014ab} (see in particular Eq.~\eqref{Projective compactification review: identities on schouten} below) that $\tau^2\nu$ is constant along the boundary and  (rescaling $\tau$ if necessary by a constant factor) we can achieve $\iota^*\tau^2=\pm i^*\nu^{-1}$. By definition of $N^a$ and $\nu$, it also follows that in this scale
\begin{equation*}
\zeta^{ab}= \nu^{-1} \pscale^2 N^a N^b + q^{ab},
\end{equation*}
with $q^{ab}$ restricting to $\bar h^{ab}$ along the boundary and in particular $q^{ab}\nabla_{b}\pscale =0$. We deduce, since $\zeta^{ab}$ has Lorentzian signature,
\begin{equation*}
\pscale^2 := \lvert\detd\zeta\rvert = - \detd\zeta = -\nu^{-1} \pscale^2 (N^a \nabla_{a}\pscale)^2 \detd \bar{h} + O(\pscale)
\end{equation*}
and thus $\iota^* \nu^{-1} = -\detd \bar{h}$.

Assume now that $i^*\tau = \dscale[1]$. By~\cite[Lemma 3.13, Proposition 3.14]{Cap:2014ab} near every point $x_0 \in \mathcal{H}_\pm$ there exists a density $\tilde\tau$ such that $\tilde{N}^a=\pscale^{-2}\zeta^{ab}\tilde{\nabla}_b\pscale$ extends smoothly to $U\cap \B$, where $\tilde\nabla$ is the  projective scale determined by $\tilde\tau$.
Near $x_0$, one may write: $\tau=\tilde{\tau} + \pscale \omega$ so that
\[\pscale^{-2}\zeta^{ab}\nabla_b\pscale= \tilde{N}^a - \tau^{-1}\zeta^{ab}\tilde{\nabla}_b\omega - \tau^{-1}\omega \pscale \tilde{N}^a.  \]
Since the right-hand side is finite by definition of $\tilde{\tau}$, it follows that the left-hand side is also, and consequently, $N^a$ also extends smoothly to boundary points near $x_0$.
\end{proof}

When using these particularly important scales, the Einstein equations also imply the following useful properties on the projective Schouten tensor:

\begin{align}\label{Projective compactification review: identities on schouten}
	\begin{aligned}
		\nabla_a \nu &= -2\pscale N^bP_{ba} = -\pscale \,\big(2 \nu^{-1} P_{bc}N^bN^c\big)\, \nabla_a \pscale + O(\pscale),
		\\[0.4em] 
		P_{ab}& = \nu\,\bdh_{ab} +  \big(\nu^{-2}P_{cd}N^cN^d\big)\, \nabla_a \pscale \nabla_b \pscale + O(\pscale)
	\end{aligned}
\end{align}

\begin{proof}\mbox{}
	
	 We briefly justify these crucial statements. The first equation is immediate from $\nabla_c H^{AB}=0$ and~\eqref{TractorConnection}. The second follows from~\eqref{DH=0}:
\[P_{cb}\zeta^{ba}= \delta^a_c\nu - \nabla_c\pscale N^a + \pscale \nabla_c N^a. \]
Indeed, decomposing $P_{cb}$:
\[P_{cb}=\nu^{-2}P_{cd}N^cN^d\nabla_a\pscale\nabla_b\pscale +2\nu^{-1}v_{(a}\nabla_{b)}\pscale + \tilde{P}_{ab}\]
where $N^a\tilde{P}_{ab}=0$ and $N^av_a =0$,$\upsilon_a \in \E_a(-3)$.
Since we have the block-diagonal decomposition of the inverse metric:
\[\zeta^{ab}= \pscale^2\nu^{-1} N^a N^b + q^{ab},\]
where $q^{ab}\nabla_b \pscale =0$, Eq.~\eqref{DH=0} shows that:
\[ \nu \delta^{a}_c -\nabla_c\pscale N^a= \nu^{-1}q^{ab}v_b\nabla_a \pscale + q^{ab}\tilde{P}_{bc} + O(\pscale). \]
The first term on the right-hand side is off-diagonal and is consequently vanishing, $q^{ab}v_b \to 0$, which implies that $v_b \to 0$ because $q$ is invertible on the restriction to $T\B$.
\end{proof}

We have already seen that the weight-1 density $\dscale[1] \in \E_{\B}(1)$ is very useful in this context. However, and despite the fact that $\nabla_a\sigma$ provides a natural identification \begin{equation}\label{RestrictionsOfDensitiesAreIdentifiedWithDensitiesOnBoundary}
	\E_\B(1)\simeq \iota^*\E_M(1),
	\end{equation} this boundary density  $\dscale[1]$ has no canonical extension to a section $|\lambda|^{\frac{1}{2}}$ of $\E_M(1)$. This remark was the basis for our definition of $\Spi$ and $\Ti$ in~\cite{RSTA20230042}: these are the total space of line bundles,
	\begin{align*}
		\Spi &\to \mathcal{H}_{+}, & \Ti &\to \mathcal{H}_{-},
	\end{align*}
	 which are tailored to deal with this ambiguity.

Since the Thomas derivative 
of $|\dscale|^\frac{1}{2}$ provides a map \[\mathcal{H}_{\pm} \longrightarrow \T^*_\B\simeq  i^*J^1\E(1)_\M/j^1\pscale=\T^*/I_A,\] we can indeed define the following.
\begin{defi}\label{DefinitionExtendedBoundary}
The \textbf{extended boundaries} $\Spi/ \Ti$, that we will note uniformly $\widetilde{\mathcal{H}}_{+}/\widetilde{\mathcal{H}}_{-}$, are defined to be the pullback to $\mathcal{H}_{\pm}$ of $i^*J^1\E(1)_M=i^*\T^*_M$ (viewed as a bundle with base $i^*J^1\E(1)_\M/J^1\pscale\simeq \T^*_\B$) along the Thomas derivative $D_A \dscale[1]$: 
\begin{center}
\begin{tikzcd}
\eB\quad  \arrow[d, "\pi"] \arrow[r,dashed] &\quad  i^*\T^*_\M \arrow[d,"\textnormal{Can. Proj.}"] \\[0.7em] \mathcal{H}_{\pm}\quad  \arrow[r, "D_A{\dscale[1]}"]&\quad \T^*_\B\simeq \T^*_\M/I
\end{tikzcd}
\end{center}
These are principal $\R$-bundles over $\mathcal{H}_{\pm}$, with $\R$-action defined by
\[ (D_A|\lambda|^{\frac{1}{2}}, k) \mapsto D_A|\lambda|^{\frac{1}{2}} \mp k I_A.\]
\end{defi}

In practice, it can be enlightening to think of sections of this bundle as formal asymptotic expansions to order $1$ in $\pscale$ for an extension of $\dscale[1]$ to a 1-density on $M$. In other words a formal expression of the type:
\[ |\lambda|^\frac{1}{2}= \dscale[1] \mp u \pscale + O(\pscale^2),  \]
where $u$ is a function on $\partial M$.

\section{Non-effective Cartan projective geometry and the extended tractor calculus}\label{sec:HolonomyReductionOfNonEffective}
\subsection{Non-effective homogeneous model} \label{ssec:NoneffectiveModel}
The starting point for our discussion is the Cartan geometry modelled on the \emph{non-effective} description of projective space in $n$ dimensions $\mathbb{R}P^{n}$ as the set of all lines passing through a fixed point $[I]\in \mathbb{R}P^{n+1}$ in projective space in $n+1$ dimensions. Here $I \in \mathbb{R}^{n+2}$ is a fixed vector and this leads to the description of projective space as a homogeneous space
\begin{equation}\label{Homogeneous space model}
	\mathbb{R}P^{n} = G/P
\end{equation}
where $G$ and $P$ are the following Lie subgroups of $PSL(n+2)$:
\begin{align}\label{Non effective model: G and P groups}
\begin{aligned} G&= \left\{ \begin{pmatrix} 1 & \chi_\mu & \chi_0 \\ 0 & A^\mu_\nu & \omega^\mu \\ 0 & \Upsilon_\nu & a  \end{pmatrix} \!\!\!\!\mod Z(SL(n+2)) \right\},\, &P&=\Big\{g \in G, \omega^\mu=0 \Big\}. \end{aligned}
\end{align}
Contrary to the usual description of projective space, the action of $G$ on $\R P^{n}$ is not faithful and its kernel (the largest normal subgroup of $G$ in $P$) is given by the normal subgroup:
 \[K= \left\{ \begin{pmatrix}  1 & \chi_\mu & \chi_0 \\ 0 & \varepsilon \delta^\mu_\nu & 0 \\ 0 & 0&  \varepsilon  \end{pmatrix} \!\!\!\mod Z(SL(n+2)),\quad  \varepsilon I_{n+1} \in Z(SL(n+1)) \right\},\]
we denote $\mathfrak{g},\mathfrak{p}, \mathfrak{k}$ respectively the Lie algebras of the groups $G,P,K$. 
 
The non-effectivity we consider here should be distinguished from that, for instance, present in spin geometry, as the kernel is large enough to be visible in the infinitesimal structure; $K$ is not a discrete group.

Let $\Q$ be a $n$-dimensional manifold equipped with a Cartan geometry ($\mathscr{C} \to\Q, \omega)$ modelled on $G/P$.
A general $\mathfrak{g}$-valued one-form can be parametrised as follows:
\begin{equation}\label{GeneralParametrisationConnection} \begin{pmatrix}0 & \upsilon_\nu& \upsilon_0\\ 0 & \varpi^\mu{}_\nu - \frac{1}{n+1}\varpi^\rho_{\,\,\rho} \delta^\mu_{\,\,\nu}  & \theta^\mu \\ 0 & P_\nu & -\frac{1}{n+1}\varpi^\rho{}_\rho \end{pmatrix}. \end{equation}

We define $\Tscr \to \Q$  the associated vector bundle associated to the restriction to $P$ of the fundamental representation of $PSL(n+2,\R)$. In the abstract index notation, we will use the notation $\ei{A},\ei{B},\dots$ to indicate sections of $\Tscr$.

Quotienting out the kernel $K$, the Cartan geometry ($\mathscr{C}\to\Q, \omega)$ induces a projective connection on $\Q$. We shall make the assumption that the connection obtained from this procedure is \emph{normal} and write $[\nabla]$ for the corresponding projective class; observe that this implies that the Cartan connection $\omega$ is also torsion-free. By extension, we shall also say that these connections are \emph{normal}. 

The bundle $\Tscr$ has the decomposition structure given in Figure~\ref{DecompositionTscr}, realising it as an extension of the usual projective tractor bundle $\T$ with its normal Cartan connection.
\begin{figure}[h!]
\centering
\begin{tikzcd} 0\arrow[r] &\R \arrow[r, "I"] & \Tscr \arrow[r,"\Pi"] & \T \arrow[r] &0.  \end{tikzcd}
\caption{Decomposition sequence of $\Tscr$\label{DecompositionTscr}}
\end{figure}
\FloatBarrier
This filtration corresponds to the existence of distinguished parallel sections $I^{\ei{A}}$, $\Pi_{\,\,\ei{A}}^{A}$,
\begin{align}\label{Derivative of I and Pi}
	\nabla_a I^{\ei{A}} &=0, &\nabla_a \Pi_{\,\,\ei{A}}^{A}&=0
\end{align}
 respectively realising the first injection and the projection. The sequence can be split by any choice of section $L_{\ei{A}}$ such that $L_{\ei{C}}I^{\ei{C}}=1$; two such sections differ by a projective cotractor $\chi_A = \chi_a\, Z^a_A  +  \chi_0\, Y_A$ and each choice realises a non-canonical isomorphism,
\begin{equation}\label{Non effective model: ExteT = R + T}
\Tscr \simeq \R \oplus \T.
\end{equation}
The parameters $\chi_{\mu}$ and $\chi_{0}$ of the realisation \eqref{Non effective model: G and P groups} of the group $G$, correspond to cycling through different choices of splitting or, equivalently, to cycling through different realisations of the isomorphism \eqref{Non effective model: ExteT = R + T}. As this degree of freedom is at this point abstract, for the remainder of this section we shall simply assume the existence of such splittings.

Similarly to the usual tractor calculus, we define the non-canonical splitting operators $\rotpi^{\ei{A}}{}_A$ and $ L_{\ei{A}}$ so that sections of $\Tscr$ and $\Tscr^*$ can be written 
\begin{align}
T^{\ei{A}} &= f \,I^{\ei{A}} + \rotpi^{\ei{A}}_{\,\,A} \,t^A, &
T_{\ei{A}}& = g \,L_{\ei{A}} + \Pi_{\,\,\ei{A}}^{A} \,\mu_A.
\end{align}
We shall refer to such choice of splitting as \enquote{choosing a gauge}. This is to be distinguished from the projective freedom  $\nabla \in [\nabla]$; that we refer to as a \enquote{choice of scale}. In the following we will repeatedly consider general changes of gauge $L_{\ei{B}} \mapsto \hat{L}_{\ei{B}}$ and parametrise these as
\begin{equation}\label{Change of gauge: definition of chi}
L_{\ei{B}} \qquad  \longmapsto \qquad \hat{L}_{\ei{B}}=L_{\ei{B}}+\chi_A\,\Pi^A_{\,\,\ei{A}}.
\end{equation} 

The linear connection induced by the Cartan connection on $\Tscr$ is summarised in the following Proposition.
\begin{prop}\label{ExtendedConnectionProposition} The linear connection induced on the bundle $\Tscr$ by the Cartan connection is, given a choice of gauge $ L_{\ei{A}}$, described by:
\begin{align}\label{ExtendedConnection} \nabla_c L_{\ei{C}}&=-\upsilon_{Bc}\Pi^B_{\,\,\ei{C}},& \nabla_c I^{\ei{C}}&=0,& \nabla_c \rotpi^{\ei{B}}_{\,\,B}&= \upsilon_{Bc}I^{\ei{B}},& \nabla_c \Pi^B_{\,\,\ei{B}}&=0.  \end{align}
In the above, $\upsilon_{Bc}$ is a cotractor valued $1$-form parametrising the connection and depending on the choice of splitting. Under a change of gauge \eqref{Change of gauge: definition of chi} it transforms according to:
\begin{equation}\label{TransformationRuleCotractorValuedConnectionForm}
\hat{\upsilon}_{Bc}=\upsilon_{Bc} - \nabla_c \chi_B.
\end{equation}
\end{prop}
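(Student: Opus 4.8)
The plan is to prove Proposition~\ref{ExtendedConnectionProposition} by unpacking the definition of the linear connection induced by a Cartan connection on an associated bundle, using the explicit parametrisation \eqref{GeneralParametrisationConnection} of the $\mathfrak{g}$-valued Cartan connection form.

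First I would recall the general mechanism: for a Cartan geometry $(\mathscr{C}\to\Q,\omega)$ modelled on $G/P$ and a $P$-representation $V$, a section of the associated bundle $\mathscr{C}\times_P V$ corresponds to a $P$-equivariant function $s:\mathscr{C}\to V$, and the induced covariant derivative is $\nabla_X s = X\cdot s + \omega(\tilde X)\cdot s$ for $\tilde X$ a lift of $X$ — i.e., one simply acts by the Cartan connection in the given representation. For $\Tscr$ we take $V=\R^{n+2}$, the restriction to $P$ of the fundamental representation of $PSL(n+2)$. A choice of gauge $L_{\ei A}$ amounts to trivialising $\Tscr$ along the reduction; the standard basis vectors of $\R^{n+2}$ are identified, in order, with $I^{\ei A}$ (the first basis vector, since $I\in\R^{n+2}$ is the fixed vector spanning the line whose stabiliser is $P$), the $n$ middle vectors (paired via $\rotpi^{\ei A}{}_A$ with a projective frame), and the last vector (paired via the remaining slot of the tractor splitting). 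Then $\nabla_c$ acting on each of these frame sections is read off column-by-column from the matrix \eqref{GeneralParametrisationConnection}.

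The key computation is then essentially a matrix multiplication. Writing the connection form as in \eqref{GeneralParametrisationConnection} with the bottom-right $(n+1)\times(n+1)$ block being precisely the usual projective Cartan connection acting on $\T$ (this is the content of ``quotienting by $K$ gives the normal projective connection $[\nabla]$''), the extra first row $(\,0\ \ \upsilon_\nu\ \ \upsilon_0\,)$ encodes the off-diagonal coupling. Acting on the first basis vector $I^{\ei A}$: the first column of the matrix is zero, so $\nabla_c I^{\ei A}=0$. Acting on the lower block of basis vectors: the matrix sends them into the lower block (reproducing $\nabla_c\Pi^B{}_{\ei B}=0$, i.e. the projection is parallel, and $\nabla_c \rotpi^{\ei B}{}_B = \upsilon_{Bc}I^{\ei B}$, where $\upsilon_{Bc}$ assembles $(\upsilon_\nu,\upsilon_0)$ into a cotractor-valued one-form via the tractor splitting $Z^a_A, Y_A$) plus a contribution up into the $I$-direction given by the top row — this is the $\upsilon_{Bc}I^{\ei B}$ term. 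Acting on $L_{\ei C}$: the dual/transpose action yields $\nabla_c L_{\ei C}=-\upsilon_{Bc}\Pi^B{}_{\ei C}$, the sign coming from the fact that $L$ is a section of the dual bundle $\Tscr^*$ and the top row becomes a left column under transpose. Compatibility with \eqref{Derivative of I and Pi} is then automatic. The identification of the cotractor $\upsilon_{Bc}=\upsilon_{ac}Z^a_B+\upsilon_{0c}Y_B$ requires checking that the $(\upsilon_\nu,\upsilon_0)$ entries transform as the components of a weighted cotractor under $P$, which follows from $P$-equivariance of $s$ together with the block-upper-triangular form of $P\subset G$.

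For the transformation rule \eqref{TransformationRuleCotractorValuedConnectionForm}, I would compute directly: a change of gauge $L_{\ei B}\mapsto \hat L_{\ei B}=L_{\ei B}+\chi_A\Pi^A{}_{\ei B}$ corresponds (at the level of the group $G$) to acting by the unipotent element with parameters $(\chi_\mu,\chi_0)$ in \eqref{Non effective model: G and P groups}, i.e. conjugating the Cartan connection form by this $G$-valued gauge transformation. Since this element is in $K$, it does not change the induced projective data, but it shifts the top row by the exterior derivative of its parameters: $\upsilon\mapsto\upsilon - d\chi$ up to lower-order terms that vanish because the relevant blocks of the adjoint action are trivial (the $K$-element commutes with the semisimple part modulo the centre). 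Translating $d\chi$ into the covariant expression $\nabla_c\chi_B$ — legitimate because $\chi_B$ is a genuine (parallel-frame) cotractor and the correction terms organise into the projective tractor connection acting on $\chi_B$ — gives $\hat\upsilon_{Bc}=\upsilon_{Bc}-\nabla_c\chi_B$. Alternatively, and more cleanly, one verifies this by demanding consistency: differentiate the identity $\hat L_{\ei B}=L_{\ei B}+\chi_A\Pi^A{}_{\ei B}$ using \eqref{ExtendedConnection} for both $L$ and $\hat L$, use $\nabla_c\Pi^A{}_{\ei B}=0$ and the Leibniz rule, and match the $\Pi^B{}_{\ei C}$ components to read off $\hat\upsilon_{Bc}=\upsilon_{Bc}-\nabla_c\chi_B$.

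The main obstacle I anticipate is purely bookkeeping: correctly matching the abstract-index tractor notation (the maps $Z^a_A, Y_A, X^A, W^A_a$ and their weights) to the explicit matrix blocks in \eqref{GeneralParametrisationConnection}, and tracking signs and weight factors consistently so that the four relations in \eqref{ExtendedConnection} come out in exactly the stated normalisation. There is no conceptual difficulty — everything follows from ``the Cartan connection acts in the fundamental representation'' — but the paper will need to be careful that $\upsilon_{Bc}$ is genuinely $\Tscr^*$-valued (not merely $\T^*$-valued) or, if it is $\T^*$-valued, to explain why (it should be: the top row has no $I$-slot coupling back, consistent with $\nabla_c I^{\ei C}=0$). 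I would present the proof as: (i) recall the associated-bundle connection formula; (ii) fix the correspondence between $\Tscr$-frame sections and standard basis vectors in a gauge; (iii) read off \eqref{ExtendedConnection} from \eqref{GeneralParametrisationConnection} by matrix action; (iv) derive \eqref{TransformationRuleCotractorValuedConnectionForm} by the consistency/Leibniz argument of the previous paragraph.
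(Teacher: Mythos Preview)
The paper does not actually supply a proof of this proposition: it is stated as a summary of how the induced linear connection looks in a chosen gauge, and the text moves directly on to the curvature computation. Your proposal is correct and simply fills in the routine details the authors omit --- reading off \eqref{ExtendedConnection} from the matrix form \eqref{GeneralParametrisationConnection} of the Cartan connection acting in the fundamental representation, and obtaining \eqref{TransformationRuleCotractorValuedConnectionForm} by differentiating $\hat L_{\ei B}=L_{\ei B}+\chi_A\Pi^A{}_{\ei B}$ and using $\nabla_c\Pi^A{}_{\ei B}=0$. Your caution about bookkeeping is well placed but there is no missing idea; the authors evidently regarded the content as definitional.
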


The curvature tractor is readily computed in terms of the connection coefficient $\upsilon_{cC}$ and the usual projective curvature tractor:
\begin{equation}
\Omega_{ab\phantom{\ei{A}}\ei{B}}^{\phantom{ab}\ei{A}}=2\nabla_{[a}\upsilon_{|A|b]}\, \Pi^A_{\,\,\ei{B}}\,I^{\ei{A}} + \Omega_{ab\phantom{A}B}^{\phantom{ab}A}\,\Pi^B_{\,\,\ei{B}}\,\rotpi^{\ei{A}}_{\,\,A}.
\end{equation}

To summarise this section, we have seen that the non-effective Cartan geometries that we consider, and in particular modelled on \eqref{Homogeneous space model}, are parametrised (see Eqs. \eqref{GeneralParametrisationConnection}, \eqref{ExtendedConnection}) by a projective connection together with a cotractor valued $1$-form $\upsilon_{Bc}$. The latter is related to the non-effectiveness of the model and unconstrained at this stage. In the coming sections we will see that this freedom can be fixed by requiring a holonomy reduction to the Poincaré group $ISO(1,n)\subset G$.

\subsection{Metric holonomy reduction}\label{ssec:metric-holonomy-reduction}

We shall now additionally assume that there is a non-degenerate \emph{parallel} section $H^{\ei{A}\ei{B}}$ of signature $(n+1,2)$,
\begin{equation}\label{Derivative of H}
	\nabla_a H^{\ei{A}\ei{B}} =0,
\end{equation}
 such that the canonical section  $I^{\ei{A}}$ is null; \[\Phi_{\ei{A}\ei{B}}I^{\ei{A}}I^{\ei{B}}=0.\]
 Here $\Phi_{\ei{A}\ei{B}}$ denotes the pointwise inverse of $H^{\ei{A}\ei{B}}$.
 
This amounts to having a holonomy reduction to the Poincaré group $ISO(1,n)\subset G$ and, in this section, we will draw some immediate consequences of these equations.
 
   \subsubsection{Decomposition in a gauge}
 
 Making use of the invariant projection $\Pi : \Tscr \to \mathcal{T}$ and the the canonical tractor $I^{\ei{A}}$, one obtains a pair of projective tractors $(H^{AB} ,I_A)$,
 	\begin{align} H^{AB}&= H^{\ei{A}\ei{B}}\Pi_{\,\,\ei{A}}^A \Pi_{\,\,\ei{B}}^B,& I_B&= \Phi_{\ei{A}\ei{B}}I^{\ei{A}}\rotpi^{\ei{B}}_{\,\,B}. \label{ProjectiveScaleTractorFromExtendedTractorBundle}
 	\end{align}
These are gauge invariant quantities; note that the invariance of $I_B$ relies on the fact that $I^{\ei{A}}$ is null. This condition also implies that $H^{AB}$ is degenerate of kernel $I_A$,
\begin{equation}
 H^{AB}I_B =0.
 \end{equation}

 It will be useful to introduce the following notations for \emph{gauge-dependent} decompositions of $H^{\ei{A}\ei{B}}$.  These notations will be used throughout.
 \begin{defi}\label{Definition: sExtendedMetricComponents}
 	Any gauge $L_{\ei{A}}$ uniquely defines a smooth function $f$, a projective tractors $J^B$, and a symmetric bilinear form $\varphi_{BC}$  on $\mathcal{T}$ such that
 	\begin{align}\label{ExtendedMetricComponents}
 		\begin{alignedat}{2}
 			H^{\ei{A}\ei{B}}&=H^{AB}\,\rotpi^{\ei{A}}_{\,\,A}\rotpi^{\ei{B}}_{\,\,B} + 2 J^B\, I^{(\ei{A}}\rotpi^{\ei{B})}_{\,\,B} + f\, I^{\ei{A}}I^{\ei{B}},\\[0.4em]
 			\Phi_{\ei{B}\ei{C}}&=2I_{C} L_{(\ei{B}}\Pi^C_{\,\,\ei{C})}+\varphi_{BC}\, \Pi^B_{\,\,\ei{B}}\Pi^{C}_{\,\,\ei{C}}.
 		\end{alignedat}
 	\end{align}
 	
 	Since $H^{\ei{A}\ei{C}} \Phi_{\ei{C}\ei{B}} = \delta^{\ei{A}}_{\st \ei{B}}$, they satisfy the relations
 	\begin{align} \label{AlmostInverseProperty + LinkfJvPhi}
 		H^{AC}\varphi_{CB} + J^{A}I_{B}&=\delta^A_{\,\,B}, &
 		fI_A&=-J^B\varphi_{AB}, &
 		J^CI_C&=1.
 	\end{align}

 \end{defi}
 
 We can now turn to the consequences of imposing that $H^{\ei{A}\ei{B}}$ is parallel transported. First, it immediately follows from \eqref{Derivative of I and Pi} and \eqref{Derivative of H} that
  \begin{align} \label{Paralel transport of H, consequences1}
  	\nabla_c H^{AB}&= 0, & \nabla_c I_B&=0.
 \end{align}
  In general one has the proposition. 
  \begin{prop}\label{PropertiesExtendedMetricComponents}
  In any gauge $L_{\ei{A}}$, the equations $\nabla_c H^{\ei{A}\ei{B}}=0$ are equivalent to \eqref{Paralel transport of H, consequences1} together with
  	 	\begin{align}
			\nabla_c J^A &= -H^{AB}\upsilon_{Bc},&
  		\nabla_c f &= -2J^B\upsilon_{Bc},&
		\nabla_c \varphi_{AB}&=2\upsilon_{c(A}I_{B)}.
  	\end{align}
  \end{prop}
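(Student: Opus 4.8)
The plan is to work in a fixed gauge $L_{\ei{A}}$ and expand the equation $\nabla_c H^{\ei{A}\ei{B}}=0$ using the decomposition \eqref{ExtendedMetricComponents} together with the known action of the connection on the splitting operators, which is recorded in Proposition~\ref{ExtendedConnectionProposition}: $\nabla_c I^{\ei{C}}=0$, $\nabla_c \rotpi^{\ei{B}}_{\,\,B}=\upsilon_{Bc}I^{\ei{B}}$, $\nabla_c\Pi^B_{\,\,\ei{B}}=0$ and $\nabla_c L_{\ei{C}}=-\upsilon_{Bc}\Pi^B_{\,\,\ei{C}}$. Since $D$ (here the ordinary covariant derivative along the base) satisfies the Leibniz rule, applying $\nabla_c$ to
\[
H^{\ei{A}\ei{B}}=H^{AB}\,\rotpi^{\ei{A}}_{\,\,A}\rotpi^{\ei{B}}_{\,\,B} + 2 J^B\, I^{(\ei{A}}\rotpi^{\ei{B})}_{\,\,B} + f\, I^{\ei{A}}I^{\ei{B}}
\]
produces a sum of terms, each a coefficient from $\{\nabla_c H^{AB},\nabla_c J^B,\nabla_c f\}$ times a "pure" combination of splitting operators, plus the correction terms coming from $\nabla_c\rotpi^{\ei{B}}_{\,\,B}=\upsilon_{Bc}I^{\ei{B}}$, which feed lower filtration components back in. The key structural point is that the three types of tensor monomials $\rotpi^{\ei{A}}_{\,\,A}\rotpi^{\ei{B}}_{\,\,B}$, $I^{(\ei{A}}\rotpi^{\ei{B})}_{\,\,B}$ and $I^{\ei{A}}I^{\ei{B}}$ are pointwise linearly independent in $\Tscr\otimes\Tscr$ (they span complementary pieces of the associated graded), so setting $\nabla_c H^{\ei{A}\ei{B}}=0$ is equivalent to separately annihilating the coefficient of each monomial.

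Concretely, the coefficient of $\rotpi\rotpi$ gives $\nabla_c H^{AB}=0$; the coefficient of $I^{(\ei{A}}\rotpi^{\ei{B})}_{\,\,B}$ collects $\nabla_c J^B$ together with the contribution $H^{AB}\upsilon_{Ac}$ coming from differentiating the two $\rotpi$'s in the first term of $H^{\ei{A}\ei{B}}$, yielding $\nabla_c J^A = -H^{AB}\upsilon_{Bc}$; and the coefficient of $I^{\ei{A}}I^{\ei{B}}$ collects $\nabla_c f$ together with the contribution $2J^B\upsilon_{Bc}$ from differentiating the $\rotpi$ in the middle term, giving $\nabla_c f=-2J^B\upsilon_{Bc}$. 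One then does the same for the inverse form: differentiate $\Phi_{\ei{B}\ei{C}}=2I_C L_{(\ei{B}}\Pi^C_{\,\,\ei{C})}+\varphi_{BC}\Pi^B_{\,\,\ei{B}}\Pi^C_{\,\,\ei{C}}$, use $\nabla_c I_C=0$, $\nabla_c\Pi^C_{\,\,\ei{C}}=0$ and $\nabla_c L_{\ei{B}}=-\upsilon_{Bc}\Pi^B_{\,\,\ei{B}}$; the $\Pi\Pi$ coefficient then reads $\nabla_c\varphi_{BC}-2\upsilon_{c(B}I_{C)}=0$, i.e.\ $\nabla_c\varphi_{AB}=2\upsilon_{c(A}I_{B)}$. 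Conversely, reassembling shows these coefficient equations, together with \eqref{Paralel transport of H, consequences1}, force $\nabla_c H^{\ei{A}\ei{B}}=0$, so the equivalence is genuine. (One can also note $\Phi$ is determined by $H$, so it suffices to treat one of them and deduce the other via $H^{\ei{A}\ei{C}}\Phi_{\ei{C}\ei{B}}=\delta^{\ei{A}}_{\ei{B}}$ and the relations \eqref{AlmostInverseProperty + LinkfJvPhi}, which is a convenient cross-check.)

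The only mildly delicate point — the "main obstacle," such as it is — is bookkeeping: making sure that when $\nabla_c$ hits a symmetrised product like $I^{(\ei{A}}\rotpi^{\ei{B})}_{\,\,B}$ one correctly accounts for both factors and that the $\upsilon$-corrections land in the right filtration slot with the right numerical coefficient (the factors of $2$ in the $J$- and $f$-equations come precisely from there being two $\rotpi$'s and one $\rotpi$, respectively). There is no analytic content; once the independence of the three graded monomials is invoked, the proposition is a direct, if slightly tedious, computation, entirely parallel to the derivation of \eqref{TractorConnection}–\eqref{H decomposition} in the standard tractor setting.
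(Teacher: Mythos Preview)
Your proposal is correct and is exactly the approach the paper takes: the paper's proof consists of the single sentence ``The desired relations are obtained directly using $\nabla_cH^{\ei{A}\ei{B}}=0$ and Proposition~\ref{ExtendedConnectionProposition},'' and your plan simply spells out that direct computation in full, including the parallel treatment of $\Phi_{\ei{B}\ei{C}}$ to obtain the $\varphi$-equation.
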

 \begin{proof}
The desired relations are obtained directly using $\nabla_cH^{\ei{A}\ei{B}}=0 $ and Proposition~\ref{ExtendedConnectionProposition}.
 \end{proof}
 
  By the results in~\cite{Cap:2014ab,Flood:2018aa}, equations \eqref{Paralel transport of H, consequences1} imply that the projective class $[\nabla]$ is the projective class of a projectively compact Ricci-flat Lorentzian metric.  Recall, from the results reviewed in section \ref{ClassicalProjectiveCompactification}, that $(H^{AB}, I_A)$ will then be parametrised in terms of $\zeta^{ab}$, $\pscale$ as in \eqref{ProjectiveTractor: H and I} and \eqref{TractorMetricBoundaryScale}. For the other components of $H^{\ei{A}\ei{B}}$ we have the following proposition.
 
 \begin{prop}\label{Definition: sExtendedMetricComponents in a scale}
 	Under our assumption that $H^{\ei{A}\ei{B}}$ is parallel
	, any gauge $L_{\ei{A}}$ uniquely defines a projective density $\bar \lambda \in\mathcal{E}(2)$ and a projective tractor $\lambda_A \in \mathcal{T}_A(1)$ such that
 	\begin{align}\label{ExtendedMetricComponents in a scale}
  			\bar \lambda &= \varphi_{AB}X^A X^B, &  \lambda_A& = \varphi_{AB}X^B.
  	\end{align}
 	Moreover,
\begin{align}\label{Extended metric expression as a function of lambda}
	\begin{alignedat}{3}
	\varphi_{AB}&= D_B\lambda_A - \pscale\upsilon_{Ab}Z^b_{\st B} - \upsilon_{Cb}Z^b_{\st B} X^C I_A,\\[0.4em]
	\pscale J^A & = X^A - H^{AB} \lambda_B,\\[0.4em]
	\pscale^{2}f& = - \bar \lambda  + H^{AB} \lambda_A\lambda_B.
\end{alignedat}
\end{align}
 \end{prop}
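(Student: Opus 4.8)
The plan is to derive the three identities in~\eqref{Extended metric expression as a function of lambda} by a direct computation that feeds the algebraic relations~\eqref{AlmostInverseProperty + LinkfJvPhi} and the derivative formulas of Proposition~\ref{PropertiesExtendedMetricComponents} into the standard projective tractor toolkit. The only external facts I need are: the Leibniz rule for the Thomas operator together with $D_AX^B=\delta^B_{\st A}$ (Eq.~\eqref{DderivativeX}); and the contraction $X^AI_A=\pscale$, which holds because $I_A=D_A\pscale$ (Eq.~\eqref{ProjectiveTractor: H and I}, itself a consequence of~\eqref{Paralel transport of H, consequences1}), so that in any scale $I_A=\pscale\,Y_A+(\nabla_a\pscale)\,Z^a_A$ and $X^AY_A=1$, $X^AZ^a_A=0$. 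Since $\bar\lambda$ and $\lambda_A$ are simply \emph{defined} by the contractions~\eqref{ExtendedMetricComponents in a scale}, and a gauge $L_{\ei{A}}$ fixes $\varphi_{AB}$ via Definition~\ref{Definition: sExtendedMetricComponents}, the uniqueness clause is automatic and only the three displayed formulas need proof; morally, they express the ``extra'' components $\varphi_{AB}, J^A, f$ of $H^{\ei A\ei B}$ in terms of the projective scale data and the pair $(\lambda_A,\bar\lambda)$.

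I would first dispose of the two purely algebraic identities. Contracting $H^{AC}\varphi_{CB}+J^AI_B=\delta^A_{\st B}$ with $X^B$ and using $I_BX^B=\pscale$ gives $H^{AB}\lambda_B+\pscale J^A=X^A$, which is the middle line of~\eqref{Extended metric expression as a function of lambda}. For the last line, write $H^{AB}\lambda_A\lambda_B=\varphi_{AC}X^C\big(H^{AB}\varphi_{BD}X^D\big)$, use the same relation to replace the bracket by $X^A-\pscale J^A$, and obtain $H^{AB}\lambda_A\lambda_B=\varphi_{AC}X^CX^A-\pscale\,J^A\varphi_{AC}X^C=\bar\lambda+\pscale^2 f$, the last equality using $J^A\varphi_{AC}=-fI_C$ and $I_CX^C=\pscale$ from~\eqref{AlmostInverseProperty + LinkfJvPhi}. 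Rearranging yields $\pscale^2 f=-\bar\lambda+H^{AB}\lambda_A\lambda_B$.

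For the first (differential) line I would apply the Thomas operator to $\lambda_A=\varphi_{AC}X^C$. The Leibniz rule and $D_BX^C=\delta^C_{\st B}$ give $D_B\lambda_A=\varphi_{AB}+(D_B\varphi_{AC})X^C$. Since $\varphi_{AC}$ has projective weight $0$, the definition~\eqref{Definition: Thomas operator} of $D$ kills the $Y_B$-term and leaves $D_B\varphi_{AC}=Z^b_{\st B}\nabla_b\varphi_{AC}$; substituting $\nabla_b\varphi_{AC}=\upsilon_{Ab}I_C+\upsilon_{Cb}I_A$ from Proposition~\ref{PropertiesExtendedMetricComponents} (writing the tractor index of $\upsilon$ first) and contracting with $X^C$, using $I_CX^C=\pscale$, gives $(D_B\varphi_{AC})X^C=\pscale\,\upsilon_{Ab}Z^b_{\st B}+\upsilon_{Cb}Z^b_{\st B}X^CI_A$. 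Solving for $\varphi_{AB}$ reproduces the first line of~\eqref{Extended metric expression as a function of lambda}.

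There is no real obstacle in this proof; it is a short calculation once the ingredients are assembled. The only places requiring mild care are tracking the projective weight of $\varphi_{AB}$ (so that the $Y_B$-component of its Thomas derivative correctly drops out) and unfolding the symmetrisation $\nabla_c\varphi_{AB}=2\upsilon_{c(A}I_{B)}$ — it is precisely this symmetrisation that produces both correction terms in the formula for $\varphi_{AB}$, one proportional to $\pscale$ and one proportional to $I_A$. As a sanity check one may verify compatibility of all three formulas with the change-of-gauge rules~\eqref{TransformationRuleCotractorValuedConnectionForm}--\eqref{Change of gauge: definition of chi} and the induced transformations of $J^A$, $f$, $\varphi_{AB}$, though this is not needed for the argument.
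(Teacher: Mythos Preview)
Your proof is correct and follows essentially the same approach as the paper's own argument: the Thomas derivative of $\lambda_A=\varphi_{AB}X^B$ combined with the Leibniz rule, $D_BX^C=\delta^C_{\st B}$, and the formula $\nabla_b\varphi_{AC}=2\upsilon_{b(A}I_{C)}$ from Proposition~\ref{PropertiesExtendedMetricComponents} yields the first line, while the two algebraic identities are obtained from~\eqref{AlmostInverseProperty + LinkfJvPhi} by contracting with $X^B$ (and then $\lambda_A$), exactly as in the paper. The only cosmetic difference is the order of presentation and your slightly different unpacking of the third identity via $H^{AB}\lambda_A\lambda_B$ rather than via $\pscale J^A\lambda_A$, but the content is identical.
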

In other words, in a gauge $L_{\ei{A}}$, the triplet ($\nabla, H^{\ei{A}\ei{B}}, I^{\ei{a}}$) is entirely parametrised by ($\zeta^{ab}$, $\pscale$) and ($\upsilon_{Cb}$, $\lambda_A$). As we shall see, these last two quantities are essentially pure gauge and everything can in fact be determined by the metric data ($\zeta^{ab}$, $\pscale$).

 \begin{proof}\mbox{}
 	
 	By definition, $\lambda_A = \varphi_{AB}X^B$, hence taking the Thomas derivative:
 	\[ D_C\lambda_A = (D_C\varphi_{AB})X^B + \varphi_{AC}=\nabla_c\varphi_{AB}Z^c_{\st C}X^B + \varphi_{AC}. \]
 	Using Proposition~\ref{PropertiesExtendedMetricComponents} this becomes
 	\begin{align*}
 	 D_C\lambda_A = 2\upsilon_{c(A}I_{B)} Z^c_C X^B +  \varphi_{AC} =  \pscale \upsilon_{cA}Z^c_C + \upsilon_{cB}X^BI_AZ^c_C +  \varphi_{AC},
 	\end{align*}
 	which gives the desired expression for $ \varphi_{AB}$.
 	
 	Second, contracting the equation on the left of \eqref{AlmostInverseProperty + LinkfJvPhi} with $X^B$ gives 
 	\begin{equation*}
 		H^{AC}\varphi_{CB} X^B + J^A \pscale = X^A
 	\end{equation*}
 	and making use of the definition $\lambda_A = \varphi_{AB}X^B$ 
	 this yields the second relation in \eqref{Extended metric expression as a function of lambda}. In order to obtain the third, we contract the second with $\lambda_A$
 	\begin{equation*}
 		\pscale J^A \lambda_A  = \bar \lambda - H^{AB}\lambda_A\lambda_B
 	\end{equation*}
 	and rewrite the left-hand side as
 	\begin{equation*}
 	 \pscale J^A \lambda_A =  \pscale J^A \varphi_{AB} X^B = -\pscale f I_A X^B = - \pscale^2 f,
 	\end{equation*} 	
 	where we made use of the second equation in \eqref{AlmostInverseProperty + LinkfJvPhi}.
 \end{proof}
 
 \paragraph{Orbit decomposition}\mbox{}
 
 It follows from \eqref{Paralel transport of H, consequences1}, and the results from~\cite{Cap:2014ab,Flood:2018aa}
 that the projective class $[\nabla]$ is the projective class of a Ricci-flat projectively compact of order 1 Einstein metric:
 \begin{equation}
 \pg_{ab} = \pscale^{-2} \zeta_{ab}
 \end{equation}
 with $\zeta^{ab}=	H^{AB}Z_A^aZ_B^b$, $\pscale = X^AI_A$. We have already summarised in Section \ref{ClassicalProjectiveCompactification} some consequences of this fact.
 
 Note that whilst $\pscale \in \mathcal{E}(1)$ is a gauge-independent quantity, and gives the curved orbit decomposition
 \begin{equation*}
 	\Q = \Qint \cup \B,
 \end{equation*}
 $\bar\lambda \in \mathcal{E}(2)$, defined in Eq.~\eqref{ExtendedMetricComponents in a scale}, is a gauge-\emph{dependent} quantity: under the change of gauge \eqref{Change of gauge: definition of chi} one has
 \begin{equation}\label{GaugeChangeLambdaBar} \bar \lambda \mapsto \bar\lambda -2\pscale\chi_AX^A.\end{equation}
 However, its restriction to $\B = \mathcal{Z}(\pscale)$,
  \begin{align}
 \dscale := i^* \bar \lambda \; \;\in \mathcal{E}_{\B}(2),
 \end{align}
  \emph{is} gauge independent. Consequently, the decomposition of $\B$ according to the sign of $\dscale$
  \begin{equation}
  	\B = \mathcal{H}^- \cup  \mathcal{H}^0  \cup \mathcal{H}^+
  \end{equation}
    is geometrically relevant. We will see that $\dscale$ in fact coincides with the distinguished scale on $\B$ discussed in Section~\ref{ClassicalProjectiveCompactification} (see the discussion before Theorem \ref{Projective compactification review: CapGoverBoundaryDefiningFunction}).

  \subsubsection{The extended connection on the interior $\Qint$}
  
   Our first result is to prove that, on $\Qint$, the extended connection is in fact completely fixed by the holonomy reduction \eqref{Derivative of H}.

\begin{theo}\label{Theorem: UniquenessOnInside}\mbox{}
	
On $\Qint$, both the connection $\nabla_c$ on $\Tscr$ and $H^{\ei{A}\ei{B}}$ are completely determined by $\zeta_{ab}$ and $\pscale$. More precisely:
\begin{enumerate}
\item On $\Qint$ there is a unique gauge $\mL_{\ei{A}}$ such that
\begin{equation}\label{Definition: Metric gauge}
	 \hat{\lambda}_A := \mvphi_{AB}X^B=0.
\end{equation}
\item In this preferred gauge, the connection components \eqref{ExtendedConnection} are given by
\begin{equation}
\pgctform_{Ac}=-\pscale^{-1}\zeta_{cb}Z^b_{\st A},
\end{equation}
and the metric components \eqref{ExtendedMetricComponents} by
\begin{align}
\mf&=0, & \mJ^B&=\pscale^{-1}X^B, & \mvphi_{BC}&=\zeta_{bc}Z^b_{\st B}Z^c_{\st C}.
\end{align}
\end{enumerate}
\end{theo}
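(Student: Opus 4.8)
The plan is to work entirely on the interior $\Qint$, where $\pscale$ is nowhere vanishing, so that $\pscale^{-1}$ is a well-defined density and $\pg_{ab}=\pscale^{-2}\zeta_{ab}$ is an honest Ricci-flat metric. First I would establish existence and uniqueness of the gauge in item (1). By Proposition~\ref{Definition: sExtendedMetricComponents in a scale}, any gauge $L_{\ei{A}}$ produces a tractor $\lambda_A=\varphi_{AB}X^B$, and under the change of gauge \eqref{Change of gauge: definition of chi}, $L_{\ei{B}}\mapsto L_{\ei{B}}+\chi_A\Pi^A_{\,\,\ei{B}}$, one computes from \eqref{ExtendedMetricComponents} how $\varphi_{BC}$ and hence $\lambda_A$ transform. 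Since the gauge freedom is parametrised by an arbitrary projective cotractor $\chi_A=\chi_aZ^a_A+\chi_0Y_A$, and since $\lambda_A$ is itself a cotractor of weight $1$, the transformation of $\lambda_A$ should be affine in $\chi_A$ with an invertible (in fact, essentially identity-like up to the factor $\pscale$) leading term; solving $\hat\lambda_A=0$ then gives a unique $\chi_A$ on $\Qint$, establishing (1). The point where one must be slightly careful is checking that the coefficient of $\chi_A$ in the transformation law is invertible precisely when $\pscale\neq 0$ — this is exactly why the statement is restricted to the interior.

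Next I would verify the explicit formulas in item (2) in this preferred gauge $\mL_{\ei{A}}$. With $\hat\lambda_A=0$, the three relations of \eqref{Extended metric expression as a function of lambda} collapse: the first gives $\mvphi_{AB}=-\pscale\pgctform_{Ab}Z^b_B-\pgctform_{Cb}Z^b_BX^CI_A$, the second gives $\pscale\mJ^A=X^A$, i.e. $\mJ^B=\pscale^{-1}X^B$, and the third gives $\pscale^2\mf=-\dscale[2]$... rather, $\pscale^2\mf=-\bar\lambda$, so I need to show $\bar\lambda=0$ in this gauge. But $\bar\lambda=\mvphi_{AB}X^AX^B=\mL_{\ei{A}}$-dependent, and since $\hat\lambda_A=\mvphi_{AB}X^B=0$ we get $\bar\lambda=\hat\lambda_AX^A=0$ immediately, hence $\mf=0$. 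For $\mvphi_{BC}$ and $\pgctform_{Ac}$: from the almost-inverse relation \eqref{AlmostInverseProperty + LinkfJvPhi}, $H^{AC}\mvphi_{CB}+\mJ^AI_B=\delta^A_B$, substituting $\mJ^A=\pscale^{-1}X^A$ and using $\pscale=X^CI_C$ and $\nabla_cX^A=W^A_c$ together with the known form \eqref{TractorMetricBoundaryScale} of $H^{AB}$, one solves for $\mvphi_{CB}$ and should land on $\mvphi_{BC}=\zeta_{bc}Z^b_BZ^c_C$ (using $\zeta^{ab}\zeta_{bc}=\delta^a_c$ on the interior and the block structure). Then $\pgctform_{Ac}$ is read off from $\nabla_c\mJ^A=-H^{AB}\pgctform_{Bc}$, or from the first line of \eqref{Extended metric expression as a function of lambda}, giving $\pgctform_{Ac}=-\pscale^{-1}\zeta_{cb}Z^b_A$; one should double-check consistency with $\nabla_c\mf=-2\mJ^B\pgctform_{Bc}$, which holds since the right side is $-2\pscale^{-1}X^B(-\pscale^{-1}\zeta_{cb}Z^b_B)=2\pscale^{-2}\zeta_{cb}X^BZ^b_B=0$ as $X^BZ^b_B=0$, matching $\nabla_c\mf=\nabla_c0=0$.

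Finally, for the overall assertion that $\nabla_c$ on $\Tscr$ and $H^{\ei{A}\ei{B}}$ are \emph{completely determined} by $\zeta_{ab}$ and $\pscale$: once the preferred gauge is fixed canonically by (1), the connection coefficient $\pgctform_{Ac}$ and all metric components $\mf,\mJ^B,\mvphi_{BC}$ are given by the closed-form expressions in (2), each built solely from $\zeta^{ab}$, $\zeta_{ab}$, $\pscale$, $\pscale^{-1}$ and the universal projective splitting operators $X,Y,Z,W$; since the underlying projective connection $[\nabla]$ is itself determined by $\zeta^{ab}$ (it is the Levi-Civita connection of $\pg$, its normal solution), the full extended structure is determined. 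The main obstacle I anticipate is purely computational: carefully tracking the weights and the $I_A$-terms (which are the source of the gauge-dependence and of the degeneracy $H^{AB}I_B=0$) through the transformation law in step (1), and making sure the inversion producing the unique $\chi_A$ genuinely requires only $\pscale\neq0$ and nothing more. Everything else reduces to substitution into the identities of Propositions~\ref{PropertiesExtendedMetricComponents} and \ref{Definition: sExtendedMetricComponents in a scale}.
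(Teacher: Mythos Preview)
Your proposal is correct and follows essentially the same approach as the paper: establish uniqueness of the gauge by computing the affine transformation of $\lambda_A$ under \eqref{Change of gauge: definition of chi} and inverting (which requires exactly $\pscale\neq0$), then plug $\hat\lambda_A=0$ into \eqref{Extended metric expression as a function of lambda} to obtain $\mJ^A=\pscale^{-1}X^A$ and $\mf=0$, and finally use the almost-inverse relation \eqref{AlmostInverseProperty + LinkfJvPhi} together with the block form \eqref{TractorMetricBoundaryScale} of $H^{AB}$ to identify $\mvphi_{BC}=\zeta_{bc}Z^b_BZ^c_C$ and read off $\pgctform_{Ac}$. The only cosmetic difference is that the paper first writes $\mvphi_{BC}=\xi_{bc}Z^b_BZ^c_C$ for an unknown $\xi$, solves $\pgctform$ in terms of $\xi$, and then uses \eqref{AlmostInverseProperty + LinkfJvPhi} to show $\xi_{bc}=\zeta_{bc}$, whereas you go to \eqref{AlmostInverseProperty + LinkfJvPhi} first; the content is identical.
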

We shall refer to this specific choice of splitting on $\Qint$ as the \emph{metric gauge}; the components of tractors in this gauge will be written with hatted letters. Note that, even though at this stage it is natural to also fix the scale $\nabla$ to be the Levi-Civita connection of $\sigma^{-2}\zeta_{ab}$, one is not forced in anyway to do that: the above condition fixes the gauge $L_{\ei{a}}$ but leaves a complete freedom in the scale $\nabla$.

\begin{proof}[Proof of Theorem~\ref{Theorem: UniquenessOnInside}] \phantom{a}
\begin{enumerate}
\item \textit{Existence and uniqueness of metric gauge.} \smallskip\newline
Under a generic change of gauge \eqref{Change of gauge: definition of chi}, $\varphi_{AB}$ transforms according to
\[\varphi_{AB} \;\mapsto \;\hat{\varphi}_{AB}= \varphi_{AB} - 2 \chi_{(A}I_{B)},\] contracting with $X^B$, one sees that we are trying to achieve
\[\varphi_{AB} \;\mapsto\; 0= \varphi_{AB}X^B - \chi_A \pscale - \chi_BX^BI_A. \]
On $\Qint$ this is uniquely solved by \[\chi_A=\varphi_{AB}X^B\pscale^{-1} -  \frac{1}{2}\varphi_{BC}X^BX^C\pscale^{-2}I_A=\lambda_A\pscale^{-1}-\frac{1}{2}\bar\lambda\pscale^{-2}I_A.\]

\item \textit{Computation of components of $H^{\ei{A}\ei{B}}$ and the connection coefficient $\pgctform_{Ab}$ } \smallskip\newline

Taking $\hat{\lambda}_A =0$ in \eqref{Extended metric expression as a function of lambda} yields $\hat f =0$, $\hat J^A  = \pscale^{-1} X^A$ and 
\begin{equation}
\hat \varphi_{AB} = - \pscale \hat v_{Ab} Z^b{}_B - \hat v_{Cb} Z^b{}_B X^C I_A.
\end{equation}
Since $0= \hat \lambda_A = 	\hat \varphi_{AB} X^B$, we must have $ \mvphi_{BC}=\xi_{bc}Z^b_{\st B}Z^c_{\st C}$ for some $\xi_{bc}$ and the above equation is uniquely solved as
\begin{align*}
	  v_{Ab}= -\pscale^{-1}\xi_{bc} Z^c_{\st A}.
\end{align*}
Finally making use of  $ \mvphi_{BC}=\xi_{bc}Z^b_{\st B}Z^c_{\st C}$, \eqref{TractorMetricBoundaryScale} and \eqref{AlmostInverseProperty + LinkfJvPhi} one finds
\begin{align*}
	\delta^A_{\st B} &=  H^{AC} \xi_{bc}Z^b_{\st B}Z^c_{\st C} + \pscale^{-1} X^A D_B{\pscale}\\
	& = \zeta^{ac} \xi_{bc} \; Z^b_{\st B} W_a^{\st A} - \pscale^{-1} \zeta^{ac}\xi_{bc} \nabla_a \pscale  Z^b_{\st B} X^A  + \pscale^{-1} X^A D_B{\pscale}
\end{align*}
which is equivalent to $\zeta^{ac} \xi_{bc} = \delta^a_{\st b}$.

\item The expressions that we derived for the connection form and the components of $H^{\ei{A}\ei{B}}$ only involve $\pscale$ and $\zeta_{ab}$.  Since $H^{AB}$ is also completely determined by $\zeta$, we conclude that the structure is completely determined by $\pscale$ and $\zeta_{ab}$ on the $\Qint$.
\end{enumerate}
\end{proof}

Clearly, the formulae in Theorem \ref{Theorem: UniquenessOnInside} do not extend to the boundary and hence do not teach us anything about how such a structure should look there. This will be important when studying the question of existence of these structures in Section~\ref{ssec:existence}. In the next section, we shall devise a natural way to fix the gauge at $\mathcal{H}^\pm$, however, we will observe that it implicitly requires an additional non-canonical choice, that we will later understand to be a section of the extended boundary $\eB$.

\subsection{Projective compactification}\label{ssec:AbstractDiscussionProjectiveCompactification}
The aim of this section is to construct a distinguished gauge at boundary points; throughout this section we will only be interested in boundary points \emph{away} from null infinity $\mathcal{H}^0 = \B \cap  \mathcal{Z}(\bar \lambda)$, where conformal geometry and conformal compactness provide the relevant tools, and concentrate on $\mathcal{H}^\pm$ where $\bar \lambda\neq 0$. 

In Theorem~\ref{Theorem: UniquenessOnInside}, it was established that there was a uniquely determined gauge on $\Qint$ such that the gauge dependent fields, 
\begin{align*}
\lambda_A&:=\varphi_{AB}X^B, & \lambda&:=\varphi_{AB}X^B X^A,
\end{align*}
vanish entirely. This gauge is however singular at points where $\hat{\sigma}=0$: this is because $\lambda$ becomes gauge invariant along the boundary, see Eq. \eqref{GaugeChangeLambdaBar}, and by definition cannot vanish on $\mathcal{H}^{\pm}$.  We are going to use $\lambda_A$ and $\bar\lambda$ to define alternative gauges, better suited than the metric gauge near boundary points of $\mathcal{H}^\pm$.

Our first observation is:
\begin{lemm}\label{Lemma: BoundaryGaugeStep1}
For any everywhere regular gauge $L_\mathcal{A}$ there is, at points where $\lambda \neq0 $, a \underline{unique} projective scale $\bnabla_a \in [\bnabla_a]$ such that
\begin{equation}\label{Definition: Boundary gauge step 1}
	\lambda_A  = \bar \lambda\; Y_A.
\end{equation}
In order to preserve this condition under a change of gauge $L_\ei{A} \mapsto L_\ei{A}+\chi_A\Pi^A_{\st\ei{A}}$, then one must perform simultaneously a change of projective scale $\bnabla_a \mapsto \bnabla_a +\Upsilon_a$ such that
\begin{equation}\label{MaintainingTheCondition}
	\Upsilon_a = \frac{\chi_0\nabla_a \pscale + \pscale\chi_a}{ \bar\lambda -2\chi_0 \pscale } \end{equation}
where $\chi_A = \chi_0Y_A+\chi_aZ^a_{\st A}$ in the original projective scale.
\end{lemm}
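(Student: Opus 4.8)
The statement has two parts: existence and uniqueness of a scale realizing \eqref{Definition: Boundary gauge step 1}, and the compatibility condition \eqref{MaintainingTheCondition} relating changes of gauge to changes of scale. For the first part, the plan is to start from an arbitrary regular gauge $L_{\ei{A}}$ with its associated $\lambda_A$, $\bar\lambda$ (as in Proposition~\ref{Definition: sExtendedMetricComponents in a scale}) and a choice of scale $\nabla$ in the projective class. Under a change of projective scale $\bnabla = \nabla + \Upsilon$, the splitting operators transform by \eqref{ChangeOfProjectiveSplitting}, so $Y_A \mapsto Y_A - \Upsilon_a Z^a_A$; and crucially $\lambda_A = \varphi_{AB}X^B$ is a genuine projective tractor of weight $1$, hence $\lambda_A$ itself does not change under a change of scale (only its component decomposition does). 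So I would write $\lambda_A = \ell\, Y_A + \ell_a Z^a_A$ in the scale $\nabla$, where $\ell = \lambda_A X^A = \bar\lambda$ (this identity is just the definition of $\bar\lambda$ in \eqref{ExtendedMetricComponents in a scale}) and $\ell_a$ is the ``$T^*M(1)$-part.'' The condition $\lambda_A = \bar\lambda\, \tilde Y_A$ in the new scale $\bnabla$ then reads, after substituting $\tilde Y_A = Y_A - \Upsilon_a Z^a_A$ and matching $Z^a_A$-components, $\ell_a = -\bar\lambda\,\Upsilon_a$, i.e. $\Upsilon_a = -\ell_a/\bar\lambda$, which is well-defined precisely where $\bar\lambda = \lambda \neq 0$. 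This gives both existence and uniqueness in one stroke.

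For the second part, the plan is to track how $\lambda_A$ responds to a change of gauge. From Proposition~\ref{Definition: sExtendedMetricComponents} and its scaled counterpart, under $L_{\ei{A}} \mapsto L_{\ei{A}} + \chi_A \Pi^A_{\st\ei{A}}$ one has $\varphi_{AB} \mapsto \varphi_{AB} - 2\chi_{(A}I_{B)}$ (this is the transformation rule used in the proof of Theorem~\ref{Theorem: UniquenessOnInside}), hence
\[
\lambda_A = \varphi_{AB}X^B \;\longmapsto\; \varphi_{AB}X^B - \chi_A\pscale - (\chi_B X^B)I_A = \lambda_A - \pscale\,\chi_A - (\chi_B X^B)\,I_A,
\]
and, using $I_A = \pscale Y_A + \nabla_a\pscale\, Z^a_A$ together with $\chi_B X^B = \chi_0\pscale$, also $\bar\lambda \mapsto \bar\lambda - 2\pscale\chi_0\pscale$ wait---more carefully, $\bar\lambda = \lambda_A X^A \mapsto \bar\lambda - \pscale(\chi_A X^A) - (\chi_B X^B)\pscale = \bar\lambda - 2\pscale\chi_0\pscale$; but actually $\chi_A X^A = \chi_0 \pscale$, so this is $\bar\lambda - 2\chi_0\pscale^2$. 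Hmm, but \eqref{GaugeChangeLambdaBar} says $\bar\lambda \mapsto \bar\lambda - 2\pscale\chi_A X^A = \bar\lambda - 2\pscale\chi_0\pscale$, consistent. The point is then to demand that the new gauge-field $\lambda_A^{\mathrm{new}}$ still be proportional to the new $Y$-operator $\tilde Y_A$ with proportionality constant the new $\bar\lambda^{\mathrm{new}}$, i.e. $\lambda_A^{\mathrm{new}} = \bar\lambda^{\mathrm{new}}\,\tilde Y_A$ where $\tilde Y_A = Y_A - \Upsilon_a Z^a_A$. Expanding $\lambda_A^{\mathrm{new}}$ in the old scale using $\lambda_A = \bar\lambda\, Y_A$ (the starting condition), one gets an explicit expression whose $Y_A$- and $Z^a_A$-components must separately match $\bar\lambda^{\mathrm{new}}(Y_A - \Upsilon_a Z^a_A)$. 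Equating $Y$-components is automatic (it just recovers the formula for $\bar\lambda^{\mathrm{new}}$); equating $Z^a_A$-components yields $\bar\lambda^{\mathrm{new}}\,\Upsilon_a = \pscale\chi_a + \chi_0\,\nabla_a\pscale$, and since $\bar\lambda^{\mathrm{new}} = \bar\lambda - 2\chi_0\pscale$, this is exactly \eqref{MaintainingTheCondition}.

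The computations are all linear-algebraic manipulations of the splitting operators $X,Y,Z,W$ via the transformation rules \eqref{ChangeOfProjectiveSplitting} and the decompositions \eqref{ExtendedMetricComponents}, \eqref{ExtendedMetricComponents in a scale}; no differential identities are needed. The main thing to be careful about---and the only place a sign or factor can go wrong---is bookkeeping the two simultaneous transformations (scale \emph{and} gauge) and correctly using $I_A = \pscale Y_A + \nabla_a\pscale\,Z^a_A$ to convert the $\chi_A$-terms into $Y$- and $Z$-components; in particular one must remember that $I_A$ is a fixed (parallel) tractor, so its component split \emph{does} change with the scale, which is what couples $\chi_0$ to $\nabla_a\pscale$ in the numerator of \eqref{MaintainingTheCondition}. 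I would present the existence/uniqueness argument first as a short computation, then derive \eqref{MaintainingTheCondition} by the component-matching described above.
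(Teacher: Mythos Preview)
Your approach is correct and essentially identical to the paper's: both parts reduce to the observation that $\lambda_A=\varphi_{AB}X^B$ is a scale-invariant weight-$1$ cotractor whose $Z^a_A$-component can be killed by a unique choice of $\Upsilon_a$ when $\bar\lambda\neq 0$, and then tracking this through the gauge change $\varphi_{AB}\mapsto\varphi_{AB}-2\chi_{(A}I_{B)}$. The paper phrases the first part via $\lambda_A W^A_a\mapsto 0$ rather than your $\ell_a$, but this is the same component.

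One bookkeeping slip to clean up: you write $\chi_A X^A=\chi_0\pscale$, but in fact $X^AY_A=1$ (not $\pscale$), so $\chi_A X^A=\chi_0\in\E(1)$. With this correction the computation
\[
\lambda_A^{\mathrm{new}}=\bar\lambda Y_A-\pscale(\chi_0 Y_A+\chi_a Z^a_A)-\chi_0(\pscale Y_A+\nabla_a\pscale\,Z^a_A)=(\bar\lambda-2\chi_0\pscale)Y_A-(\pscale\chi_a+\chi_0\nabla_a\pscale)Z^a_A
\]
goes through cleanly and gives exactly the $\bar\lambda^{\mathrm{new}}=\bar\lambda-2\chi_0\pscale$ and \eqref{MaintainingTheCondition} that you state at the end; your final conclusions are unaffected.
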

\begin{proof}
Let at first $\bnabla_a$ be an arbitrary projective scale. We shall show that there exists a unique $1$-form $\Upsilon_a$ such that $\nabla_a +\Upsilon_a$ satisfies the required condition.
Using Equation~\eqref{ChangeOfProjectiveSplitting}, $\Upsilon_a$ must solve:
\[ \lambda_A W^{\st A}_{a} \quad \mapsto \quad 0= \lambda_AW^{\st A}_{a} + \Upsilon_a\, \varphi_{AB}X^BX^A \]
which has a unique solution 
\begin{equation}\label{Proof, boundary gauge: change of scale}
\Upsilon_a = -\bar\lambda^{-1}\lambda_AW^A_a.
\end{equation} 

In order to prove~\eqref{MaintainingTheCondition}, we need to study how $\lambda_A W^A_a$ changes under a simultaneous change of gauge $L_\ei{A} \mapsto L_\ei{A}+\chi_A\Pi^A_{\st\ei{A}}$ and projective connection $\nabla \mapsto \nabla + \Upsilon$. 

Under a change of gauge $\varphi_{AB}  \mapsto \varphi_{AB} - 2\chi_{(A}I_{B)}$ and thus
\begin{align*}
	\bar \lambda &\mapsto \bar \lambda - 2\chi_0 \pscale, & 0= \lambda_A W^{\st A}_{a} \mapsto -\chi_0 \nabla_a \pscale -\pscale \chi_a. 
\end{align*}
It then follows from \eqref{Proof, boundary gauge: change of scale} that we now need to make the change of scale
 \[ \Upsilon_a = -\big( \bar \lambda - 2\chi_0 \pscale \big)^{-1} \; \big(  -\chi_0 \nabla_a \pscale - \pscale \chi_a\big)\]
to preserve the condition $\lambda_A W^{\st A}_{a} =0$.
\end{proof}

\begin{rema}
Observe that if a gauge is regular at points of $\mathcal{H}^0 :=\B\cap \mathcal{Z}(\bar\lambda)$, then the scale $\bnabla$ given by the proposition cannot extend there. Indeed, if it could, then the condition $\lambda_A=\bar\lambda Y_A$ would also extend by continuity. Plugging this into the second equation of~\eqref{Extended metric expression as a function of lambda}, we would have $\pscale J^A \to X^A$ at these points, but this can only happen if $J^A \sim \pscale^{-1}X^A$, contradicting the regularity of $H^{\ei{A}\ei{B}}$. More generally, if any gauge and scale $(L_\mathcal{A},\nabla_a)$ are such that together $\lambda_A=\bar\lambda Y_A$ holds then at least one of them must be singular near points of $\mathcal{H}^0$. 
\end{rema}

We recall that, in any scale $\bnabla$, $H^{AB}$ must be of the form \eqref{TractorMetricBoundaryScale}. The choice of scale \eqref{Definition: Boundary gauge step 1} will allow to have similar expressions for the extended metric.
\begin{prop}\label{Proposition: Extended metric expression in gauge1}
In a choice of gauge and scale $(L_{\ei{a}}, \nabla)$ satisfying  \eqref{Definition: Boundary gauge step 1}, and therefore excluding points of $\mathcal{H}^0$, the components \eqref{ExtendedMetricComponents} of the extended metric $H^{\ei{a}\ei{b}}$ read
 \begin{align}\label{Extended metric expression as a function of lambda, in boudary scale1}
	\begin{alignedat}{3}
		\varphi_{AC}&= \left(\bar\lambda P_{ca} -\frac{1}{2}\pscale^{-1}\nabla_{c}\bar\lambda \nabla_{a} \pscale -\pscale \upsilon_{Bc}W^{B}_a \right)\; Z^a_{\st A}Z^c_{\st C} +\bar\lambda \;  Y_AY_B,\\[0.75em]
		 J^A & = \left( \frac{\bar \lambda}{\pscale^2} \zeta^{ab}\bnabla_b\pscale \right)  \;W^{A}_{a}+ \pscale^{-1}\left(1 - \frac{\bar \lambda}{\pscale^2}\zeta^{ab}\bnabla_a\pscale\bnabla_b\pscale \right) \;X^A ,\\[0.75em]
		f& = - \frac{\bar \lambda}{\pscale^2}\left(  1   -   \frac{\bar \lambda}{\pscale^2} \; \zeta^{ab} \nabla_a \pscale \nabla_b \pscale \right).
	\end{alignedat}
\end{align}
In particular, since the extended metric is finite along the boundary, we recover (see the discussion around \eqref{Projective compactification review: N and nu def}), that the vector field $N^a$ and density $\nu$
\begin{align*}
	N^a &=  \pscale^{-2} \zeta^{ab}\bnabla_b\pscale, & \nu&= N^a \bnabla_a \pscale,
\end{align*}
extend smoothly to points of $\B$. On top of this, we obtain that the restriction of $\nu^{-1}$ and $\bar \lambda$ must coincide at the boundary to give the preferred boundary density
\begin{equation}
 \lambda_0 = \iota^* \nu^{-1} = \iota^* \bar \lambda.
\end{equation}
Finally, the component $\upsilon_{Ab}$ of the connection must satisfy
\begin{equation}
	\upsilon_{Cb} X^C = \pscale^{-1}\,\frac{1}{2} \,\nabla_b \bar \lambda.
\end{equation}
and since it must be finite this means that our scales $\nabla$ satisfy $\iota^* \nabla_c \bar \lambda =0$. In particular, by Theorem \ref{Projective compactification review: CapGoverBoundaryDefiningFunction}, along the boundary, it is a special scale defined by the density $ \sqrt{\lvert \lambda_0\rvert} = \lvert\detd \bar h\rvert^\frac{1}{2}$.
\end{prop}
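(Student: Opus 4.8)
The plan is to insert the defining relation $\lambda_A = \bar\lambda\, Y_A$ of the chosen gauge/scale pair into the universal formulae of Proposition~\ref{Definition: sExtendedMetricComponents in a scale} together with the expression \eqref{TractorMetricBoundaryScale} for $H^{AB}$ in an arbitrary scale, and to read off the components of $H^{\ei{A}\ei{B}}$ by expanding along the splitting operators $X^A, W^A_a, Y_A, Z^a_A$ and using the standard pairings between them. Since, by Lemma~\ref{Lemma: BoundaryGaugeStep1}, the underlying gauge is everywhere regular, the resulting quantities $\varphi_{AB}$, $J^A$, $f$ and the connection coefficient $\upsilon_{Bc}$ are smooth up to $\B$, and the final assertions of the proposition are precisely the restrictions that this smoothness imposes on the explicit formulae obtained.

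The first and organisationally crucial step is to isolate the $X$-component of $\upsilon$. Contracting the identity $\nabla_c\varphi_{AB} = 2\upsilon_{c(A}I_{B)}$ of Proposition~\ref{PropertiesExtendedMetricComponents} with $X^AX^B$, using $\nabla_c X^A = W^A_c$ from \eqref{TractorConnection}, $\varphi_{AB}X^B = \lambda_A = \bar\lambda\, Y_A$, and $Y_A W^A_c = 0$, the left-hand side collapses to $\nabla_c\bar\lambda$ while the right-hand side equals $2\pscale\,\upsilon_{cA}X^A$; hence $\upsilon_{Cb}X^C = \frac{1}{2}\pscale^{-1}\nabla_b\bar\lambda$.

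Next I would compute $J^A$ and $f$ from $\pscale J^A = X^A - H^{AB}\lambda_B$ and $\pscale^2 f = -\bar\lambda + H^{AB}\lambda_A\lambda_B$: since $\lambda_B = \bar\lambda Y_B$ these reduce to evaluating $H^{AB}Y_B$, which by \eqref{TractorMetricBoundaryScale} and the pairings $X^AY_A = 1$, $W^A_a Y_A = 0$, $X^A Z^a_A = 0$ equals $-\pscale^{-1}\zeta^{ab}\nabla_b\pscale\, W^A_a + \pscale^{-2}\zeta^{ab}\nabla_a\pscale\nabla_b\pscale\, X^A$; dividing by $\pscale$, resp.\ $\pscale^2$, yields the stated expressions. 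For $\varphi_{AB}$ I would apply the Leibniz rule for $D$ (valid, see after \eqref{Definition: Thomas operator}) to $D_B(\bar\lambda Y_A)$, using $D_B\bar\lambda = 2\bar\lambda Y_B + \nabla_b\bar\lambda Z^b_B$ (weight $2$) and $D_B Y_A = -Y_AY_B + P_{ba}Z^a_A Z^b_B$ (from $\nabla_b Y_A = P_{bc}Z^c_A$, weight $-1$), then expand the remaining terms $-\pscale\upsilon_{Ab}Z^b_B - \upsilon_{Cb}Z^b_B X^C I_A$ of Proposition~\ref{Definition: sExtendedMetricComponents in a scale} by decomposing $\upsilon_{Ab}$ into its $Y_A$-part $\frac{1}{2}\pscale^{-1}\nabla_b\bar\lambda$ and its $Z^a_A$-part $\upsilon_{Cb}W^C_a$, and inserting $I_A = \pscale Y_A + \nabla_a\pscale Z^a_A$. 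The coefficient of $Y_A Z^b_B$ then cancels identically ($\nabla_b\bar\lambda - \frac{1}{2}\nabla_b\bar\lambda - \frac{1}{2}\nabla_b\bar\lambda = 0$), leaving exactly \eqref{Extended metric expression as a function of lambda, in boudary scale1}.

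Finally I would draw the consequences from smoothness at $\B$. Smoothness of the $W^A_a$-component of $J^A$, whose coefficient is $\bar\lambda N^a$ with $\bar\lambda$ smooth and nonvanishing near $\mathcal{H}^\pm$, forces $N^a = \pscale^{-2}\zeta^{ab}\nabla_b\pscale$ to extend smoothly to $\B$; smoothness of the $X^A$-component, $\pscale^{-1}(1 - \bar\lambda\nu)$, forces $1 - \bar\lambda\nu = O(\pscale)$, so $\nu$ extends smoothly with $\iota^*\nu = (\iota^*\bar\lambda)^{-1}$, i.e.\ $\iota^*\nu^{-1} = \iota^*\bar\lambda =: \lambda_0$. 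Smoothness of $\upsilon_{Cb}X^C = \frac{1}{2}\pscale^{-1}\nabla_b\bar\lambda$ forces $\iota^*\nabla_c\bar\lambda = 0$; combined with $\lambda_0 = \iota^*\bar\lambda$ and the identification \eqref{RestrictionsOfDensitiesAreIdentifiedWithDensitiesOnBoundary}, this says $\nabla$ induces on $\B$ a connection preserving $\lambda_0$, which — given that $N^a$ has just been shown to extend — is exactly the characterisation in Theorem~\ref{Projective compactification review: CapGoverBoundaryDefiningFunction} of the special scale on $\B$ determined by $\sqrt{|\lambda_0|} = |\detd\bdh|^{\frac{1}{2}}$. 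The only real difficulty is bookkeeping — tracking the four tractor directions and the gauge-dependent $\upsilon_{Bc}$ — for which pinning down $\upsilon_{Cb}X^C$ before computing $\varphi_{AB}$ is the key simplification.
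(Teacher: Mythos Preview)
Your proposal is correct and follows essentially the same route as the paper: plug $\lambda_A=\bar\lambda Y_A$ into the formulae of Proposition~\ref{Definition: sExtendedMetricComponents in a scale}, evaluate $H^{AB}Y_B$ from \eqref{TractorMetricBoundaryScale} to obtain $J^A$ and $f$, expand $D_B(\bar\lambda Y_A)$ via the Leibniz rule and the tractor connection to obtain $\varphi_{AB}$, and then read off the smoothness constraints. The only cosmetic difference is that you extract $\upsilon_{Cb}X^C$ first, by contracting $\nabla_c\varphi_{AB}=2\upsilon_{c(A}I_{B)}$ with $X^AX^B$, whereas the paper obtains it afterwards from the condition $\varphi_{AB}X^B=\bar\lambda Y_A$ applied to the already-expanded $\varphi_{AB}$; the two derivations are equivalent and your ordering indeed makes the $Y_AZ^b_B$ cancellation in $\varphi_{AB}$ cleaner.
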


\begin{proof}
Taking \eqref{Extended metric expression as a function of lambda} from Proposition \ref{Definition: sExtendedMetricComponents in a scale} gives, making use of our choice of scales satisfying \eqref{Definition: Boundary gauge step 1},
\begin{align*}
		\varphi_{AB}&=   D_B ( Y_A \bar \lambda)- \pscale\upsilon_{Ab}Z^b{}_B - \upsilon_{Cb}Z^b{}_B X^C I_A,\\[0.4em]
		\pscale J^A & = X^A - \bar \lambda \,H^{AB} Y_B,\\[0.4em]
		\pscale^{2}f& = - \bar \lambda    + \bar \lambda^2 \, H^{AB} Y_AY_B.
\end{align*}
Making use of \eqref{TractorMetricBoundaryScale} one then obtains the desired expressions for $J^A$ and $f$. Appealing to the definitions \eqref{Definition: Thomas operator} and \eqref{TractorConnection} for the Thomas $D$-operator and the tractor connection, the expression of $\varphi_{AB}$ can be rewritten as
\begin{align*}
	\varphi_{AB} 	&=\bar \lambda\;  Y_A Y_B  + \nabla_b \bar \lambda \;Y_A Z^b_{\st B}  +  \bar \lambda \, P_{ab}Z^a_{\st A} Z^b_{\st B} - \pscale\upsilon_{Ab}Z^b_{\st B} - \upsilon_{Cb}Z^b_{\st B} X^C I_A
\end{align*}
since, in our scale, $\varphi_{AB} X^B = \bar \lambda Y_A$, this implies that 
\begin{equation*}
	\upsilon_{Cb} X^C = \pscale^{-1}\,\frac{1}{2} \,\nabla_b \bar \lambda
\end{equation*}
and therefore
\begin{align*}
	\varphi_{AB} W^A_{a}  W^B_{b}	&=\bar \lambda \, P_{ab} - \pscale\upsilon_{Ab}W^A_{a}- \pscale^{-1} \frac{1}{2} \nabla_b \bar \lambda \nabla_b \pscale.
\end{align*}
\end{proof}

This Proposition means that the extended metric is entirely parametrised by $\zeta^{ab}$, $\pscale$, $\bar \lambda$ and the coefficient $\upsilon_{Ab}$ of the connection. We will now impose a condition on the gauge, which will leave us with a very small amount of freedom:
\begin{prop}\label{Prop: BoundaryGaugeStep2}
	Let $x_0$ be a point which does not belong to $\mathcal{H}^0$. Then there always exists a neighbourhood of $x_0$ and local choice of gauge $L_\ei{A}$ such that
	\begin{equation}\label{Definition: Boundary gauge step 2}
	f := H^{\ei{a}\ei{b}} L_{\ei{a}} L_{\ei{b}} =0.
\end{equation}
\end{prop}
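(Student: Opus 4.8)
The plan is to realise the condition $f = H^{\ei{A}\ei{B}}L_{\ei{A}}L_{\ei{B}} = 0$ by running a change of gauge $L_{\ei{A}} \mapsto \hat L_{\ei{A}} = L_{\ei{A}} + \chi_A \Pi^A_{\st\ei{A}}$ and tracking how $f$ transforms. Using the decomposition \eqref{ExtendedMetricComponents} together with the relations \eqref{AlmostInverseProperty + LinkfJvPhi}, a direct substitution of $\hat L_{\ei{A}}$ into $H^{\ei{A}\ei{B}}\hat L_{\ei{A}}\hat L_{\ei{B}}$ shows that $f$ transforms as
\begin{equation*}
\hat f = f + 2\chi_A J^A + \chi_A\chi_B H^{AB}.
\end{equation*}
So achieving $\hat f = 0$ is equivalent to solving the scalar quadratic equation $\chi_A\chi_B H^{AB} + 2\chi_A J^A + f = 0$ for a cotractor $\chi_A$. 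Since $H^{AB}$ is degenerate with kernel $I_A$ (and $J^C I_C = 1$), it is natural to look for $\chi_A$ of the special form $\chi_A = t\, I_A$ for a scalar function $t$: then $\chi_A\chi_B H^{AB} = 0$ because $H^{AB}I_B = 0$, and $\chi_A J^A = t\, I_A J^A = t$, so the equation collapses to the \emph{linear} equation $2t + f = 0$, uniquely solved by $t = -\tfrac{1}{2} f$. Hence $\chi_A = -\tfrac{1}{2} f\, I_A$ does the job, and this is manifestly smooth in a neighbourhood of $x_0$ wherever $f$ is smooth.

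The one thing that needs care is the hypothesis $x_0 \notin \mathcal{H}^0$: we must start from a gauge in which $f$ is actually \emph{finite} near $x_0$, so that $\chi_A = -\tfrac12 f I_A$ is a well-defined smooth cotractor. To get such a starting gauge I would invoke the results already established: away from $\mathcal{H}^0$ one can choose a gauge and scale satisfying \eqref{Definition: Boundary gauge step 1} (Lemma~\ref{Lemma: BoundaryGaugeStep1}), and in that gauge Proposition~\ref{Proposition: Extended metric expression in gauge1} gives the explicit expression $f = -\tfrac{\bar\lambda}{\pscale^2}\bigl(1 - \tfrac{\bar\lambda}{\pscale^2}\zeta^{ab}\nabla_a\pscale\,\nabla_b\pscale\bigr)$; combined with the finiteness of $N^a$ and $\nu$ at the boundary established there, this $f$ extends smoothly to a neighbourhood of $x_0$ in $\B \setminus \mathcal{H}^0$ (on $\Qint$ it is automatically smooth). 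Alternatively, one can simply note that the metric gauge of Theorem~\ref{Theorem: UniquenessOnInside} already has $\hat f = 0$ on $\Qint$, and that \emph{any} everywhere-regular gauge $L_{\ei{A}}$ — whose existence is part of the standing hypothesis that $H^{\ei{A}\ei{B}}$ is a genuine (smooth, non-degenerate) parallel section — has $f = H^{\ei{A}\ei{B}}L_{\ei{A}}L_{\ei{B}}$ smooth everywhere, in particular near $x_0$.

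With a smooth $f$ in hand on a neighbourhood $U$ of $x_0$, define $\chi_A := -\tfrac{1}{2} f\, I_A$ on $U$ and set $\hat L_{\ei{A}} := L_{\ei{A}} + \chi_A \Pi^A_{\st\ei{A}}$. The computation above gives $H^{\ei{A}\ei{B}}\hat L_{\ei{A}}\hat L_{\ei{B}} = f + 2\chi_A J^A + \chi_A\chi_B H^{AB} = f - f + 0 = 0$, which is \eqref{Definition: Boundary gauge step 2}. I do not expect any serious obstacle here: the degeneracy $H^{AB}I_B = 0$ is exactly what linearises the a priori quadratic gauge equation, so the only genuine content is the bookkeeping that produces the transformation law for $f$ and the (already available) finiteness of $f$ near $x_0$ off $\mathcal{H}^0$. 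One may additionally remark — as a sanity check and possibly worth stating — that this $\hat L_{\ei{A}}$ need not preserve the earlier condition \eqref{Definition: Boundary gauge step 1}; reconciling the two simultaneously is the subject of the discussion that follows this proposition.
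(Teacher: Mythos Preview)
Your proof is correct and takes a genuinely different, more elegant route than the paper. The paper works in a gauge/scale satisfying \eqref{Definition: Boundary gauge step 1}, chooses the ansatz $\chi_0=0$, $\chi_a=\chi\,\nabla_a\pscale$, and solves the resulting honest quadratic $\pscale^{2}\nu\,\chi^{2}+2\bar\lambda\nu\,\chi+f=0$ via its discriminant (this is where $\bar\lambda\neq 0$ enters). Your observation that $\chi_A=tI_A$ kills the quadratic term because $H^{AB}I_B=0$, reducing everything to $2t+f=0$, is cleaner; starting from \emph{any} regular gauge it gives a smooth $\chi_A=-\tfrac12 f\,I_A$, and in fact your argument does not even use the hypothesis $x_0\notin\mathcal{H}^0$.

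The paper's more laborious ansatz is not wasted, however: its proof yields the explicit equation \eqref{Proof of the f=0 gauge. Achieving the condition}, which is quoted verbatim in deriving \eqref{Change of gauge conditions}, and the feature $\chi_0=0$ (hence $\bar\lambda$ unchanged) is invoked in the proof of Proposition~\ref{Proposition: existence and unicity of the gauge} to modify the gauge to $f=0$ \emph{while preserving} $\bar\lambda=\nu^{-1}$. Your gauge change has $\chi_0=-\tfrac12 f\pscale$, so $\bar\lambda\mapsto\bar\lambda+f\pscale^{2}$; this still agrees with $\nu^{-1}$ at the boundary but not identically, so if you adopt your argument here you would need to adjust those later passages slightly. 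Your closing remark that reconciling $f=0$ with \eqref{Definition: Boundary gauge step 1} is deferred is exactly right.
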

\begin{proof}\mbox{}
	
	If $x_0$ is in the interior $\Mint$, one can always pick the metric gauge (see Theorem \ref{Theorem: UniquenessOnInside}).
	
	If $x_0$ is in $\mathcal{H}^{\pm}$ then, since $\bar\lambda\neq 0$ at $x_0$, we may assume, restricting if necessary to a smaller neighbourhood, that $\bar\lambda \neq 0$.	
The expression for the gauge shows that what we are trying to achieve only depends on a choice of gauge and does not imply anything on the choice of scale: it is therefore consistent to require \eqref{Definition: Boundary gauge step 1}.  We now consider the change of gauge
	 \begin{align*}
	 	H^{\ei{a}\ei{b}} L_{\ei{a}} L_{\ei{b}}\qquad  \mapsto\quad 0&=H^{\ei{a}\ei{b}} \tilde L_{\ei{a}} \tilde L_{\ei{b}}\\
	 	&= H^{\ei{a}\ei{b}}(L_{\ei{a}}+\chi_A\Pi^A_\ei{a})(L_{\ei{b}}+\chi_B\Pi^B_\ei{b})\\
	 	&=f + H^{AB} \chi_A \chi_B +  2 J^B \chi_B.
	 \end{align*}
	Making use of \eqref{Extended metric expression as a function of lambda, in boudary scale1} and the expression \eqref{Projective compactification review: N and nu def} for $N$ and $\nu$, this can be rewritten as
		\begin{align}\label{Proof of the f=0 gauge. Achieving the condition}
		0	&= f + \chi_0^2 \nu + \zeta^{ab}\chi_a \chi_b - 2 \pscale \chi_0 N^a \chi_a +2\bar \lambda N^a \chi_a + 2\pscale^{-1}  \chi_0 (1 - \bar \lambda \nu ) .
	\end{align}
	Since $\bar\lambda\neq 0$ note that $\pscale^{-1}(1-\bar\lambda\nu)=-\pscale f \bar\lambda^{-1}$, so:
	\begin{align*}
		0	&= f + \chi_0^2 \nu + \zeta^{ab}\chi_a \chi_b - 2 \pscale \chi_0 N^a \chi_a +2\bar \lambda N^a \chi_a - 2\bar\lambda^{-1}\pscale \chi_0 f  .
	\end{align*}
	 We only need to prove that a solution to this equation exists; let us take as an ansatz $\chi_0=0$ and $\chi_a = \chi \nabla_{a}\pscale$ we find:
	 		\begin{align*}
	 	0	&= f  + \chi^2 \zeta^{ab} \nabla_{a}\pscale \nabla_{b}\pscale  +2\chi N^a \nabla_{a}\pscale   \bar \lambda\\
	 	&= \chi^2\; \pscale^{2}  \nu +\chi\; 2 \bar \lambda \nu +f.
	 \end{align*}
	 	 Now the (reduced) discriminant of this quadratic equation is $\bar\lambda^2\nu^2 - \pscale^2 f\nu$: it must be positive in a neighbourhood of the boundary $\pscale =0$ and therefore one can always achieve the gauge with a term of the form
	 \begin{align}
	 	\chi &=  -\frac{\bar \lambda}{\pscale^2}\left( 1- \sqrt{1-f\bar\lambda^{-2}\pscale^2\nu^{-1}} \right) =-\frac{1}{2} f\, \bar \lambda ^{-1}\nu^{-1} + O(\pscale).
	 \end{align}
\end{proof}

\begin{rema}
We point out that there is a slight subtlety at this point, it is not clear from the above that there is a gauge and \emph{special} projective scale in which we have these properties. In other words, there may be no density $\tau$ for which $\bnabla_a \tau=0$. \end{rema}

We shall now require both \eqref{Definition: Boundary gauge step 1} and \eqref{Definition: Boundary gauge step 2} and draw the consequences of these choices. We recall that $N^a$, $\nu$ 
 are given by the expressions \eqref{Projective compactification review: N and nu def}. 
\begin{prop}\label{Proposition: Extended metric expression in gauge2}
	Let $x_0$ be a point away from $\mathcal{H}^0$. Let $(L_{\ei{a}}, \nabla)$ be a choice of gauge and scale in a neighbourhood of $x_0$ satisfying both \eqref{Definition: Boundary gauge step 1} and \eqref{Definition: Boundary gauge step 2}. Then, the components \eqref{ExtendedMetricComponents} of the extended metric $H^{\ei{a}\ei{b}}$ satisfy
		\begin{align}\label{Extended metric expression, in boudary scale 2 a)}
			f& =0, & J^A &= \bar  \lambda N^a\, W_a^A + \pscale^{-1}(1-\bar\lambda \nu)X^A, & \bar \lambda (1- \bar \lambda \nu) &= 0,
	\end{align}
	\begin{align}\label{Extended metric expression as a function of lambda, in boudary scale 2 b)}
			\varphi_{AB}&= \left( \zeta_{ab} - \bar \lambda \pscale^{-2} \bnabla_a \pscale \bnabla_b \pscale\right)\; Z^a_{\st A}Z^b_{\st B} +\bar \lambda\;  Y_AY_B.
	\end{align}
	In particular, $q_{ab}:=\zeta_{ab} - \bar \lambda \pscale^{-2} \bnabla_a \pscale \bnabla_b \pscale$ extends smoothly to points of $\B$ in this neighbourhood. What is more, it also follows from \eqref{Definition: Boundary gauge step 1} and \eqref{Definition: Boundary gauge step 2} that, when $\bar\lambda \neq 0$, $N^a q_{ab}=0$ . 
	Finally the components $\upsilon_{Ab}$ of the connection are given by
	\begin{align}\label{Extended connection expression in boudary scale 2}\begin{aligned}\upsilon_{Ab} X^A &= \pscale^{-1}\,\frac{1}{2} \,\nabla_b \bar \lambda, \\[0.7em]  \upsilon_{Ab} W^A{}_a &= -\pscale^{-1} \Big( q_{ab} -\bar \lambda P_{ab}  + \frac{1}{2}\pscale^{-1}\nabla_a \bar \lambda \nabla_b\pscale  \Big).
		\end{aligned}
	\end{align}
In particular, since the extended connection is finite along the boundary, we recover, at points where $\bar \lambda \neq0$, the identities \eqref{Projective compactification review: identities on schouten} on the projective Schouten tensor.
\end{prop}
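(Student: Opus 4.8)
The proposition is essentially a corollary of Proposition~\ref{Proposition: Extended metric expression in gauge1} once the extra gauge condition \eqref{Definition: Boundary gauge step 2}, $f=0$, is imposed. First I would rewrite the three lines of \eqref{Extended metric expression as a function of lambda, in boudary scale1} using the quantities $N^a$, $\nu$ of \eqref{Projective compactification review: N and nu def}, obtaining $J^A = \bar\lambda\,N^a W^A_a + \pscale^{-1}(1-\bar\lambda\nu)X^A$ and $f = -\pscale^{-2}\bar\lambda(1-\bar\lambda\nu)$. Since $f$ vanishes identically on the chosen neighbourhood, this gives $\bar\lambda(1-\bar\lambda\nu) = -\pscale^2 f = 0$ there, which is the third relation of \eqref{Extended metric expression, in boudary scale 2 a)}; as we work away from $\mathcal{H}^0$, so that $\bar\lambda \neq 0$, this forces $\bar\lambda\nu = 1$ and hence $J^A = \bar\lambda\,N^a W^A_a$, the second relation.

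For $\varphi_{AB}$ the cleanest route is to guess the answer and verify it is forced. One proposes $\varphi_{AB} = q_{ab}\,Z^a_{\st A}Z^b_{\st B} + \bar\lambda\,Y_A Y_B$ with $q_{ab} = \zeta_{ab} - \bar\lambda\,\pscale^{-2}\nabla_a\pscale\nabla_b\pscale$, and checks it meets the structural constraints of Definition~\ref{Definition: sExtendedMetricComponents}: using \eqref{TractorMetricBoundaryScale} for $H^{AB}$ (also rewritten via $N^a,\nu$) together with the elementary identities $\zeta^{ac}q_{cb} = \delta^a_b - \bar\lambda\,N^a\nabla_b\pscale$ and $N^a q_{ab} = \pscale^{-2}(1-\bar\lambda\nu)\nabla_b\pscale$, a short contraction yields $H^{AC}\varphi_{CB} + J^A I_B = \delta^A_B$; moreover $\varphi_{AB}X^B = \bar\lambda\,Y_A$ by inspection (the scale condition \eqref{Definition: Boundary gauge step 1}), and $J^B\varphi_{AB} = 0 = -fI_A$ because $\bar\lambda(1-\bar\lambda\nu)=0$. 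These constraints determine $\varphi_{AB}$: the kernel of $\xi_{C\cdot}\mapsto H^{AC}\xi_{C\cdot}$ restricted to symmetric forms is spanned by $I_AI_B$ (since $H^{AB}I_B=0$), and that one-parameter ambiguity is removed by $\varphi_{AB}X^B = \bar\lambda\,Y_A$ wherever $\pscale = X^AI_A \neq 0$, i.e. on $\Qint$; continuity then extends the identification to $\B$. Since $\varphi_{AB}$ is a component of the smooth, non-degenerate inverse $\Phi_{\ei{A}\ei{B}}$ of $H^{\ei{A}\ei{B}}$, its $Z^a_{\st A}Z^b_{\st B}$-component is smooth on $M$; as this component equals $\zeta_{ab} - \bar\lambda\,\pscale^{-2}\nabla_a\pscale\nabla_b\pscale$ on $\Qint$, that expression $q_{ab}$ extends smoothly to $\B$, and $N^a q_{ab} = \pscale^{-2}(1-\bar\lambda\nu)\nabla_b\pscale = 0$ wherever $\bar\lambda \neq 0$. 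Finally, comparing the $Z^a_{\st A}Z^b_{\st B}$-coefficient of this $\varphi_{AB}$ with the one displayed in \eqref{Extended metric expression as a function of lambda, in boudary scale1} solves for $\upsilon_{Ab}W^A{}_a$, giving \eqref{Extended connection expression in boudary scale 2}; the component $\upsilon_{Ab}X^A = \frac{1}{2}\pscale^{-1}\nabla_b\bar\lambda$ is already recorded in Proposition~\ref{Proposition: Extended metric expression in gauge1}.

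The last assertion is a boundary-limit argument. Finiteness of the extended connection at $\B$ forces $\pscale\,\upsilon_{Ab}W^A{}_a$ and $\pscale\,\upsilon_{Ab}X^A$ to vanish along $\B$, so substituting into \eqref{Extended connection expression in boudary scale 2} gives $\nabla_b\bar\lambda = O(\pscale)$ and $q_{ab} - \bar\lambda P_{ab} + \frac{1}{2}\pscale^{-1}\nabla_a\bar\lambda\nabla_b\pscale = O(\pscale)$. Using $\bar\lambda = \nu^{-1}$, the identity $\nabla_a\nu = -2\pscale\,N^bP_{ba}$ (the first line of \eqref{Projective compactification review: identities on schouten}, immediate from $\nabla_cH^{AB}=0$ and \eqref{TractorConnection}) to evaluate $\pscale^{-1}\nabla_a\bar\lambda$ on $\B$, and the known restriction of $q_{ab}$ to $\B$ from \eqref{Projective compactification review: asymptotic form of the metric}, the displayed relation rearranges exactly into the second line of \eqref{Projective compactification review: identities on schouten}.

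The main obstacle is organisational rather than conceptual: because $H^{AB}$ is degenerate with kernel $I_B$, the relation $H^{AC}\varphi_{CB} = \delta^A_B - J^A I_B$ does not on its own recover $\varphi_{AB}$, so one must combine it with the scale normalisation $\varphi_{AB}X^B = \bar\lambda\,Y_A$ and the gauge condition $f=0$ (equivalently $\bar\lambda\nu = 1$) to pin down both $\varphi_{AB}$ and $\upsilon_{Ab}W^A{}_a$; and throughout one has to keep track of which quantities are determined exactly on $\{\pscale=0\}$ and which only modulo $O(\pscale)$.
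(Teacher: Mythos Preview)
Your argument is correct and follows the paper's proof closely: both start from Proposition~\ref{Proposition: Extended metric expression in gauge1}, impose $f=0$, and then exploit the almost-inverse relations \eqref{AlmostInverseProperty + LinkfJvPhi}. The one genuine difference is the direction in which you handle $\varphi_{AB}$. The paper reads off from \eqref{Extended metric expression as a function of lambda, in boudary scale1} that $\varphi_{AB}=q_{ab}Z^a_{\st A}Z^b_{\st B}+\bar\lambda Y_AY_B$ with $q_{ab}$ \emph{defined} as $\bar\lambda P_{ba}-\tfrac{1}{2}\pscale^{-1}\nabla_b\bar\lambda\nabla_a\pscale-\pscale\,\upsilon_{Cb}W^C_a$, then contracts the first relation of \eqref{AlmostInverseProperty + LinkfJvPhi} with $W^B_bZ^a_{\st A}$ to obtain $\zeta^{ac}q_{cb}+\bar\lambda N^a\nabla_b\pscale=\delta^a_b$, which on $\Qint$ inverts to $q_{ab}=\zeta_{ab}-\bar\lambda\pscale^{-2}\nabla_a\pscale\nabla_b\pscale$; equating the two expressions for $q_{ab}$ yields $\upsilon_{Ab}W^A_a$. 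You instead guess the metric form of $q_{ab}$, verify that the resulting $\varphi_{AB}$ satisfies all of \eqref{AlmostInverseProperty + LinkfJvPhi}, and then invoke a uniqueness argument (kernel of $H^{AC}$ on symmetric tensors is $\R\,I_AI_B$, killed by $\varphi_{AB}X^B=\bar\lambda Y_A$ on $\Qint$, then continuity). This is valid but the uniqueness step is unnecessary: $\varphi_{AB}$ is by Definition~\ref{Definition: sExtendedMetricComponents} a component of the inverse $\Phi_{\ei{A}\ei{B}}$ of the non-degenerate $H^{\ei{A}\ei{B}}$, hence already uniquely determined by the gauge; the relations \eqref{AlmostInverseProperty + LinkfJvPhi} are consequences rather than a characterisation. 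The paper's route is therefore a little shorter and avoids the density-at-$\pscale=0$ argument. A minor caution: your line ``as we work away from $\mathcal{H}^0$, so that $\bar\lambda\neq 0$'' conflates the hypothesis on $x_0$ with a pointwise statement on the neighbourhood; the proposition's display retains the $\pscale^{-1}(1-\bar\lambda\nu)X^A$ term in $J^A$ precisely to cover the case $\bar\lambda=0$ (the metric gauge) as well.
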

\begin{proof}\mbox{}
	
	Equations \eqref{Extended metric expression, in boudary scale 2 a)} are directly obtained from \eqref{Extended metric expression as a function of lambda, in boudary scale1} under the assumption that $f=0$. It also implies \begin{equation*}
	\varphi_{AB}= q_{ab}\; Z^a_{\st A}Z^b_{\st B} +\bar \lambda \;  Y_AY_B,
	\end{equation*}
	where $q_{ab} = \left(\bar\lambda P_{ba} -\frac{1}{2}\pscale^{-1}\nabla_{b}\bar\lambda \nabla_{a} \pscale -\pscale \upsilon_{Cb}W^{C}_a \right)$. Then the first equation in \eqref{AlmostInverseProperty + LinkfJvPhi} gives, when contracted with $W^B{}_{b}Z^a_{\st A}$,
	\[ \zeta^{ac} q_{bc} + \bar\lambda N^a\bnabla_b\pscale = \delta^{a}_{\st b},\] 
	or equivalently at points where $\pscale\neq 0$
	\[ q_{ab} + \bar\lambda \pscale^{-2} \bnabla_a\pscale\bnabla_b\pscale= \zeta_{ab}.\]
 Therefore one has
	\begin{equation*}
	q_{ab} = \zeta_{ab} - \bar\lambda \pscale^{-2}\bnabla_a\pscale\bnabla_b\pscale= \left(\bar\lambda P_{ba} -\frac{1}{2}\pscale^{-1}\nabla_{b}\bar\lambda \nabla_{a} \pscale -\pscale \upsilon_{Cb}W^{C}{}_a \right)
	\end{equation*}
	which gives the required expression for $\upsilon_{Cb}W^{C}{}_a$.

	Finally, it follows immediately from $f=0$ and the second equation in \eqref{AlmostInverseProperty + LinkfJvPhi} that
	\begin{equation*}
		0=J^A\varphi_{AB}W^B{}_b = \bar \lambda N^a q_{ab}.
	\end{equation*}
	 
\end{proof}

\begin{rema}
	If the gauge $L_{\ei{a}}$ is such that $\bar\lambda=0$, which is however impossible on $\mathcal{H}^{\pm}$, then the gauge is in fact the metric gauge and the expression in Proposition \ref{Proposition: Extended metric expression in gauge2} coincide with those of Theorem~\ref{Theorem: UniquenessOnInside}.
\end{rema}

The message from  Proposition \ref{Proposition: Extended metric expression in gauge2} is that under the joint conditions  \eqref{Definition: Boundary gauge step 1}, \eqref{Definition: Boundary gauge step 2} for $(L_{\ei{A}},\nabla)$, which we reproduce here for convenience,
\begin{align}\label{Gauge fixing conditions}
	f&=0,& \lambda_A &= \bar \lambda Y_A,
\end{align}
 both the extended metric and the extended connection are parametrised by $\zeta^{ab}$ and $\pscale$. The combined conditions \eqref{Gauge fixing conditions} on the gauge and scale can always be achieved locally, in a neighbourhood away from $\mathcal{H}^0$, and it follows from Lemma \ref{Lemma: BoundaryGaugeStep1} and the details of the proof of Proposition \ref{Prop: BoundaryGaugeStep2} (see eq. \eqref{Proof of the f=0 gauge. Achieving the condition}) that, if $L_\ei{A} \mapsto L_\ei{A}+\chi_A\Pi^A_{\st\ei{A}}$, $\bnabla_a \mapsto \bnabla_a +\Upsilon_a$ describes a transformation from $(L_\ei{A},\bnabla_a)$ to another couple satisfying~\eqref{Gauge fixing conditions} then the parameters $(\chi_A, \Upsilon_a)$ must satisfy
\begin{align}\label{Change of gauge conditions}
		\Upsilon_a &= \frac{\chi_0\nabla_a \pscale + \pscale\chi_a}{ \bar\lambda -2\chi_0 \pscale },\\[0.4em]
			0	&=\chi_0^2 \nu  - 2\chi_0\big( \pscale  N^a \chi_a  -\pscale^{-1} (1 - \bar \lambda \nu )\big) + \zeta^{ab}\chi_a \chi_b +2\bar \lambda N^a \chi_a.
\end{align}
 The second equation can be solved for $\chi_0$ and substituted into the first to give a meaningful value for $\Upsilon$. In this sense, one should think of $\chi_a$ as being a free parameter determining the others. 
 
It follows from Lemma \ref{Lemma: BoundaryGaugeStep1} that a gauge $L_{\ei{a}}$ such that $f=0$, existence being guaranteed by Proposition \ref{Prop: BoundaryGaugeStep2}, uniquely defines, at point where $\lambda \neq0$, a pair $(L_{\ei{a}}, \nabla)$ satisfying our gauge fixing condition \eqref{Gauge fixing conditions} (if $\lambda =0$, then the gauge is the metric gauge and any scale $\nabla$ will then satisfy \eqref{Gauge fixing conditions}). We can now investigate the converse.

\begin{lemm}
Let $\nabla$ be some scale and suppose that both $(L_{\ei{a}}, \nabla_a)$ and $(L_\ei{A}+\chi_A\Pi^A_{\st\ei{A}}, \bnabla_a)$ satisfy \eqref{Gauge fixing conditions}. Then we must have $\chi_a = -\chi_0 \, \pscale^{-1}\nabla_a \pscale$ and there are only two possibilities for $\chi_0$, depending on whether $\bar \lambda$ was zero for $L_{\ei{a}}$; i.e. depending on whether $L_{\ei{a}}$ was in fact the metric gauge:
 \begin{align}
\text{if} \quad  \bar \lambda &\neq0& \text{then}&& \chi_0 &=\frac{\nu^{-1}}{2\pscale}&& \text{and}\;& \bar \lambda&=\nu^{-1} \;\mapsto\; 0,\\
\text{if} \quad  \bar \lambda &=0& \text{then}&& \chi_0 &= -\frac{\nu^{-1} }{2 \pscale }&& \text{and}\;& \bar \lambda&=0 \;\mapsto\; \nu^{-1}.
\end{align}
\end{lemm}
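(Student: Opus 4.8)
The plan is to use that, with the common scale $\nabla$ held fixed for both pairs, the only freedom in passing from $L_{\ei{A}}$ to $\hat L_{\ei{A}}=L_{\ei{A}}+\chi_A\Pi^A_{\,\,\ei{A}}$ (cf.~\eqref{Change of gauge: definition of chi}) is the cotractor $\chi_A=\chi_0Y_A+\chi_aZ^a_A$, and to show that the two gauge-fixing conditions \eqref{Gauge fixing conditions} reduce this to a discrete set. I would start from the condition $\lambda_A=\bar\lambda Y_A$: since $\varphi_{AB}\mapsto\varphi_{AB}-2\chi_{(A}I_{B)}$ under this change of gauge (proof of Theorem~\ref{Theorem: UniquenessOnInside}), and using $X^AI_A=\pscale$, $X^A\chi_A=\chi_0$ and $I_A=D_A\pscale=\pscale Y_A+\nabla_a\pscale\,Z^a_A$, one gets
\[
\hat\lambda_A=\lambda_A-\pscale\chi_A-\chi_0I_A=(\bar\lambda-2\pscale\chi_0)\,Y_A-(\pscale\chi_a+\chi_0\nabla_a\pscale)\,Z^a_A .
\]
Requiring $\hat\lambda_A$ to be proportional to $Y_A$ forces $\chi_a=-\chi_0\pscale^{-1}\nabla_a\pscale$ — the first assertion — while the $Y_A$-component gives $\hat{\bar\lambda}=\bar\lambda-2\pscale\chi_0$, in agreement with \eqref{GaugeChangeLambdaBar}.

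Next I would invoke the condition $f=0$. Because both $(L_{\ei{A}},\nabla)$ and $(\hat L_{\ei{A}},\nabla)$ satisfy \eqref{Gauge fixing conditions}, Proposition~\ref{Proposition: Extended metric expression in gauge2} applies to each and forces $\bar\lambda(1-\bar\lambda\nu)=0$ and $\hat{\bar\lambda}(1-\hat{\bar\lambda}\nu)=0$, with the \emph{same} $\nu$ since $\nu$ depends only on the scale. On a neighbourhood of $\mathcal{H}^\pm$ one has $\iota^*\nu^{-1}=\lambda_0\neq0$ (Proposition~\ref{Proposition: Extended metric expression in gauge2}), hence $\nu\neq0$ there, and more generally $\nu\neq0$ wherever a non-trivial change of gauge is at all possible; so each of $\bar\lambda$, $\hat{\bar\lambda}$ equals $0$ or $\nu^{-1}$. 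Feeding this into $\hat{\bar\lambda}=\bar\lambda-2\pscale\chi_0$, the options $(\bar\lambda,\hat{\bar\lambda})=(0,0)$ and $(\nu^{-1},\nu^{-1})$ give $\chi_0=0$ (the trivial change $\hat L_{\ei{A}}=L_{\ei{A}}$), the option $(0,\nu^{-1})$ gives $\chi_0=-\frac{\nu^{-1}}{2\pscale}$, and $(\nu^{-1},0)$ gives $\chi_0=\frac{\nu^{-1}}{2\pscale}$. Since $\bar\lambda\equiv0$ characterises the metric gauge of Theorem~\ref{Theorem: UniquenessOnInside}, this is exactly the claimed dichotomy, together with $\chi_a$ as above.

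A second, more computational route to $\chi_0$ — closer to the derivation of \eqref{Proof of the f=0 gauge. Achieving the condition} — is to substitute $\chi_a=-\chi_0\pscale^{-1}\nabla_a\pscale$, equivalently $\chi_A=2\chi_0Y_A-\chi_0\pscale^{-1}I_A$, into the transformation law $\hat f=f+H^{AB}\chi_A\chi_B+2J^B\chi_B$ from the proof of Proposition~\ref{Prop: BoundaryGaugeStep2}: nullity of $I^{\ei{A}}$, i.e.\ $H^{AB}I_B=0$, leaves only $4\chi_0^2H^{AB}Y_AY_B=4\chi_0^2\nu$ in the quadratic term (using \eqref{TractorMetricBoundaryScale} and \eqref{Projective compactification review: N and nu def}), while $2J^B\chi_B=2\pscale^{-1}\chi_0(1-2\bar\lambda\nu)$ by \eqref{Extended metric expression, in boudary scale 2 a)}; with $f=\hat f=0$ this collapses to $2\chi_0\big(2\chi_0\nu+\pscale^{-1}(1-2\bar\lambda\nu)\big)=0$, hence $\chi_0\in\{0,\,(2\bar\lambda\nu-1)/(2\pscale\nu)\}$, which reproduces the two cases once $\bar\lambda\in\{0,\nu^{-1}\}$ is used. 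I expect the only genuine subtlety to be conceptual rather than computational: spotting that the $f=0$ condition, through Proposition~\ref{Proposition: Extended metric expression in gauge2}, already pins $\bar\lambda$ (and $\hat{\bar\lambda}$) to the two-element set $\{0,\nu^{-1}\}$, so that the remaining case analysis is finite, plus the routine check that $\pscale$ and $\nu$ do not vanish wherever the statement has content.
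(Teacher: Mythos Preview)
Your proposal is correct and essentially coincides with the paper's proof. In particular, your ``second route'' is exactly what the paper does: set $\Upsilon_a=0$ in the change-of-gauge conditions~\eqref{Change of gauge conditions}, deduce $\chi_a=-\chi_0\pscale^{-1}\nabla_a\pscale$ from the first, substitute into the second to obtain the quadratic \eqref{conditions for changing gauge while preserving the scale}, and then use $\bar\lambda(1-\bar\lambda\nu)=0$ to split into the two cases. Your ``first route'' is a mild repackaging of the same argument---you invoke the consequence $\bar\lambda(1-\bar\lambda\nu)=0$ of Proposition~\ref{Proposition: Extended metric expression in gauge2} for \emph{both} gauges up front and enumerate the $2\times2$ possibilities for $(\bar\lambda,\hat{\bar\lambda})\in\{0,\nu^{-1}\}^2$, which is a slightly quicker bookkeeping of the same case split the paper performs at the end.
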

In other terms, for a given scale $\nabla$, and away from $\mathcal{H}^0$, there is $\mathbb{Z}_2$ ambiguity on the space of gauges satisfying \eqref{Gauge fixing conditions}: This $\mathbb{Z}_2$ action takes us back and forth between the metric gauge, $\lambda_A =0$, and another gauge uniquely associated to $\nabla$. (We recall that the metric gauge does not fix any scale at all and thus there is no contradiction).
\begin{proof}
Let $\nabla$ be some scale and suppose that $(L_{\ei{a}}, \nabla)$ satisfies \eqref{Gauge fixing conditions}. If $(L_\ei{A}+\chi_A\Pi^A_{\st\ei{A}}, \bnabla_a)$ also satisfies  \eqref{Gauge fixing conditions} then, from \eqref{Change of gauge conditions}, one should have:
\begin{align*}
	0 &= \chi_0\nabla_a \pscale + \pscale\chi_a,&
0	&=\chi_0^2 \nu  - 2\chi_0\big( \pscale  N^a \chi_a  -\pscale^{-1} (1 - \bar \lambda \nu )\big) + \zeta^{ab}\chi_a \chi_b +2\bar \lambda N^a \chi_a.
\end{align*}
Solving the first equation gives $\chi_a = -\chi_0 \, \pscale^{-1}\nabla_a \pscale$ on $\Mint$ and injecting this in the second yields
\begin{align}\label{conditions for changing gauge while preserving the scale}
	0	&= 2\chi_0\left(2\chi_0\nu  + \pscale^{-1}(1-2\bar\lambda \nu)\right)
\end{align}
Therefore, there is only one non-trivial admissible choice which is $\chi_0 = \pscale^{-1}\nu^{-1}(\bar\lambda\nu - \frac{1}{2})$. Now since, under a general change of gauge,
\begin{equation}
	\lambda_A = \bar \lambda Y_A \quad\mapsto\quad (\bar \lambda -2\pscale \chi_0) \,Y_A -(\chi_0 \nabla_a \pscale + \pscale \chi_a)\, Z^a_{\st A},
\end{equation}
and given the constraint $0=\bar\lambda(1 - \bar \lambda \nu)$ (see \eqref{Extended metric expression, in boudary scale 2 a)}), there is a unique (non-trivial) possibility, which however depends on whether or not $\bar \lambda$ was zero in the first place :
 \begin{align}
 	\text{if} \quad  \bar \lambda &\neq0& \text{then}&& \chi_0 &= \frac{\bar \lambda}{2 \pscale}=\frac{\nu^{-1}}{2\pscale}&& \text{and}\;& \bar \lambda&=\nu^{-1} \;\mapsto\; 0,\\
 	\text{if} \quad  \bar \lambda &=0& \text{then}&& \chi_0 &= -\frac{\nu^{-1} }{2 \pscale }&& \text{and}\;& \bar \lambda&=0 \;\mapsto\; \nu^{-1}.
 	\end{align}
\end{proof}

Now we remind the reader that the metric gauge $\bar \lambda=0$ does not extends smoothly to the boundary $\B$ and therefore requiring a smooth extension will in fact uniquely fix the gauge. 

\begin{prop}\label{Proposition: existence and unicity of the gauge}
Let $x_0$ be a point in $\mathcal{H}^{\pm}$. Let $\nabla$ be any scale which extends smoothly near $x_0$ such that that $N^a$, as defined by \eqref{Projective compactification review: N and nu def}, also has a smooth extension with non-vanishing boundary value. 
 
Then there exists a unique gauge $L_{\ei{a}}$ satisfying \eqref{Gauge fixing conditions} and extending smoothly to a neighbourhood of $x_0$. It is obtained from the metric gauge $\hat{L}_{\ei{a}} \mapsto L_{\ei{a}} = \hat{L}_{\ei{a}} + \chi_A \Pi^A{}_{\ei{a}}$ by taking
	\begin{align}\label{RequiredChangeFromMetricGauge}
	\chi_A &= -\frac{1}{2}\nu^{-1}\pscale^{-1}\Big(\, Y_A - \pscale^{-1}\bnabla_a\pscale \, Z^a_A \,\Big).
\end{align}
\end{prop}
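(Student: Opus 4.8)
The plan is to combine the $\mathbb{Z}_2$-rigidity from the previous lemma with the observation that exactly one of the two candidate gauges extends smoothly across $\mathcal{H}^\pm$. Concretely, start from the metric gauge $\hat L_{\ei{a}}$, which by Theorem~\ref{Theorem: UniquenessOnInside} is the unique gauge on $\Qint$ in which $\hat\lambda_A = 0$; its change relative to a boundary-regular gauge is governed by Lemma~\ref{Lemma: BoundaryGaugeStep1} and the previous lemma, which together say that, for a fixed scale $\nabla$, the \emph{only} non-trivial change of gauge preserving the conditions \eqref{Gauge fixing conditions} is $\chi_a = -\chi_0\,\pscale^{-1}\nabla_a\pscale$ with $\chi_0 = \frac{1}{2}\nu^{-1}\pscale^{-1}$ (the case $\bar\lambda \neq 0$), and this sends $\bar\lambda \mapsto 0$, i.e. back to the metric gauge. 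Running this backwards: starting \emph{from} the metric gauge, the unique non-trivial admissible change is $\chi_0 = -\frac{1}{2}\nu^{-1}\pscale^{-1}$, which is precisely \eqref{RequiredChangeFromMetricGauge} once we substitute $\chi_a = -\chi_0\,\pscale^{-1}\nabla_a\pscale = \frac{1}{2}\nu^{-1}\pscale^{-2}\nabla_a\pscale$ and write $\chi_A = \chi_0 Y_A + \chi_a Z^a_A$.

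The remaining work is to check that this candidate gauge genuinely extends smoothly across $x_0 \in \mathcal{H}^\pm$, and that it is the unique one that does so. For smoothness: by hypothesis $\nabla$ extends smoothly near $x_0$ with $N^a$ (hence $\nu = N^a\nabla_a\pscale$) finite and, by Theorem~\ref{Projective compactification review: CapGoverBoundaryDefiningFunction}, with $\iota^*\nu^{-1} = \dscale \neq 0$ on $\mathcal{H}^\pm$. Thus $\nu^{-1}$ is smooth and nonvanishing near the boundary, so the only potentially singular factor in \eqref{RequiredChangeFromMetricGauge} is $\pscale^{-1}$ and $\pscale^{-2}\nabla_a\pscale$. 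One then verifies that the resulting gauge components of $H^{\ei{a}\ei{b}}$ and of $\upsilon_{Ab}$ — given by Proposition~\ref{Proposition: Extended metric expression in gauge2} with $\bar\lambda \to \nu^{-1}$ — are exactly $\varphi_{AB} = q_{ab}Z^a_A Z^b_B + \bar\lambda Y_A Y_B$ with $q_{ab} = \zeta_{ab} - \bar\lambda\pscale^{-2}\nabla_a\pscale\nabla_b\pscale$, $J^A = \bar\lambda N^a W^A_a + \pscale^{-1}(1-\bar\lambda\nu)X^A$, $f = 0$, all of which are finite at $\B$: the tensor $q_{ab}$ is finite by \eqref{Projective compactification review: asymptotic form of the metric}, the $\pscale^{-1}(1-\bar\lambda\nu)$ term is finite because $\bar\lambda\nu \to 1$ from $\iota^*\bar\lambda = \iota^*\nu^{-1}$, and the connection components in \eqref{Extended connection expression in boudary scale 2} are finite by the Schouten-tensor identities \eqref{Projective compactification review: identities on schouten}. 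In short, the apparent singularities cancel because $\bar\lambda = \nu^{-1}$ on the nose matches the boundary data.

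For uniqueness: any smoothly-extending gauge satisfying \eqref{Gauge fixing conditions} near $x_0$ must, on the interior $\Qint$, differ from $\hat L_{\ei{a}}$ by an admissible change of gauge-and-scale; by the previous lemma (with the fixed scale $\nabla$) there are exactly two such, namely $\hat L_{\ei{a}}$ itself and the one of \eqref{RequiredChangeFromMetricGauge}. Since $\hat L_{\ei{a}}$ is singular at $\B$ (its $\bar\lambda$ vanishes identically, contradicting $\bar\lambda \neq 0$ on $\mathcal{H}^\pm$, or equivalently $\hat J^A \sim \pscale^{-1}X^A$ blows up), the only survivor is \eqref{RequiredChangeFromMetricGauge}. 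I expect the main obstacle to be purely bookkeeping: carefully tracking that the $\mathbb{Z}_2$-labelled change of gauge is applied in the correct direction (metric gauge $\to$ boundary gauge versus the reverse), and confirming that $\bar\lambda$ does indeed acquire the boundary value $\iota^*\nu^{-1} = \dscale$ rather than some rescaled multiple, which requires using the normalisation $\iota^*\tau^2 = \pm\iota^*\nu^{-1}$ from Theorem~\ref{Projective compactification review: CapGoverBoundaryDefiningFunction}.
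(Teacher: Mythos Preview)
Your uniqueness argument is essentially identical to the paper's: both start from the metric gauge, invoke the $\mathbb{Z}_2$-lemma to conclude that any gauge satisfying \eqref{Gauge fixing conditions} with the fixed scale $\nabla$ is either $\hat L_{\ei{a}}$ or the one given by \eqref{RequiredChangeFromMetricGauge}, and then rule out the metric gauge because it cannot extend to $\mathcal{H}^\pm$.

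For existence, however, your route is genuinely different. The paper argues constructively: it picks \emph{some} smooth gauge $\tilde L_{\ei{a}}$ near $x_0$ (which exists because the Cartan geometry on $\Q$ is assumed smooth), first modifies it by a smooth gauge change to achieve $\bar\lambda = \nu^{-1}$ (possible since $\iota^*\bar\lambda = \iota^*\nu^{-1} = \lambda_0$), then applies Proposition~\ref{Prop: BoundaryGaugeStep2} to obtain $f=0$ while preserving $\bar\lambda=\nu^{-1}$, and finally shows one can smoothly transform the resulting scale $\nabla'$ back to the given $\nabla$ (the key point being that $\Upsilon_a=O(\pscale)$, which is forced by $\nu'=\nu$). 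Each step is a \emph{manifestly smooth} modification of a smooth section, so no regularity question ever arises.

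Your approach instead starts from the singular metric gauge, applies the singular change \eqref{RequiredChangeFromMetricGauge}, and checks that the resulting gauge-dependent quantities $f, J^A, \varphi_{AB}, \upsilon_{Ab}$ of Proposition~\ref{Proposition: Extended metric expression in gauge2} extend smoothly. This is precisely the strategy the paper adopts \emph{later}, in Theorem~\ref{Theorem: connection coefficient in constructed gauge}, for the converse existence problem of Section~\ref{ssec:existence}. It is quicker, but there is a step you leave implicit: smoothness of the components $\varphi_{AB}, J^A, f, \upsilon_{Ab}$ does not by itself guarantee that $L_{\ei{a}}$ is a smooth section of $\Tscr^*$. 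One clean way to close this is to compare with any smooth reference gauge $\tilde L_{\ei{a}}$: the difference $\psi_A$ satisfies $2\psi_{(A}I_{B)}=\tilde\varphi_{AB}-\varphi_{AB}$, and since $I_A$ is nowhere vanishing this determines $\psi_A$ smoothly. Alternatively, smoothness of $\upsilon_{Ab}$ means $\nabla_b L_{\ei{A}}=-\upsilon_{Ab}\Pi^A{}_{\ei{A}}$ is smooth, so $L_{\ei{A}}$ extends smoothly by integration.
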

\begin{proof}\mbox{}
		
Let us start by proving unicity. First consider the metric gauge $\hat{L}_{\ei{a}}$. The pair $(\nabla, \hat{L}_{\ei{a}})$ satisfies \eqref{Gauge fixing conditions}; however, since we identically have $\bar \lambda=0$ in this gauge, this is not an admissible gauge along the boundary. We therefore look for another pair $(\nabla, L_{\ei{a}})$ of gauge and scale, with $L_{\ei{a}} = \hat{L}_{\ei{a}} + \chi_A \Pi^A{}_{\ei{a}}$,  satisfying the gauge fixing conditions \eqref{Gauge fixing conditions}. In other terms, we are looking for a solution to \eqref{Change of gauge conditions} where $\Upsilon_a=0$. As we already saw, under the assumption that $\bar \lambda=0$, this has a unique solution given by
\begin{align*}
	\chi_a & = -\chi_0\,\pscale^{-1}\nabla_a\pscale, & \chi_0 & =-\frac{1}{2} \pscale^{-1} \nu^{-1}.
\end{align*}

We shall now prove that given $\bnabla$ such a gauge always exists in a neighbourhood of $x_0$, we emphasise that it should be understood that we work locally. Let $\tilde{L}_{\ei{a}}$ be some regular gauge in a neighbourhood of $x_0$. Since it is regular at $x_0$, we can assume, restricting the neighbourhood if necessary, $\bar \lambda \neq 0$. Now, by assumption, $\nu^{-1}=\left(\pscale^{-2}\zeta^{ab}\bnabla_a\pscale\bnabla_b\pscale\right)^{-1}$ is finite and we recall that its boundary value $\lambda_0 = \iota^* \bar \lambda = \iota^* \nu^{-1}$ does not depend on the gauge. Making use of \eqref{GaugeChangeLambdaBar}, one may always achieve $\bar \lambda = \nu^{-1}$. From the proof of Proposition \ref{Prop: BoundaryGaugeStep2} one then sees that it is possible to modify this gauge in order to achieve $f=0$ and that, in addition, one can do so while preserving the condition $\bar \lambda = \nu^{-1}$. We call $L'_{\ei{a}}$ the gauge achieved in this way and $\nabla'$ the unique connection which, by Lemma \ref{Lemma: BoundaryGaugeStep1} and our assumption, now gives $\lambda_A = \nu^{-1} Y_A$. This gives a pair $(\nabla',L'_{\ei{a}})$ satisfying \eqref{Gauge fixing conditions} in a neighbourhood of $x_0$; we also have that, by construction and from \eqref{Extended metric expression, in boudary scale 2 a)}, $\nu = \nu'$.

From this pair, we would now like to construct $L_{\ei{a}}$ such that the pair $(\nabla, L_{\ei{a}})$ also satisfies \eqref{Gauge fixing conditions}. Introduce the gauge/scale transformation  \[L'_\ei{A} \mapsto L'_\ei{A}+\chi_A\Pi^A_{\st\ei{A}}, \quad  \nabla_a' \mapsto \bnabla'_a -\Upsilon_a=\bnabla_a,\] where $\chi_A$ is the only unknown. 

Since $\bar \lambda  =\bar \lambda ' = \bar \lambda -2 \pscale \chi_0$ we must have $\chi_0=0$ and we find, from \eqref{Change of gauge conditions}, that $\chi_a=\chi_A {W'}_a^A$:
\begin{align}\label{Proof: existence of the gauge sufficient conditions}
	-\Upsilon_a &= \nu \pscale\chi_a,&
	0	&= \zeta^{ab}\chi_a \chi_b +2\nu^{-1} N^a \chi_a.
\end{align}
We will be able to solve the first equation if and only if we can show that $\Upsilon_a$ vanishes along $\B$. Furthermore, since $N'^a=N^a + \pscale^{-1}\zeta^{ab}\Upsilon_a$, and both $\nabla'$ and $\nabla$ are regular along $\B$ (the first by Proposition \ref{Proposition: Extended metric expression in gauge1}, the second by hypothesis) we may write: \[ \Upsilon_a= f_0\nabla_a \pscale + f_1\pscale \Upsilon'_a\] with $\Upsilon'_aN^a=0$. However, by assumption, \[\begin{aligned} \nu' &= \nu +2\pscale N^a\Upsilon_a+\zeta^{ab}\Upsilon_a\Upsilon_b = \nu &\Rightarrow& &2\pscale N^a\Upsilon_a +\zeta^{ab}\Upsilon_a\Upsilon_b=0, \end{aligned}\]
and thus if we can prove the first condition of \eqref{Proof: existence of the gauge sufficient conditions} the second will follow immediately. Now, using this equation and the decomposition of $\Upsilon$ we see that:
\[2\pscale \nu f_0 =- \Upsilon_a\Upsilon_b\zeta^{ab}=-f_0^2\pscale^2\nu -f_1^2\pscale^2\zeta^{ab}\Upsilon'_a\Upsilon'_b,\]
which shows that $f_0$ must be of the form $\pscale f_0'$ and hence we can solve for $\chi_a$ in the first equation~\eqref{Proof: existence of the gauge sufficient conditions}.
\end{proof}

Proposition \eqref{Proposition: existence and unicity of the gauge} is the main result of this section and closes it. Let us however record the following.

\begin{prop}\label{Proposition: lambda N is geodesic}
	If $\nabla$ is a scale such that $N^a$ is finite and non-vanishing then
	$|\nu|^{-\frac{1}{2}}N^a\in\E^a(-2)$ is a solution of the projectively invariant geodesic equation, i.e.  
	\[|\nu|^{-\frac{1}{2}}N^c\nabla_c(|\nu|^{-\frac{1}{2}}N^a)=0, \]
	in particular the integral lines of $N^a$ are projective geodesics for $\nabla$.
\end{prop}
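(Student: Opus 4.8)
The plan is to verify the stated equation directly in the scale $\nabla$, reducing it to a pointwise algebraic fact about the projective Schouten tensor.

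First, for a vector density $u^a$ of projective weight $-2$ the expression $u^c\nabla_c u^a$ is projectively invariant: under $\tilde\nabla=\nabla+\Upsilon$, equation \eqref{ProjectiveChangeOfConnectionVectors} together with the weight gives $\tilde\nabla_c u^a=\nabla_c u^a-\Upsilon_c u^a+(\Upsilon_b u^b)\,\delta^a_c$, so $u^c\tilde\nabla_c u^a=u^c\nabla_c u^a$; hence it suffices to work with $\nabla$. Writing $u^a=|\nu|^{-\frac12}N^a$ and using $\nabla_c|\nu|^{-\frac12}=-\tfrac12|\nu|^{-\frac12}\nu^{-1}\nabla_c\nu$ one obtains
\[ u^c\nabla_c u^a=|\nu|^{-1}\Bigl(N^c\nabla_c N^a-\tfrac{1}{2\nu}\,(N^c\nabla_c\nu)\,N^a\Bigr), \]
so the claim is equivalent to $2\nu\,N^c\nabla_c N^a=(N^c\nabla_c\nu)\,N^a$.

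Second, I would collect the relevant differential identities. From $\nabla_c I_A=0$, with $I_A=D_A\pscale=\pscale\,Y_A+(\nabla_a\pscale)Z^a_A$ and \eqref{TractorConnection}, one reads off $\nabla_c\nabla_a\pscale=-\pscale P_{ca}$. Feeding this and the normal-solution equations \eqref{DH=0} into $N^a=\pscale^{-2}\zeta^{ab}\nabla_b\pscale$, and using $\zeta^{ab}\nabla_b\pscale=\pscale^2N^a$ and $N^a\nabla_a\pscale=\nu$, one recovers precisely the identities appearing in the proof of \eqref{Projective compactification review: identities on schouten}: an expression for $\pscale\nabla_c N^a$ (equivalently $\zeta^{ab}P_{bc}$) in terms of $\nu,\ \nabla_c\pscale,\ N^a$, together with $\nabla_c\nu=-2\pscale\,N^bP_{bc}$ (the sign conventions here being those of \eqref{DH=0}, and the specialness of the scale also making $P_{ab}$ symmetric). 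Contracting with $N^c$ and repeatedly using $N^c\nabla_c\pscale=\nu$ and $\zeta^{ab}\nabla_b\pscale=\pscale^2N^a$, one finds that $N^c\nabla_c N^a$ is proportional to $\zeta^{ab}P_{bc}N^c$ and that $N^c\nabla_c\nu$ is proportional to $P_{bc}N^bN^c$; substituting both into $2\nu\,N^c\nabla_c N^a=(N^c\nabla_c\nu)\,N^a$, the entire statement collapses to the pointwise requirement
\[ P_{bc}N^c\ \text{is proportional to}\ \nabla_b\pscale, \]
i.e. to the vanishing of the component of $P_{ab}$ transverse to $N^a$ in the splitting $T\M=\langle N^a\rangle\oplus\ker(\nabla\pscale)$ — equivalently, to $N^a$ being an eigenvector of the Schouten endomorphism $\zeta^{ac}P_{cb}$.

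Finally, one must prove that $P_{bc}N^c\parallel\nabla_b\pscale$ holds identically on the open set where $N^a$ is finite and non-vanishing, not merely to leading order along $\B$ (which is all that the argument in the proof of \eqref{Projective compactification review: identities on schouten} gives). This is the crux: every direct manipulation of the metrisability equations is circular, merely reproducing $N^c\nabla_c N^a\propto\zeta^{ab}P_{bc}N^c$. The additional input I would bring in is the tractor-curvature constraint $\Weyl{a}{b}{c}{d}\nabla_c\pscale=\pscale\,Y_{abd}$, obtained by contracting $\Omega_{ab\phantom{C}D}^{\phantom{ab}C}I_C=0$ (a consequence of $\nabla I=0$) with the explicit form \eqref{TractorCurvature} of the tractor curvature; inserting the decomposition $P_{bc}=\nu^{-2}(P_{de}N^dN^e)\,\nabla_b\pscale\,\nabla_c\pscale+2\nu^{-1}v_{(b}\nabla_{c)}\pscale+\tilde P_{bc}$ (with $N^bv_b=0$, $N^b\tilde P_{bc}=0$) into the exact metrisability relations and into $\zeta^{ab}=\nu^{-1}\pscale^2N^aN^b+q^{ab}$ (with $q^{ab}\nabla_b\pscale=0$), and using Ricci-flatness — i.e. $P_{ab}=0$ in the Levi-Civita scale of $\pscale^{-2}\zeta_{ab}$ — should force $v_b\equiv0$. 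With $v_b=0$, the reduction of the second step yields $u^c\nabla_c u^a=0$; since $|\nu|^{-\frac12}N^a$ and $N^a$ are pointwise proportional, their integral curves coincide and are therefore the unparametrised projective geodesics of $\nabla$. I expect the main obstacle to be precisely this global vanishing of $v_b$, as opposed to its vanishing at $\B$.
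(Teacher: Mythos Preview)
Your reduction is correct and well-executed: for a weight $-2$ vector density the expression $u^c\nabla_cu^a$ is projectively invariant, and unpacking $u^a=|\nu|^{-1/2}N^a$ together with $N^c\nabla_cN^a=-\pscale^{-1}\zeta^{ab}P_{cb}N^c$ and $\nabla_c\nu=-2\pscale N^bP_{bc}$ does reduce the claim to the pointwise identity $q^{ab}P_{bc}N^c=0$, equivalently $P_{bc}N^c\parallel\nabla_b\pscale$ (your ``$v_b=0$''). You are also right that this is the crux, and that the argument you sketch for it is incomplete: the tractor-curvature identity $W_{ab}{}^d{}_c\nabla_d\pscale=\pscale\,Y_{abc}$ controls derivatives of $P$, not $P_{bc}N^c$ itself, and every attempt to combine Ricci-flatness ($\hat P_{ab}=0$) with $\nabla_a\nabla_b\pscale=-\pscale P_{ab}$ is, as you suspect, circular. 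So as written the proposal has a genuine gap at exactly the point you flag.

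The paper does not attempt this direct route. Its proof runs entirely through the \emph{extended} tractor bundle $\Tscr$ and the non-effective Cartan connection built in Sections~\ref{ssec:NoneffectiveModel}--\ref{ssec:AbstractDiscussionProjectiveCompactification}. Given a scale $\nabla$ with $N^a$ finite and non-vanishing, Proposition~\ref{Proposition: existence and unicity of the gauge} supplies a unique gauge $L_{\ei A}$ satisfying the conditions \eqref{Gauge fixing conditions}; in that gauge $\bar\lambda=\nu^{-1}$, $f=0$, and $\bar\lambda N^a=J^AZ^a_{\,A}$. The two identities that do the work are then $\nabla_cJ^A=-H^{AB}\upsilon_{Bc}$ (from $\nabla H^{\ei A\ei B}=0$, Proposition~\ref{PropertiesExtendedMetricComponents}) and $J^B\upsilon_{Bc}=0$ (from $\nabla_cf=-2J^B\upsilon_{Bc}$ with $f\equiv0$). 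Combined with $X^B\upsilon_{Bc}=\tfrac12\pscale^{-1}\nabla_c\bar\lambda$ from \eqref{Extended connection expression in boudary scale 2}, these yield $\bar\lambda N^c\nabla_c(\bar\lambda N^a)=\tfrac12\bar\lambda(N^c\nabla_c\bar\lambda)N^a$, from which the statement follows by elementary algebra. In other words, the paper packages the missing identity into the existence of the holonomy-reduced extended connection: the fact that one can choose a gauge with $f\equiv0$ is what replaces your attempted direct proof of $v_b=0$. If you want to salvage your approach without invoking $\Tscr$, you would effectively need to reprove, by hand and in the scale $\nabla$, the content of the gauge-fixing of Proposition~\ref{Proposition: existence and unicity of the gauge}; the Weyl--Cotton constraint alone will not do it.
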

\begin{proof}\mbox{}
	
	By the previous proposition, for any scale $\nabla$ such that $N^a$ is finite, we can always find a unique compatible gauge $L_{\ei{a}}$. Recall that, in such choices of gauge, $\bar\lambda N^a=J^AZ_{\st A}^a$,  hence: \[ \bnabla_c (\bar\lambda N^a) =(\bnabla_cJ^A)Z^a_{\st A} -J^AY_A\delta^a_{\st c}=(\bnabla_c J^A)Z^a_{\st A}.\]
	Lemma~\ref{PropertiesExtendedMetricComponents} then shows that:
	\[ \nabla_c(\bar\lambda N^a) = -Z_A^aH^{AB}\upsilon_{Bc}=-(\zeta^{ab}\upsilon_{bc}-\pscale N^aX^B\upsilon_{Bc}).\]
	Observe that since $f=0$ in our gauge, we also have:
	\[ J^B\upsilon_{Bc}=\bar\lambda N^b\upsilon_{bc}=0, \]
	which shows that
	\[\bar\lambda N^c \nabla_c(\bar\lambda N^a) = \pscale \bar\lambda X^B\upsilon_{Bc}N^c N^a.\]
	Now, using equation \eqref{Extended connection expression in boudary scale 2}, we deduce that
	\[\bar\lambda N^c \nabla_c(\bar\lambda N^a)=N^a\frac{1}{2}\bar\lambda N^c\nabla_c\bar\lambda=N^a|\bar\lambda|^{\frac{3}{2}}N^c\nabla_c|\bar\lambda|^\frac{1}{2},  \]
	and thus
	\[ |\bar\lambda|^\frac{1}{2}N^c\nabla_c(|\bar\lambda|^{\frac{1}{2}}N^a)=-|\bar\lambda|^\frac{1}{2}N^c(\nabla_c|\bar\lambda|^\frac{1}{2})N^a+N^c\nabla_c(|\bar\lambda| N^a)=0.\]
\end{proof}

\subsection{Existence for Ricci-flat projectively compact Einstein metrics}\label{ssec:existence}
In the previous discussion, we explored the structures induced by a holonomy reduction of a non-effective projective geometry on a manifold $\Q$. In particular, we studied a gauge fixing condition~\eqref{Gauge fixing conditions}, that, for every projective scale $\nabla$ satisfying the condition that $N^a$ is regular at the boundary, singled out two possible distinguished choices of gauge. At interior points, both solutions were possible and corresponded either to the metric gauge (see Theorem~\ref{Theorem: UniquenessOnInside}) or another gauge $L_{\ei{a}}$ that is uniquely associated with the scale (see Proposition~\ref{Proposition: Extended metric expression in gauge2} and Proposition~\ref{Proposition: existence and unicity of the gauge}). Near points of the boundary $\B$ where $\bar\lambda \neq0$ (which we recall is a gauge invariant condition at boundary points contrary to other points) the metric gauge is always singular and the second solution is the only regular gauge satisfying our conditions. Note that these results have been obtained under the assumptions that the initial non-effective Cartan geometry was well-defined and smooth everywhere and one might be worried that this could constrain the underlying projectively compact Einstein manifold. As we shall now see, this is not the case.\\

The work of the previous section indeed puts us in a position to study the question of the existence, on $M\setminus \mathcal{H}_0$ and for any projectively compact metrics as reviewed in Section~\ref{ClassicalProjectiveCompactification}, of such non-effective Cartan geometries: we only need to check that the expressions given by Theorem~\ref{Theorem: UniquenessOnInside} and Proposition~\ref{Proposition: Extended metric expression in gauge2} lead, when applied to projectively compact metrics, to a well-defined structure.

On the interior $\Mint$, the metric gauge makes things particularly easy because the components of $H^{\ei{A}\ei{B}}$ and the connection, as given by Theorem~\ref{Theorem: UniquenessOnInside}, are clearly well-defined on $\Mint$. We can see these objects as living on a trivial bundle $\T|_{\Mint}\oplus \R$ over $\Mint$.

Near any boundary point $x_0 \in \mathcal{H}_\pm$, the key observation is that any scale associated to a density $\tau$ given by Theorem~\ref{Projective compactification review: CapGoverBoundaryDefiningFunction} satisfies the hypothesis of Proposition~\ref{Proposition: existence and unicity of the gauge} in a neighbourhood $U$. Hence, we can locally define the components using the expressions of Proposition~\ref{Proposition: Extended metric expression in gauge2} (or, equivalently, by starting from the metric gauge and applying the change of gauge \eqref{RequiredChangeFromMetricGauge}). It only remains to check that the corresponding expressions extend smoothly to boundary points:
\begin{theo}\label{Theorem: connection coefficient in constructed gauge}\mbox{}
	
For any scale $\tau$ as given in Theorem~\ref{Projective compactification review: CapGoverBoundaryDefiningFunction}, we define the extended connection modelled on \eqref{Homogeneous space model} by \eqref{ExtendedConnection} and
\begin{equation}
\upsilon_{Ab}=\nu^{-2}N^cP_{cb}Y_A + \pscale^{-1}\left(\nu^{-1}\bP_{ab} -\nu^{-2}\nabla_a\pscale N^dP_{db} -q_{ab}\right)Z^a_{\st A}.
\end{equation}
Because of the identities \eqref{Projective compactification review: identities on schouten}, which are valid for any projectively compact Einstein metric, this expression is finite along the boundary and admits the following asymptotic behaviour:
\begin{align*}
\upsilon_{Ab}=&  \big(2 \nu^{-3} N^c N^ dP_{cd}\big)\nabla_b\pscale\,Y_A \\&+ \pscale^{-1}\left(\nu^{-1}\bP_{ab} -\nu^{-2}\nabla_a\pscale N^dP_{db} -q_{ab}\right)Z^a_{\st A}\\& + O(\pscale).
\end{align*}
This connection admits a holonomy reduction given by a parallel transported non-degenerate tractor pairing $H^{\ei{a}\ei{b}}$:
\begin{align}
	H^{\ei{A}\ei{B}}&=H^{AB}\rotpi^{\ei{a}}_{\,\,A}\rotpi^{\ei{B}}_{\,\,B} + 2\nu^{-1}N^a\, W^A_{a}I^{(\ei{B}}\rotpi^{\ei{A})}{}_{\!\!A}.
\end{align}
It extends smoothly to the boundary and is invertible.
\end{theo}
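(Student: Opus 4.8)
The plan is to check that the explicit expressions in the statement — which were obtained in the interior in Propositions~\ref{Proposition: Extended metric expression in gauge2} and~\ref{Proposition: existence and unicity of the gauge} — extend smoothly across $\mathcal{H}^{\pm}$ and continue to define a genuine, normal, holonomy‑reduced non‑effective Cartan geometry there. Fix $x_0\in\mathcal{H}^{\pm}$, a small neighbourhood $U$, and a scale $\tau$ as in Theorem~\ref{Projective compactification review: CapGoverBoundaryDefiningFunction}; then $N^a$, $\nu=N^a\nabla_a\pscale$ and the tensor $q_{ab}$ of~\eqref{Projective compactification review: asymptotic form of the metric} are all smooth on $U$, with $\nu$ nowhere vanishing after shrinking $U$. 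On $\Mint\cap U$ this data is, by~\eqref{RequiredChangeFromMetricGauge}, exactly the canonical structure of Theorem~\ref{Theorem: UniquenessOnInside} written in the gauge adapted to $\tau$ — so there it is already parallel and non‑degenerate, and its underlying projective class is the (normal) metric one. Hence the only genuinely new content is: (i) $\upsilon_{Ab}$ extends smoothly to $\B\cap U$; (ii) $H^{\ei{A}\ei{B}}$ does too and stays invertible; and (iii) the stated asymptotic expansion. The formulas contain negative powers of $\pscale$, and the whole point is that these cancel, which is exactly where the asymptotic identities~\eqref{Projective compactification review: identities on schouten} and~\eqref{Projective compactification review: asymptotic form of the metric} — valid for any projectively compact Einstein metric — will be used.

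For~(i), the $Y_A$‑part $\nu^{-2}N^cP_{cb}$ is manifestly smooth on $U$, since $N^a$ and the Schouten tensor $P_{ab}$ (of a scale extending smoothly to $\B$) are smooth and $\nu$ is nowhere zero. For the $Z^a_A$‑part $\pscale^{-1}E_{ab}$, with $E_{ab}:=\nu^{-1}P_{ab}-\nu^{-2}\nabla_a\pscale\,N^dP_{db}-q_{ab}$, I would substitute the second line of~\eqref{Projective compactification review: identities on schouten}, together with its contraction against $N^d$ — which uses $N^dq_{db}=0$, immediate from the definitions of $q_{ab}$ in~\eqref{Projective compactification review: asymptotic form of the metric} and of $N^a$, $\nu$, together with $N^a\nabla_a\pscale=\nu$. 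The $\nabla_a\pscale\nabla_b\pscale$‑contributions coming from $\nu^{-1}P_{ab}$ and from $\nu^{-2}\nabla_a\pscale\,N^dP_{db}$ then cancel, so $E_{ab}$ vanishes along $\B\cap U$; Hadamard's lemma gives that $\pscale^{-1}E_{ab}$ is smooth up to the boundary. Reading off the first‑order part of $E_{ab}$, and using the first line of~\eqref{Projective compactification review: identities on schouten} to control $\pscale^{-1}\nabla_b\nu$, produces the claimed asymptotic expansion of $\upsilon_{Ab}$.

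For~(ii), in the $\tau$‑scale one has $H^{AB}=\zeta^{ab}W^A_aW^B_b-2\pscale\,N^aW^{(A}_aX^{B)}+\nu\,X^AX^B$ and $I_A=\pscale Y_A+\nabla_a\pscale\,Z^a_A$, both smooth up to $\B\cap U$, and the only further ingredient of the displayed $H^{\ei{A}\ei{B}}$ is $\nu^{-1}N^a$; hence $H^{\ei{A}\ei{B}}$ is smooth up to $\B\cap U$. For non‑degeneracy I would exhibit the pointwise inverse directly: take $\varphi_{AB}=q_{ab}Z^a_AZ^b_B+\nu^{-1}Y_AY_B$ — which is the $\varphi_{AB}$ of Proposition~\ref{Proposition: Extended metric expression in gauge2} in this gauge, hence manifestly smooth on $U$ — and with $f=0$, $J^A=\nu^{-1}N^aW^A_a$ verify the three relations of~\eqref{AlmostInverseProperty + LinkfJvPhi}. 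These reduce to $N^aq_{ab}=0$, $N^a\nabla_a\pscale=\nu$ and the standard completeness relation $X^AY_B+W^A_aZ^a_B=\delta^A_B$. By Definition~\ref{Definition: sExtendedMetricComponents} this is equivalent to the $\Phi_{\ei{A}\ei{B}}$ built from $\varphi_{AB}$ and $I_B$ being a two‑sided inverse of $H^{\ei{A}\ei{B}}$ on all of $U$, so $H^{\ei{A}\ei{B}}$ is non‑degenerate there, $\B$ included. Finally $\Phi_{\ei{A}\ei{B}}I^{\ei{A}}I^{\ei{B}}=0$ is immediate from $\Pi^A_{\,\,\ei{A}}I^{\ei{A}}=0$ and $L_{\ei{A}}I^{\ei{A}}=1$, so the reduction is indeed to $ISO(1,n)$.

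It then remains to note that $\nabla_cH^{\ei{A}\ei{B}}=0$ persists on $\B\cap U$: it holds on the dense open set $\Mint\cap U$ — either because there the structure is the canonical one of Theorem~\ref{Theorem: UniquenessOnInside} up to the gauge change~\eqref{RequiredChangeFromMetricGauge}, or by a direct check of the three equations of Proposition~\ref{PropertiesExtendedMetricComponents}, which again reduce to~\eqref{Projective compactification review: identities on schouten} — and since $H^{\ei{A}\ei{B}}$, $\upsilon_{Ab}$ and $\nabla$ are now all smooth on $U$, the identity extends to $U$ by continuity. Patching these local models over $\mathcal{H}^{\pm}$ with the metric‑gauge model on $\Mint$ via~\eqref{RequiredChangeFromMetricGauge} then yields the non‑effective Cartan geometry on $M\setminus\mathcal{H}_0$ asserted in the statement. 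I expect the main obstacle to be step~(i): the cancellation $E_{ab}|_{\B}=0$ is not a formal manipulation but genuinely uses the full strength of the asymptotic Schouten identities~\eqref{Projective compactification review: identities on schouten}, which themselves encode Ricci‑flatness together with projective compactness of order one.
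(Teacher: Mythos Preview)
Your argument is correct and follows essentially the same route as the paper: obtain the formula for $\upsilon_{Ab}$ from the metric gauge via the change of gauge~\eqref{RequiredChangeFromMetricGauge} (equivalently, from Proposition~\ref{Proposition: Extended metric expression in gauge2}), and then invoke the Schouten identities~\eqref{Projective compactification review: identities on schouten} together with~\eqref{Projective compactification review: asymptotic form of the metric} to see that everything extends smoothly. Your treatment of non-degeneracy is more explicit than the paper's (you exhibit the inverse $\Phi_{\ei{A}\ei{B}}$, whereas the paper simply notes that $\nu^{-1}N^a$ is non-vanishing), and your continuity argument for $\nabla_cH^{\ei{A}\ei{B}}=0$ on $\B$ is a natural elaboration the paper leaves implicit; both are fine. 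One small point: the final sentence about ``patching these local models'' really belongs to Theorem~\ref{Thm:ExtensionToBoundary} and Lemma~\ref{boundary gauge change of tau}, not to the present statement, which concerns a single fixed $\tau$.
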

\begin{proof}From Equation~\eqref{TransformationRuleCotractorValuedConnectionForm}, equation~\eqref{RequiredChangeFromMetricGauge} and Proposition~\ref{Theorem: UniquenessOnInside} we have:
\begin{equation*} \hat{\upsilon}_{Ab}=-\frac{1}{2}\nu^{{-2}}\pscale^{-1}\nabla_b \nu Y_A +\left(-\pscale^{-1}\zeta_{ba}+\pscale^{-2}\nabla_b\pscale\nabla_a\pscale +\nu^{-1}\pscale^{-1}P_{ba}+\frac{1}{2}\nu^{-2}\pscale^{-2}\nabla_b\nu\nabla_a\pscale\right)\;Z^a_A.  \end{equation*}
This also follows from Proposition \ref{Proposition: Extended metric expression in gauge2}.
The desired expression in the statement is then obtained from Equation~\eqref{Projective compactification review: asymptotic form of the metric} and the identities \eqref{Projective compactification review: identities on schouten}.
The identities \eqref{Projective compactification review: identities on schouten} also justify that it extends to boundary points.

The expression for $H^{\ei{A}\ei{B}}$ follows from Proposition \ref{Proposition: Extended metric expression in gauge2}. Since $N^a\nu^{-1}$ is non-vanishing, $H^{\ei{a}\ei{b}}$ is non-degenerate.
\end{proof}

At this point there are still two potential difficulties that we will elucidate in one go. The first is that Theorem~\ref{Theorem: connection coefficient in constructed gauge} makes use of a scale $\tau$ and since we could have chosen instead any other scale $\tilde\tau$, we need to check that all such connections only differ by a (finite) gauge transformation. This will ensure that the resulting connection is a well defined (i.e. invariant) object.
Second, we have only shown that the structure exists locally, and need to justify that the local constructions can be glued together in a coherent fashion. 

Since there is a canonical way to move between any of these gauges $L_\ei{A}$, $\tilde L_\ei{A}$ and the metric gauge $\mL_\ei{A}$ (which is well-defined in $\Mint$), we can define a canonical way to move between two local gauges determined by two densities $\tau$ and $\tilde\tau$ on the intersection of two local neighbourhoods $U\cap V$ by the successive gauge/scale transformations:
\[(L_\ei{A}, \nabla_a) \to (\mL_\ei{A}, \nabla_a) \to (\mL_\ei{A}, \tilde \nabla_a) \to (\tilde L_\ei{A},\tilde \nabla). \]
If the overall gauge transformation cotractor $\chi_A$, i.e. defined such that $\tilde{L}_{\ei{B}}=L_{\ei{B}}+\chi_A\,\Pi^A_{\,\,\ei{A}}$, extends smoothly to boundary points, then this gives a valid gauge transformation between the local structures that can be used to extend the structure to $U\cup V$. Along the way it will also prove that the extended connection as defined in Theorem~\ref{Theorem: connection coefficient in constructed gauge} is an invariant object.

\begin{lemm}\label{boundary gauge change of tau} Let $\tau$, $\tilde \tau$ be two scales as given by Theorem~\ref{Projective compactification review: CapGoverBoundaryDefiningFunction}, and defined in a neighbourhood of $\mathcal{H}_\pm$. Denote by $L_\ei{a}$, $\tilde L_\ei{a}$ the corresponding gauges determined by Proposition~\ref{Proposition: existence and unicity of the gauge}. Let us write
$\tilde\tau = \tau \mp \omega \pscale$, then in the splitting determined by $\tau$:
\[\tilde L_\ei{a}= L_\ei{a} \;+ D_A(\, \omega \tau\,)\,\Pi^A_{\st\ei{A}} +  O(\pscale, \bnabla \pscale)\]
From which we deduce:
\begin{equation}\label{change in upsilon along boundary 1}
	 \tilde\upsilon_{ab}= \upsilon_{ab}\; +\,\big(\, \tau\,\nabla_a\nabla_b \omega\,\pm \tau^{-1}\,\bar{h}_{ab}\, \omega \big) + O(\pscale, \bnabla\pscale).
\end{equation}
\end{lemm}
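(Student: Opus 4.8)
The plan is to track a pair $(L_{\ei{a}},\nabla_a)$ satisfying the gauge fixing conditions \eqref{Gauge fixing conditions} as it is built from the metric gauge by the explicit change \eqref{RequiredChangeFromMetricGauge}, and to compare the cotractors $\chi_A$, $\tilde\chi_A$ obtained for the two scales $\tau$, $\tilde\tau$. Since Proposition~\ref{Proposition: existence and unicity of the gauge} gives $L_{\ei{a}} = \hat L_{\ei{a}} + \chi_A\Pi^A{}_{\ei{a}}$ with $\chi_A = -\tfrac12\nu^{-1}\pscale^{-1}\big(Y_A - \pscale^{-1}\nabla_a\pscale\, Z^a_A\big)$ in the scale $\nabla=\nabla^\tau$, and similarly for $\tilde\tau$, the overall gauge transformation $\chi_A$ relating $L_{\ei{a}}$ to $\tilde L_{\ei{a}}$ is computed by first expressing both in a common splitting. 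The key intermediate quantities are: the change of projective scale $\Upsilon_a = \tilde\nabla_a - \nabla_a$, which for $\tilde\tau = \tau \mp \omega\pscale$ is $\Upsilon_a = \pscale^{-1}\tilde\tau^{-1}\nabla_a\tilde\tau - \pscale^{-1}\tau^{-1}\nabla_a\tau = \mp\nabla_a\omega + O(\pscale,\nabla\pscale)$ (a scale preserving a density changes by $d\log$ of the density ratio); and the effect of this $\Upsilon_a$ on the splitting operators $Y_A$, $Z^a_A$ via \eqref{ChangeOfProjectiveSplitting}.

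Concretely, I would proceed in the following steps. First, using $i^*\tau^2 = \pm\lambda_0$ and $i^*\nu^{-1} = \lambda_0$ (Theorem~\ref{Projective compactification review: CapGoverBoundaryDefiningFunction} and Proposition~\ref{Proposition: Extended metric expression in gauge1}), I record that $\nu^{-1} = \pm\tau^2 + O(\pscale)$ and more precisely expand $\nu^{-1}$ for the scale $\tau$ versus $\tilde\tau$ using $\nu' = \nu + 2\pscale N^a\Upsilon_a + \zeta^{ab}\Upsilon_a\Upsilon_b$ — but both scales are required to have regular $N^a$, so the leading discrepancy is controlled. Second, I substitute $\Upsilon_a = \mp\nabla_a\omega + O(\pscale,\nabla\pscale)$ into the metric-gauge formula \eqref{RequiredChangeFromMetricGauge} for $\tilde\chi_A$ (which is built with $\tilde\nabla$), re-express $\tilde Y_A$, $\tilde Z^a_A$ in the $\tau$-splitting via \eqref{ChangeOfProjectiveSplitting}, and subtract $\chi_A$. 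The density/scale bookkeeping should collapse the difference to $\chi_A^{\text{tot}} = D_A(\omega\tau) + O(\pscale,\nabla\pscale)$: the $Y_A$-component comes from the weight-$1$ term $w\,t\,Y_A$ in \eqref{Definition: Thomas operator} with $t = \omega\tau$, the $Z^a_A$-component from $\nabla_a(\omega\tau)$. Third, having established $\tilde L_{\ei{a}} = L_{\ei{a}} + D_A(\omega\tau)\Pi^A_{\st\ei{A}} + O(\pscale,\nabla\pscale)$, I apply the transformation rule \eqref{TransformationRuleCotractorValuedConnectionForm}, $\hat\upsilon_{Bc} = \upsilon_{Bc} - \nabla_c\chi_B$, with $\chi_B = D_B(\omega\tau)$. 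Computing $\nabla_c D_B(\omega\tau)$ using \eqref{Definition: Thomas operator} and \eqref{TractorConnection} produces a $Z^a_C Z^b_B$-component of the form $-\nabla_c\nabla_b(\omega\tau) - P_{cb}(\omega\tau)$ plus weighted corrections; projecting onto $W^A_a W^B_b$ and using $\tau^2 \approx \pm\lambda_0$, $P_{ab} \approx \nu\,\bar h_{ab} \approx \pm\tau^{-2}\cdot\tau^2\,\bar h_{ab}$... — after cleaning up, the Schouten term contributes $\pm\tau^{-1}\bar h_{ab}\,\omega$ and the Hessian term $\tau\,\nabla_a\nabla_b\omega$, giving \eqref{change in upsilon along boundary 1}.

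The main obstacle I anticipate is the careful treatment of the $O(\pscale,\nabla\pscale)$ error terms: the formula \eqref{RequiredChangeFromMetricGauge} for $\chi_A$ contains a $\pscale^{-1}$ and a $\pscale^{-2}\nabla_a\pscale$, so the \emph{difference} $\tilde\chi_A - \chi_A$ is a difference of singular quantities, and one must verify the singular parts cancel exactly, leaving the stated finite expression. This requires using not just the leading boundary values but the first-order asymptotics — in particular the identities \eqref{Projective compactification review: identities on schouten} giving $\nabla_a\nu = O(\pscale)$ along $\mathcal{H}_\pm$ and $P_{ab} = \nu\,\bar h_{ab} + (\cdots)\nabla_a\pscale\nabla_b\pscale + O(\pscale)$, together with $\nabla_a\tau = O(\pscale,\nabla\pscale)$ (Proposition~\ref{Proposition: Extended metric expression in gauge1}, as $i^*\nabla_c\bar\lambda = 0$ in these scales). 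A secondary subtlety is that $\Upsilon_a = \mp\nabla_a\omega$ is itself only valid modulo $O(\pscale,\nabla\pscale)$, and one must confirm that feeding this into \eqref{ChangeOfProjectiveSplitting} and then into \eqref{TransformationRuleCotractorValuedConnectionForm} does not amplify the error — which holds because all further operations are smooth (no further division by $\pscale$). Once the singular cancellations are verified, the identification of the two surviving terms $\tau\nabla_a\nabla_b\omega$ and $\pm\tau^{-1}\bar h_{ab}\omega$ is a direct substitution.
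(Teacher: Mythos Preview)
Your strategy matches the paper's exactly: compute $\tilde\chi_A-\chi_A$ from the explicit metric-gauge formula \eqref{RequiredChangeFromMetricGauge} by expanding all scale-dependent ingredients ($\Upsilon_a$, $\tilde\nu^{-1}$, the splitting operators) in powers of $\pscale$, verify that the $\pscale^{-1}$ and $\pscale^{-2}$ pieces cancel, and then obtain \eqref{change in upsilon along boundary 1} via \eqref{TransformationRuleCotractorValuedConnectionForm} together with the Schouten asymptotics \eqref{Projective compactification review: identities on schouten}. One slip to flag: your displayed formula for $\Upsilon_a$ is incorrect --- there is no $\pscale^{-1}$, and since $\nabla_a\tau=0$ one simply has $\Upsilon_a=\tilde\tau^{-1}\nabla_a\tilde\tau=\mp\tau^{-1}\omega\,\nabla_a\pscale+O(\pscale)$, which is itself $O(\pscale,\nabla\pscale)$ rather than $\mp\nabla_a\omega$ --- but this does not affect the method, only the bookkeeping; the paper's appendix expands $\Upsilon_a$ and $\tilde\nu^{-1}$ to the necessary order and the singular cancellations you anticipate indeed occur.
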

\begin{proof}
The proof of the first equation can be found in Appendix~\ref{sec:ProofChangeOfupsilon}. It is essentially a computation that provides no interesting insight at this point. One can also check the result by solving the gauge fixing conditions \eqref{Change of gauge conditions} for $(\chi_A, \Upsilon_a = -\tilde\tau^{-1}\nabla_a \tilde\tau)$ at leading order.  

To obtain the second equation it is sufficient to take the derivative of the previous equation.
Since $\nabla_c I_B=0$, it follows that $\nabla_a\nabla_b \pscale = -P_{ab}\pscale$, hence:
\[ \nabla O(\pscale, \nabla \pscale)= O(\pscale, \nabla \pscale). \]
Now:
\[ \nabla_a D_B(\omega\nu^{-1}\tau^{-1})= \nu^{-1}\tau^{-1}(P_{ab}\omega +\nabla_a\nabla_b \omega)Z^b_B +O(\pscale), \]
and Equation~\eqref{change in upsilon along boundary 1} then follows directly from using \eqref{Projective compactification review: identities on schouten}  and \eqref{Projective compactification review: identities on lambda nu}.
\end{proof}

The general spirit of this construction is simply that we can deduce from this a Cartan atlas $\{U^i, \omega^i\}$ and transition functions $\{f_{ij}\}$ that can be used to reconstruct the $P$-principal bundle equipped with its global Cartan connection. We then recover the extended tractor bundle by the standard associated bundle construction. This construction is at this point ad-hoc and a more satisfying geometric description of the extended tractor bundle $\mathscr{T}$ is the object of work in progress.
This proves the following theorem.
\begin{theo}\label{Thm:ExtensionToBoundary}\mbox{}
	
	 Given a projectively compact Ricci flat Einstein metric $\pg_{ab}$ of signature $(p,q)$ on the interior $\Mint$ of a manifold with boundary $\M$, there is a uniquely determined non-effective Cartan connection modelled on \eqref{Homogeneous space model} and extending to $\M\setminus\mathcal{H}_0$. 

This Cartan geometry is such that, by construction, each choice of scale $\tau \in \mathcal{E}(1)$ as given by Theorem~\ref{Projective compactification review: CapGoverBoundaryDefiningFunction}, uniquely determines a gauge close to any point $x_0 \in \B\setminus\mathcal{H}_0$.

What is more, it admits a holonomy reduction to the subgroup $ISO(p,q)$ of $PSL(n+2)$:
\[ \begin{pmatrix}1 & -\omega^\rho\eta_{\rho \sigma}A^\sigma_{\st \nu} & {-\frac{1}{2}\eta_{\mu\nu}\omega^\mu\omega^\nu} \\ 0 & A^\mu_\nu & \omega^\mu \\ 0 & 0 & 1 \end{pmatrix} \mod Z(SL(n+2)), \quad A \in SO(p,q). \]
\end{theo}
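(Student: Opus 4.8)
The plan is to assemble Theorem~\ref{Thm:ExtensionToBoundary} from the pieces already established, treating separately the interior, a neighbourhood of each boundary point, and the overlaps, and finally identifying the holonomy group. On the interior $\Mint$, Theorem~\ref{Theorem: UniquenessOnInside} already gives a canonical, smooth non-effective Cartan connection — the metric gauge expressions involve only $\pscale$ and $\zeta_{ab}$, so $\upsilon_{Ac}=-\pscale^{-1}\zeta_{cb}Z^b_{\st A}$ and $H^{\ei{A}\ei{B}}$ as stated there manifestly define a smooth structure on the trivial bundle $\T|_{\Mint}\oplus\R$. So the interior requires no further work beyond recording that it is the effective projective structure of $\pg_{ab}$ extended trivially.

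Next I would handle a neighbourhood $U$ of an arbitrary $x_0\in\mathcal{H}^{\pm}$. By Theorem~\ref{Projective compactification review: CapGoverBoundaryDefiningFunction} there exists a special scale $\tau$ near $x_0$ with $N^a$ finite and non-vanishing on $\B\cap U$; by Proposition~\ref{Proposition: existence and unicity of the gauge} this scale determines a unique gauge $L_{\ei{a}}$ satisfying the gauge-fixing conditions \eqref{Gauge fixing conditions}, obtained from the metric gauge by the explicit change \eqref{RequiredChangeFromMetricGauge}. Theorem~\ref{Theorem: connection coefficient in constructed gauge} then provides the explicit $\upsilon_{Ab}$ and $H^{\ei{A}\ei{B}}$ in this gauge and shows, using the Einstein identities \eqref{Projective compactification review: identities on schouten}, that both extend smoothly to $\B\cap U$ with $H^{\ei{A}\ei{B}}$ invertible there. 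This gives a smooth non-effective Cartan connection on $U$ admitting the holonomy reduction $\nabla H^{\ei{A}\ei{B}}=0$, with $I^{\ei{A}}$ null (since $H^{AB}I_B=0$ translates into $\Phi_{\ei{A}\ei{B}}I^{\ei{A}}I^{\ei{B}}=0$).

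The substantive step is the gluing: I would show that on an overlap $U\cap V$ of two such neighbourhoods, determined by scales $\tau,\tilde\tau$, the two local Cartan connections differ by a gauge transformation $\tilde L_{\ei{B}}=L_{\ei{B}}+\chi_A\Pi^A_{\st\ei{B}}$ whose cotractor $\chi_A$ extends smoothly to boundary points. The route is: pass through the metric gauge, $(L_{\ei{a}},\nabla)\to(\mL_{\ei{a}},\nabla)\to(\mL_{\ei{a}},\tilde\nabla)\to(\tilde L_{\ei{a}},\tilde\nabla)$; the first and last legs are governed by \eqref{RequiredChangeFromMetricGauge} and the middle leg is an ordinary projective change of scale. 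Composing and using Lemma~\ref{boundary gauge change of tau} (with $\tilde\tau=\tau\mp\omega\pscale$), one reads off that the total $\chi_A$ is, to leading order, $D_A(\omega\tau)$ plus terms that are $O(\pscale,\bnabla\pscale)$ — in particular finite and smooth at $\B$. Declaring these $\chi_{ij}$ to be transition data produces a Cartan atlas $\{U^i,\omega^i\}$ with smooth transition functions $\{f_{ij}\}$; reconstructing the $P$-principal bundle and $\mathscr{T}$ by the standard associated-bundle construction yields the global object, and its uniqueness follows because on each chart the connection is forced by \eqref{Gauge fixing conditions} together with smoothness at $\B$. I expect the cocycle compatibility (that the glued $\chi_{ij}$ actually satisfy $\chi_{ik}=\chi_{ij}\oplus\chi_{jk}$ on triple overlaps, and that the reconstruction is independent of the atlas) to be the main bookkeeping obstacle, though it is essentially forced by uniqueness on overlaps.

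Finally, to pin down the holonomy group, I would argue as follows. The two parallel objects $H^{\ei{A}\ei{B}}$ (non-degenerate, signature $(n+1,2)$) and $I^{\ei{A}}$ (null with respect to $H$) reduce the structure group to the stabiliser in $G$ of this pair; since $I^{\ei{A}}$ is also the canonical section picking out $\mathfrak{p}$, the holonomy reduction lands in $P\cap\mathrm{Stab}(H)$. Working in the gauge of Theorem~\ref{Thm:ExtensionToBoundary} with the parametrisation \eqref{Non effective model: G and P groups}, the condition that $g$ preserve $H^{\ei{A}\ei{B}}=H^{AB}\rotpi^{\ei{a}}_{\st A}\rotpi^{\ei{B}}_{\st B}+2\nu^{-1}N^aW^A_aI^{(\ei{B}}\rotpi^{\ei{A})}_{\st A}$ and fix the null line $[I^{\ei{A}}]$ forces $\omega^\rho$ arbitrary, $A\in SO(p,q)$, $a=1$, $\chi_\mu=-\omega^\rho\eta_{\rho\sigma}A^\sigma_{\st\nu}$ and $\chi_0=-\tfrac12\eta_{\mu\nu}\omega^\mu\omega^\nu$ — a direct linear-algebra check — giving precisely the displayed matrix form of $ISO(p,q)\subset PSL(n+2)$. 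This is the realisation of the Poincaré group already identified in Section~\ref{ssec:metric-holonomy-reduction}, so the holonomy reduction claimed in the theorem holds.
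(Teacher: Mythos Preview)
Your proposal is correct and follows essentially the paper's own argument: metric gauge on $\Mint$, local boundary charts via Theorem~\ref{Projective compactification review: CapGoverBoundaryDefiningFunction}, Proposition~\ref{Proposition: existence and unicity of the gauge} and Theorem~\ref{Theorem: connection coefficient in constructed gauge}, gluing through the metric gauge using Lemma~\ref{boundary gauge change of tau}, and reconstruction of the global bundle from the resulting Cartan atlas. One small slip in the last paragraph: the holonomy reduces to the stabiliser of $H^{\ei{A}\ei{B}}$ in $G$, not in $P$ --- your own conclusion that $\omega^\rho$ is arbitrary confirms this, since $P$ was defined precisely as $\{g\in G:\omega^\mu=0\}$.
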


\section{Projective structure on the extended boundary}\label{sec:Projective structure on the extended boundary}
\subsection{Induced non effective Cartan geometry on the boundary}\label{ssec:DependenceTau}

Let us recall, from \cite{Cap:2014aa,Cap:2014ab}, that the projective boundary $\mathcal{H}_{\pm}$ inherits a projective Cartan geometry together with a holonomy reduction to $SO(1,n-1)$. This induces on $\mathcal{H}_{\pm}$ a metric of signature\footnote{$(r,s)= (1,n-2)$ on $\mathcal{H}_{+}$ and $(r,s)= (0,n-1)$ on $\mathcal{H}_{-}$.} $(r,s)$, and a Cartan geometry modelled on 
	\begin{equation}
	SO(1,n-1) \;/\; SO(r,s).
\end{equation}

Now, in the previous sections, we found that every choice of scale $\tau$ given by Theorem~\ref{Projective compactification review: CapGoverBoundaryDefiningFunction}, i.e. such that\footnote{In this section we will also frequently use the notation $\tau_0 := \iota^* \tau = |\lambda_0|^{\frac{1}{2}}$ for this density.} $\iota^* \tau = |\lambda_0|^{\frac{1}{2}}$ gives rise to a distinguished boundary gauge $L_{\ei{a}}$  for the non-effective projective geometry.  We obtain in this way, along the boundary, and after making use of the holonomy reduction to $ISO(1,n-1)$, a non-effective Cartan geometry modelled on 
	\begin{equation}
	SO(1,n-1)\ltimes \mathbb{R}^{n}  \;/\; SO(r,s) \ltimes \mathbb{R}^{n},
\end{equation}
and related to the effective one by ``forgetting'' about the non-effective part $\mathbb{R}^{n}$; in practice this will mean quotienting by $I^{\ei{a}}$ in the expressions below.

The induced connection can be read off from Eqs. \eqref{ExtendedConnection}, \eqref{TractorConnection}, \eqref{Projective compactification review: identities on schouten} and Theorem \ref{Theorem: connection coefficient in constructed gauge}: fixing a section $\tau$, and denoting by $\bar W^A_a$ the splitting of the projective tractor bundle on $\mathcal{H}_{\pm}$ determined by $\tau_0 = |\lambda_0|^{\frac{1}{2}}$, it is given by
\begin{align}\label{Induced non effective Cartan connection}
	\begin{aligned}
	&\nabla_a \big(\tau_0^{-1} X^{A}\rotpi^{\ei{a}}_{\,\,A}\big)=  \tau_0^{-1}\bar W^{A}_a \rotpi^{\ei{a}}_{\,\,A},\\[0.3em]
	&\nabla_a \big(\tau_0^{-1} \bar W^{A}_b\rotpi^{\ei{a}}_{\,\,A}\big)=\mp \tau_0^{-2}\,\bar h_{ab}\;\left(\tau_0^{-1}X^{A}\rotpi^{\ei{a}}_{\,\,A}\right) + \tau_0^{-1}\iota^*\upsilon_{ab} \,I^{\ei{a}},\\[0.3em]
	&\nabla_a I^{\ei{a}}=0.
	\end{aligned}
\end{align}
with $\iota^*\upsilon_{ab} = \iota^*\big( \pscale^{-1} \left( \nu^{-1}\bP_{ab} -q_{ab}\right)\big)$.

 There is some interplay between Lemma~\ref{boundary gauge change of tau} and this induced Cartan connection on which we will now elaborate.

Indeed, this lemma shows that, although all the projective scales $\tau$ coincide in restriction to the boundary, the restriction of the corresponding gauges $L_{\ei{a}}$ actually depends on the details of their extensions in the interior. Nevertheless, the dependence is relatively tame: it only depends on the section of $\eB$ determined by $\tau$.  

\begin{theo}\label{Change of upsilon}\mbox{}
	
Let $\tau$, $\tilde{\tau}$ be two distinguished projective scales as given by Theorem~\ref{Projective compactification review: CapGoverBoundaryDefiningFunction} and let us write 
\[ \tilde\tau= \tau\; \mp \omega\pscale.\] 
for some function $\omega$ in a neighbourhood of $\mathcal{H}_{\pm}$. Then

\begin{itemize}
\item The connection component $\upsilon_{ab}$ of the extended tractor connection has, when \textbf{pulled back to the boundary} along the canonical injection $\iota : \B \hookrightarrow M$, the following transformation rule: 
\begin{equation}\label{change in upsilon along boundary} \iota^*\tilde{\upsilon}_{ab}=\iota^*\upsilon_{ab}\; +\iota^*\Big(\,\tau_0\, \nabla_a\nabla_b \omega \pm  \tau^{-1}_0\, \bar{h}_{ab}\omega  \,\Big).\end{equation}
\item In particular, if $\tau$ and $\tilde{\tau}$ define the same section
\[ \hat{\tau}: \mathcal{H}_{\pm} \rightarrow \eB \]
of $\eB \to \mathcal{H}_{\pm}$, i.e. $\omega=\sigma \omega_1$, $\omega_1\in \E(-1)$, then the $\iota^*\tilde\upsilon_{ab}=\iota^*\upsilon_{ab}$.
\end{itemize}
 Hence, each section of $\eB$ globally defines $\upsilon_{ab} \in \E^\B_{(ab)}(1)$ and the boundary value of the extended tractor connection only depends on this section.
\end{theo}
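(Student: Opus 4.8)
The plan is to derive both items as essentially immediate consequences of Lemma~\ref{boundary gauge change of tau}, where the substantive computation has already been carried out; the only genuinely new ingredient is a careful bookkeeping of which terms survive the pullback $\iota^*$.

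For the first item, I would apply $\iota^*$ to the identity
\[
\tilde\upsilon_{ab}=\upsilon_{ab}+\big(\tau\,\nabla_a\nabla_b\omega\pm\tau^{-1}\bar h_{ab}\,\omega\big)+O(\pscale,\nabla\pscale)
\]
of Lemma~\ref{boundary gauge change of tau}. The point to verify is that $\iota^*$ annihilates the remainder: by construction each term of $O(\pscale,\nabla\pscale)$ is a smooth tensor multiplied either by $\pscale$ or by a factor $\nabla_c\pscale$ carried by one of the free indices $a,b$, and since $\pscale$ is a boundary defining density one has $\iota^*\pscale=0$ and $\iota^*(\nabla_c\pscale)=0$; hence the whole remainder lies in $\ker\iota^*$. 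Using $\iota^*\tau=\tau_0$, together with the fact that $\bar h_{ab}$ is an intrinsic boundary tensor, then reproduces exactly \eqref{change in upsilon along boundary}. I would also record here that, $\B$ being totally geodesic, $\iota^*(\nabla_a\nabla_b\omega)$ coincides with the Hessian of $\iota^*\omega$ for the distinguished boundary scale $\tau_0$, so the right-hand side of \eqref{change in upsilon along boundary} is genuinely intrinsic to $\mathcal{H}_\pm$ — this is what makes the closing assertion meaningful.

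For the second item, I would substitute $\omega=\pscale\,\omega_1$ with $\omega_1\in\E(-1)$ into \eqref{change in upsilon along boundary}. The term $\pm\tau_0^{-1}\bar h_{ab}\,\pscale\,\omega_1$ is manifestly killed by $\iota^*$. For the Hessian term I would expand, using $\nabla_c\nabla_d\pscale=-\pscale\,P_{cd}$ (which follows from $\nabla_cI_B=0$, exactly as in the proof of Lemma~\ref{boundary gauge change of tau}),
\[
\nabla_a\nabla_b(\pscale\,\omega_1)=-\pscale\,P_{ab}\,\omega_1+2\,\nabla_{(a}\pscale\,\nabla_{b)}\omega_1+\pscale\,\nabla_a\nabla_b\omega_1 ,
\]
every summand of which carries a factor $\pscale$ or $\nabla_c\pscale$; hence $\iota^*\big(\nabla_a\nabla_b(\pscale\,\omega_1)\big)=0$ and therefore $\iota^*\tilde\upsilon_{ab}=\iota^*\upsilon_{ab}$. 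Finally, for the global conclusion I would observe that two distinguished scales $\tau,\tilde\tau$ (both restricting to $|\lambda_0|^{\frac12}$ on $\B$) determine the same section $\hat\tau$ of $\eB\to\mathcal{H}_\pm$ precisely when $\tilde\tau=\tau\mp\omega\pscale$ with $\omega|_{\B}=0$, i.e.\ $\omega=\pscale\,\omega_1$: indeed a direct computation gives $\big(D_A(\omega\pscale)\big)\big|_{\B}=\big(\omega|_{\B}\big)\,I_A$, so the two lifts $D_A\tau$, $D_A\tilde\tau$ of $D_A|\lambda_0|^{\frac12}$ agree along $\B$ iff $\omega|_{\B}=0$, in agreement with the $\R$-action of Definition~\ref{DefinitionExtendedBoundary}. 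Combined with the second item this shows $\iota^*\upsilon_{ab}$ depends on $\tau$ only through $\hat\tau$; since $\eB$ is a principal $\R$-bundle it admits global sections, and gluing the local expressions of Theorem~\ref{Theorem: connection coefficient in constructed gauge} as explained there assembles them into a well-defined $\upsilon_{ab}\in\E^{\B}_{(ab)}(1)$ controlling the boundary value of the induced connection \eqref{Induced non effective Cartan connection}.

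I do not expect a serious obstacle: the real content is already in Lemma~\ref{boundary gauge change of tau}, and the single delicate point is to confirm that every remainder term there is indeed of the boundary-tangentially-vanishing form claimed — i.e.\ that no factor $\nabla_c\pscale$ ever appears internally contracted rather than on a free index $a,b$ — so that applying $\iota^*$ is legitimate; this is checked by inspecting the (elementary but lengthy) computation behind that lemma.
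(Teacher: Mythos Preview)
Your proposal is correct and follows essentially the same approach as the paper: the first item is obtained by pulling back the identity from Lemma~\ref{boundary gauge change of tau}, and the second by expanding $\nabla_a\nabla_b(\pscale\,\omega_1)$ via the Leibniz rule together with $\nabla_a\nabla_b\pscale=-\pscale\,P_{ab}$. Your write-up is more detailed than the paper's (in particular the justification that $\iota^*$ kills the $O(\pscale,\nabla\pscale)$ remainder and the discussion linking $\omega|_{\B}=0$ to equality of sections of $\eB$), but the underlying argument is identical.
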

\begin{proof}
The first point is just obtained by pulling back the result of Lemma~\ref{boundary gauge change of tau} along the canonical injection.
To prove the second point, observe that if $\omega=\pscale \omega_1$ then:
\[ \nabla_a\nabla_b \omega = \pscale \nabla_a\nabla_b \omega_1 + 2\nabla_{(a} \pscale \nabla_{b)}\omega_1 + \underbrace{(\nabla_a\nabla_b\pscale)}_{-P_{ab}\pscale} \omega_1.\]
\end{proof}

\subsection{Carrollian structure on the extended boundary}

The simple transformation rule \eqref{change in upsilon along boundary} for the component $\upsilon_{ab}$ of the induced tractor connection \eqref{Induced non effective Cartan connection} suggests that it would perhaps more naturally live as a structure on the extended boundary $\eB$. In this section, we shall show that this intuition is correct and use the properties of the non-effective connection on $\mathcal{H}_{\pm}$ to construct an \emph{effective} Cartan geometry on $\eB$.

First, let us make some general remarks related to the fact that $\eB \to \mathcal{H}_{\pm}$ is a principal $\R$-bundle and that it is equipped with a preferred degenerate metric $h_{ab}:=\pi^*(\tau_0^{-2} \bar{h}_{ab})$ of signature $(1,r,s)$, where $(r,s)= (1,n-2)$ on $\mathcal{H}_{+}$ and $(r,s)= (0,n-1)$ on $\mathcal{H}_{-}$. In a local trivialisation we shall write points of $\eB$ as couples $p=(x,u)$ with $n^a = \partial_u$ the generator of the $\R$-action. The pair $(h_{ab}, n^a)$ satisfies
\begin{align*}
	n^b h_{ab} &=0, &\mathcal{L}_n h_{ab} &=0,
\end{align*}
 and is a (weak) Carrollian geometry in the sense of \cite{duval_carroll_2014}. It naturally defines a reduction of the frame bundle:
\begin{defi}
	The pair $(h_{ab}, n^a)$ on $\eB$ defines a reduction of the structure group of the frame bundle from $GL(n)$ to $SO(r,s) \ltimes \R^{n-1}$:
	
	Let $\mathscr{F}$ denote the bundle of $1$-jets at $(0,0)$ of local diffeomorphisms $\Phi : \mathbb{R}^{n-1}\times\R \rightarrow \eB$ such that there is a local diffeomorphism $\phi : \R^{n-1}\rightarrow \B$ making the following diagram commute:
	\begin{center}
		\begin{tikzcd}\mathbb{R}^{n-1}\times\R \arrow[r,"\Phi"]  \arrow[d,"\textnormal{Proj}_1"]& \eB \arrow[d,"\pi"] \\ \R^{n-1} \arrow[r,"\phi"]&\B \end{tikzcd}
	\end{center}
	and: \begin{enumerate}
		\item $\Phi_*\frac{\partial}{\partial x^n} =\frac{\partial}{\partial u}$,
		\item $\Phi^*h= \eta_{ij}\dd x^i\dd x^j$, $\eta=\textnormal{diag}(\mp, 1, \dots ,1)$
	\end{enumerate}
	This is a principal bundle with structure group $SO(r,s) \ltimes \mathbb{R}^{n-1}$ formed by matrices of the form:
	\[\begin{pmatrix}1 & \chi \\ 0 & A \end{pmatrix}, \quad A \in SO(r,s).\]
	The right action of $(\R,+)$ on $\eB$ lifts to an action on $\mathscr{F}$ which commutes with the action of $SO(r,s) \ltimes \R^{n-1}$.
\end{defi}
It is important to observe at this point that this bundle admits specific types of (local) sections $t$, coming from sections $\hat{\tau}$ of $\eB$, that have the equivariance property: \[ t\circ R_\lambda= R_\lambda \circ t, \quad \lambda \in \R.\] Throughout, we shall only work with sections of this type; that we will refer to as \emph{admissible} frames. Any two such admissible frames are related by a map $\mathcal{H}_{\pm} \rightarrow SO(r,s)\ltimes\R^{n-1}$ (so they are transported by the right action along the fibre). By construction these frames must be of the form $(n^a, m_i{}^a)$ and the corresponding dual co-frames $(\nabla_a u , m^i{}_a)$.

 Since, under our assumption, $\nabla_n u=1$, the zero set of $u$ always define a section $\hat{\tau} : \mathcal{H}_{\pm} \to \eB$. The other way round a section $\hat{\tau}$ of $\eB \to \mathcal{H}_{\pm}$ uniquely defines a function $u$ as the unique function such that $\nabla_n u=1$ and $u\circ \hat{\tau}=0$. Therefore, one sees that a section $\hat{\tau}$ defines, up to $SO(r,s)$, an admissible frame $(\nabla_a u , m^i{}_a)$.
 
  This remark will serve to state the following.
 
\begin{theo}\mbox{}\label{Theorem: Cartan geometry on Ti/Spi}
	
	The non-effective Cartan geometry \eqref{Induced non effective Cartan connection} on $\mathcal{H}_{\pm}$ induces on the extended boundary $\eB$ a Cartan geometry modelled on 
	\begin{equation}
		SO(1,n-1)\ltimes \mathbb{R}^{n}  \;/\; SO(r,s) \ltimes \mathbb{R}^{n-1}.
	\end{equation}
	These homogenous spaces are the (pseudo)-Carrollian spaces $\Ti$ and $\Spi$ from \cite{Figueroa-OFarrill:2021sxz} and in particular yield  \underline{effective} Cartan geometries.
	
	In an admissible frame, given by a section $\hat{\tau}$ of $\eB \to \mathcal{H}_{\pm}$, the connection is given by
\begin{align}\label{Induced effective Cartan geometry}
	\begin{aligned}
	&\nabla_a \Big(\tau_0^{-1} X^{A}\rotpi^{\ei{a}}_{\,\,A}\Big)=  \tau_0^{-1}\, \pi^* \bar W^{A}_a\rotpi^{\ei{a}}_{\,\,A} + \nabla_a u\, I^{\ei{a}},\\[0.3em]
	 &\nabla_a \Big( \tau_0^{-1} \, \pi^*\bar W^{A}_b\rotpi^{\ei{a}}_{\,\,A}\Big)=\mp h_{ab} \,\left(\tau_0^{-1} X^{A} \rotpi^{\ei{a}}_{\,\,A}\right) + \Big(\tau_0^{-1} \upsilon_{ab}\; \pm u \, h_{ab}\Big) I^{\ei{a}},\\[0.3em]
	 &\nabla_a I^{\ei{a}}=0.
	 \end{aligned}
\end{align}
\end{theo}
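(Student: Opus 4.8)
The plan is to use the reduced frame bundle $\mathscr{F}\to\eB$ defined just above as the Cartan bundle: its structure group is $P'=SO(r,s)\ltimes\R^{n-1}$, which is exactly the isotropy group of the model $G/P'$ in the statement, with $G=SO(1,n-1)\ltimes\R^{n}=ISO(1,n-1)$. An admissible local section of $\mathscr{F}$ is, by the discussion preceding the theorem, the same datum as a local section $\hat\tau$ of $\eB\to\mathcal{H}_{\pm}$ --- equivalently, by Theorem~\ref{Projective compactification review: CapGoverBoundaryDefiningFunction}, a distinguished scale $\tau$ near the relevant boundary point --- together with an $SO(r,s)$-orthonormal co-frame for $(\mathcal{H}_{\pm},\bar h)$; moreover $\hat\tau$ determines the fibre coordinate $u$ on $\eB$ by $\nabla_n u=1$, $u\circ\hat\tau=0$. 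Relative to such a section, identify the rank-$(n+1)$ bundle appearing in \eqref{Induced non effective Cartan connection}, pulled back to $\eB$, with $\R^{n+1}$ through the frame $\{\,I^{\ei{a}},\,\tau_0^{-1}X^{A}\rotpi^{\ei{a}}_{\,\,A},\,\tau_0^{-1}\bar W^{A}_b\rotpi^{\ei{a}}_{\,\,A}\,\}$, $b=1,\dots,n-1$; then \eqref{Induced effective Cartan geometry} reads off a $\mathfrak g$-valued $1$-form on $\eB$ and hence, by the usual soldering, a candidate Cartan connection $\omega$ on $\mathscr{F}$. The only genuinely new ingredient, compared with the boundary connection \eqref{Induced non effective Cartan connection}, is the term $\nabla_a u\,I^{\ei{a}}$ in the $X$-row, together with the compensating $\pm u\,h_{ab}$ in the $\bar W$-row; these are precisely what is needed for $\omega$ to be non-degenerate on the $n$-manifold $\eB$.

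The first step is to check that \eqref{Induced effective Cartan geometry} is, up to a $P'$-gauge transformation, independent of the admissible section used to write it, so that it defines a global $1$-form on $\mathscr{F}$ (globality is then automatic, as $\mathscr{F}$ is a global bundle). Changing the $SO(r,s)$-part of the section is the usual change of $\bar h$-orthonormal co-frame and acts on \eqref{Induced effective Cartan geometry} through $\mathrm{Ad}$ of $SO(r,s)\subset P'$, exactly as for the boundary geometry. Changing $\hat\tau$ by a function $\mu$ on $\mathcal{H}_{\pm}$ shifts $u$ by $\mu$ and, at the level of $1$-jets, is the $\R^{n-1}$-part of a $P'$-transformation with gauge function $d\mu$ (read off in the co-frame); by Theorem~\ref{Change of upsilon}, $\upsilon_{ab}$ pulled back to $\mathcal{H}_{\pm}$ depends only on $\hat\tau$ and changes by $\iota^*(\tau_0\nabla_a\nabla_b\mu\pm\tau_0^{-1}\bar h_{ab}\mu)$. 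One then verifies that the combination $\tau_0^{-1}\upsilon_{ab}\pm u\,h_{ab}$ of \eqref{Induced effective Cartan geometry} transforms by exactly the covariant differential $\nabla_a\nabla_b\mu$ of the gauge function relative to the $\mathfrak{so}(r,s)$-connection of $\bar h$ --- the $h_{ab}$-terms cancelling because $h_{ab}=\tau_0^{-2}\bar h_{ab}$ --- which is precisely the Maurer--Cartan contribution to the transformation of the $\R^{n-1}$-component of an $\mathfrak{iso}(1,n-1)$-valued Cartan connection; together with the bracket terms fixed by the structure constants of $\mathfrak{iso}(1,n-1)$ this gives $R_h^*\omega=\mathrm{Ad}(h^{-1})\omega$ for $h\in P'$. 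This matching --- that the ``extension-dependence'' of $\upsilon$ recorded in Theorem~\ref{Change of upsilon} is exactly the cocycle required for $\omega$ to be a $P'$-gauge potential --- is the step I expect to be the main obstacle; the rest is bookkeeping with \eqref{Induced non effective Cartan connection} and the definitions of $\eB$ and $\mathscr{F}$.

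It then remains to verify the Cartan-connection axioms for $\omega$: that $\omega$ reproduces the fundamental vector fields of the $P'$-action (immediate from the soldering identification); that $R_h^*\omega=\mathrm{Ad}(h^{-1})\omega$ (the gauge-covariance just established); and that $\omega_p\colon T_p\mathscr{F}\to\mathfrak g$ is a linear isomorphism for every $p$. For the last point it suffices that the $\mathfrak g/\mathfrak p'$-component of $\omega$ --- the soldering form --- be non-degenerate on $\eB$. Now $\mathfrak g/\mathfrak p'$ is $n$-dimensional, splitting into the $(n-1)$-dimensional $\mathfrak{so}(1,n-1)/\mathfrak{so}(r,s)$ and the $1$-dimensional $\R^{n}/\R^{n-1}$; from \eqref{Induced effective Cartan geometry} their soldering forms are, respectively, the pulled-back co-frame $\pi^*m^i{}_a$ of $\mathcal{H}_{\pm}$ (from the $X\!\to\!\bar W_a$ entry) and $\nabla_a u=du$ (from the $X\!\to\!I^{\ei{a}}$ entry). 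Since the $\pi^*m^i{}_a$ span the horizontal co-directions while $du$ is non-degenerate along the fibre $n^a=\partial_u$ ($\nabla_n u=1$), $\{\pi^*m^1{}_a,\dots,\pi^*m^{n-1}{}_a,\nabla_a u\}$ is a co-frame of $\eB$ and the soldering form is non-degenerate. This is exactly why the term $\nabla_a u\,I^{\ei{a}}$ had to be added: on $\mathcal{H}_{\pm}$ the model quotient $G/(SO(r,s)\ltimes\R^{n})$ is only $(n-1)$-dimensional and $I^{\ei{a}}$ lies in the kernel, whereas on $\eB$ the direction spanned by $I^{\ei{a}}$ is soldered to the fibre via $du$.

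Finally, the identification of the model. The pair $(G,P')=(SO(1,n-1)\ltimes\R^{n},\,SO(r,s)\ltimes\R^{n-1})$ has $\dim G/P'=n$, and $P'$ contains no non-trivial normal subgroup of $G$: such a subgroup would have to lie in the translations $\R^{n}$, hence in $\R^{n}\cap P'=\R^{n-1}=v^{\perp}$, but $v^{\perp}$ is not $SO(1,n-1)$-invariant. So the model --- hence the Cartan geometry on $\eB$ --- is effective, in contrast with the non-effective geometry on $\mathcal{H}_{\pm}$, whose kernel is the full $\R^{n}$. Comparing with \cite{Figueroa-OFarrill:2021sxz}: $G/P'$ fibres over $G/(SO(r,s)\ltimes\R^{n})=SO(1,n-1)/SO(r,s)$ --- equal to $H^{n-1}$ for $\mathcal{H}_{-}$ and $dS^{n-1}$ for $\mathcal{H}_{+}$ --- with fibre $\R^{n}/\R^{n-1}\cong\R$, and the $G$-invariant degenerate metric and null direction on $G/P'$ obtained by pulling back the constant-curvature metric and the vertical direction agree with $(h_{ab},n^{a})$; these are the (pseudo-)Carrollian homogeneous spaces $\Ti$ and $\Spi$ of \cite{Figueroa-OFarrill:2021sxz}, which completes the proof.
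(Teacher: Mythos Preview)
Your proposal is correct and follows essentially the same approach as the paper: the crux in both is the verification that, under a change of section $\hat\tau\mapsto\hat\tau+\mu$, the combination $\tau_0^{-1}\upsilon_{ab}\pm u\,h_{ab}$ transforms by exactly $\nabla_a\nabla_b\mu$ (the $h_{ab}$-contributions cancelling), which is the required Cartan gauge cocycle; the paper does this computation in two lines and then defers to \cite{Herfray:2021qmp}, whereas you spell out the remaining Cartan-connection axioms and the effectiveness argument explicitly. The one point the paper adds that you omit is a brief check of torsion-freeness, which it obtains from the symmetry $\upsilon_{[ab]}=0$ visible in the explicit expression $\iota^*\upsilon_{ab}=\iota^*\!\big(\pscale^{-1}(\nu^{-1}P_{ab}-q_{ab})\big)$ below \eqref{Induced non effective Cartan connection}.
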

  By results from \cite[Section 2]{Herfray:2021qmp} these Cartan geometries are equivalent to strong Carrollian geometries in the sense of \cite{duval_carroll_2014} (see also appendix of \cite{bekaert_connections_2018} for more details on these geometry).
\begin{proof}
Let $\tau$ and $\tilde\tau = \tau \pm \omega \pscale$, we only need to check that the transformation rules of the above objects are consistent with those of a Cartan connection. In fact, only the term containing $\upsilon$ might pose a problem.

Since by definition $u\circ \tau =0$ and $u\circ \tilde\tau =\omega$, one has $u\mapsto u -\omega$ and, from \eqref{change in upsilon along boundary},
\begin{align*} \tau_0^{-1}\upsilon_{ab} \pm u\, \, h_{ab} \mapsto & \quad \tau_0^{-1}\upsilon_{ab}+ \nabla_a\nabla_b\omega \pm \, h_{ab}\omega \pm (u-\omega) \, h_{ab}\\
	&=\tau_0^{-1} \upsilon_{ab}\pm u \, h_{ab} +  \nabla_a\nabla_b\omega
\end{align*}
which is the correct transformation rule, see e.g. \cite[Section 2]{Herfray:2021qmp}. We only need to check that this geometry is torsion free, this can be done by directly evaluating the curvature and making use of $\upsilon_{[ab]}=0$ (see the explicit expression below $\eqref{Induced non effective Cartan connection}$).
\end{proof}

The precise relation between this effective Cartan geometry on $\eB$ and the non-effective Cartan geometry realised on $\mathcal{H}_{\pm}$ is given by the following.

\begin{prop}\label{Proposition: non effective Cartan geometry from effective}
	The curvature $F^{\ei{a}}{}_{\ei{b}}{}_{cd}$ of the Cartan geometry given by Theorem \ref{Theorem: Cartan geometry on Ti/Spi} satisfies $n^c F^{\ei{a}}{}_{\ei{b}}{}_{cd}=0$. In particular, one can quotient by the action of $(\R,+)$  and obtain in this way a (non-effective) Cartan geometry on $\mathcal{H}_{\pm}$. The resulting connection then coincides with \eqref{Induced non effective Cartan connection}.
\end{prop}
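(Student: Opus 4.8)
The plan is to compute the curvature of the effective Cartan connection \eqref{Induced effective Cartan geometry} directly in an admissible frame and read off its components along $n^c=\partial_u^c$. First I would observe that, in an admissible frame coming from a section $\hat\tau$, the structure equations \eqref{Induced effective Cartan geometry} for $\nabla_a(\tau_0^{-1}X^A\rotpi^{\ei a}_{\,\,A})$ and $\nabla_a I^{\ei a}$ have coefficients $\tau_0^{-1}\pi^*\bar W^A_a$, $\nabla_a u$, $\mp h_{ab}$ and $0$ that are \emph{pulled back} from $\mathcal{H}_\pm$ (respectively are $\nabla_a u$, which satisfies $\mathcal L_n(\nabla_a u)=\nabla_a(\mathcal L_n u)=0$ since $\mathcal L_n u = n^c\nabla_c u =1$, and $h_{ab}$, which is $n$-invariant by the Carrollian conditions $n^b h_{ab}=0$, $\mathcal L_n h_{ab}=0$ recalled before the Definition). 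The only term that is \emph{not} manifestly $n$-invariant is $\tau_0^{-1}\upsilon_{ab}\pm u\,h_{ab}$ in the second structure equation, because of the explicit $u$. Therefore the curvature component one must examine is the one in the $I^{\ei a}$ direction of the $\nabla_{[c}\nabla_{d]}(\tau_0^{-1}\pi^*\bar W^A_b\rotpi^{\ei a}_{\,\,A})$ bracket, and the potentially dangerous contribution to $n^cF^{\ei a}{}_{\ei b}{}_{cd}$ is $n^c\nabla_{[c}(\tau_0^{-1}\upsilon_{d]b}\pm u\,h_{d]b})\,I^{\ei a}$ together with cross terms mixing $\mp h$ and $\nabla u$.

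The key computation is then: the $I^{\ei a}$-component of $F^{\ei a}{}_{\ei b}{}_{cd}$ equals (up to terms built from the $\mathcal{H}_\pm$-data, which are $n$-contracted to zero since $n^c h_{cd}=0$, $n^c\bar W^A_c=0$, etc.) the exterior covariant derivative of the ``soldering plus $u$'' one-form, i.e. schematically $2\nabla_{[c}\big(\tau_0^{-1}\upsilon_{d]b}\pm u\,h_{d]b}\big) \mp 2h_{[c|b}\nabla_{|d]}u$. Contracting with $n^c$ and using $n^c h_{cb}=0$, $n^c h_{cd}=0$, $n^c\nabla_c u=1$, $\mathcal L_n h_{bd}=0$ and the fact that $\upsilon_{ab}$ is pulled back from $\mathcal{H}_\pm$ (so $n^c\nabla_c(\tau_0^{-1}\upsilon_{db})=0$) leaves precisely $\pm h_{db}\,n^c\nabla_c u \mp h_{db}\,n^c\nabla_c u = 0$ — the term $\pm u\,h$ generates, upon $n$-contraction, exactly the piece that cancels against the cross term $\mp h_{[c|b}\nabla_{|d]}u$. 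Once $n^cF^{\ei a}{}_{\ei b}{}_{cd}=0$ is established, basic Cartan-geometry theory (a curvature that annihilates the vertical vector field of a principal bundle action descends to the quotient) gives a Cartan connection on the quotient $\eB/\R=\mathcal{H}_\pm$ modelled on $SO(1,n-1)\ltimes\R^n / SO(r,s)\ltimes\R^n$; and comparing the descended structure equations with \eqref{Induced effective Cartan geometry} term by term — the $u$-dependence drops out on the quotient and one is left exactly with \eqref{Induced non effective Cartan connection}, using $\iota^*\upsilon_{ab}=\iota^*(\pscale^{-1}(\nu^{-1}\bP_{ab}-q_{ab}))$ — identifies it with the non-effective connection \eqref{Induced non effective Cartan connection}.

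The main obstacle I expect is purely bookkeeping: making sure that \emph{all} the non-$h$, non-$\nabla u$ pieces of the curvature are genuinely pulled back from $\mathcal{H}_\pm$ and hence killed by $n^c$, and in particular that $\upsilon_{ab}$ as it appears in \eqref{Induced effective Cartan geometry} is honestly $\pi^*$ of a tensor on $\mathcal{H}_\pm$ (this is the content of Theorem \ref{Change of upsilon}, which says the boundary value of $\upsilon_{ab}$ depends only on the section $\hat\tau$ and hence, in the admissible frame attached to $\hat\tau$, is a fixed $\mathcal{H}_\pm$-tensor). Granting that, the cancellation is a one-line check and the descent to the quotient is formal. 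I would also remark that torsion-freeness, already noted in the proof of Theorem \ref{Theorem: Cartan geometry on Ti/Spi} via $\upsilon_{[ab]}=0$, guarantees the descended Cartan connection is the normal one, so the identification with \eqref{Induced non effective Cartan connection} is complete.
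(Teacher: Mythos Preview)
Your argument for $n^cF^{\ei a}{}_{\ei b}{}_{cd}=0$ is correct and in fact more explicit than the paper, which simply says this ``can be checked by a direct computation''; the cancellation you isolate between $\pm(\nabla_{[c}u)\,h_{d]b}$ and the cross term $\mp h_{[d|b|}\nabla_{c]}u$ is exactly the mechanism, and the remaining pieces vanish under $n^c$ for the reasons you give (pulled-back data, $\nabla n=0$, $n^c h_{cb}=0$).

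Where your route diverges from the paper is in the descent step. You invoke an abstract principle (``a Cartan connection whose curvature kills the vertical field descends to the quotient'') and then compare structure equations. The paper instead works concretely with tractor fields: it writes a general section $\tilde T^{\ei a}=\tilde T^0 I^{\ei a}+\tilde T^b(\tau_0^{-1}\pi^*\bar W^A_b\rotpi^{\ei a}_{\,\,A})+\tilde T^-(\tau_0^{-1}X^A\rotpi^{\ei a}_{\,\,A})$, solves the vertical parallel-transport equation $n^c\nabla_c\tilde T^{\ei a}=0$ explicitly as $(\tilde T^0,\tilde T^a,\tilde T^-)=(T^0-uT^-,T^a,T^-)$ with $(T^0,T^a,T^-)$ constant along the fibres, and then checks by hand that differentiating such a $\tilde T^{\ei a}$ in horizontal directions reproduces \eqref{Induced non effective Cartan connection}. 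Your approach is cleaner conceptually but leaves implicit the $\R$-equivariance of the connection form needed for the abstract descent (this is true, and is hidden in your observation that all coefficients are either pulled back or depend on $u$ only affinely); the paper's approach is more pedestrian but entirely self-contained and makes the identification with \eqref{Induced non effective Cartan connection} a line-by-line check rather than an appeal to a general principle.
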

\begin{proof}
The constraint on the curvature can be checked by a direct computation. Then a tractor field 
\begin{equation*}
\tilde T^{\ei{a}} = \tilde T^0 I^{\ei{a}} + \tilde T^b \,\Big(\tau_0^{-1} \pi^*\bar W^{A}_b\rotpi^{\ei{a}}_{\,\,A}\Big) + \tilde T^-\; \Big(\tau_0^{-1} X^{A}\rotpi^{\ei{a}}_{\,\,A}\Big)
\end{equation*}
is parallel transported along the fibres of $\eB \to \mathcal{H}_{\pm}$ if $n^c\nabla_cT^{\ei{a}}=0$ i.e. iff
\begin{align*}
	\partial_u \tilde T^0 +  \tilde T^- &=0,&
	\partial_u \tilde T^a  &=0,& \partial_u \tilde T^-  &=0&.
\end{align*}
In other words, vertically constant tractors are related to tractors on $\mathcal{H}_{\pm}$
\begin{equation*}
	T^{\ei{a}} = T^0 I^{\ei{a}} + T^b \,\Big(\tau_0^{-1}\bar W^{A}_b\rotpi^{\ei{a}}_{\,\,A}\Big) + T^-\; \Big(\tau_0^{-1} X^{A}\rotpi^{\ei{a}}_{\,\,A}\Big)
\end{equation*}
via  $(\tilde T^0, \tilde T^a, \tilde T^-) = ( T^0 - u T^-, T^a, T^-)$. One can now check that differentiating $\tilde T^{\ei{a}}$ in the horizontal direction defines on $\mathcal{H}_{\pm}$ a connection identical with \eqref{Induced non effective Cartan connection}.

\end{proof}

\subsection{Projective structure on the extended boundary}

From Theorem \ref{Theorem: Cartan geometry on Ti/Spi}, and results of \cite[Section 2]{Herfray:2021qmp}, we obtained that $\eB$ is equipped with a torsion free linear connection $\nabla$
satisfying
\begin{align}
	\nabla_c h_{ab} &=0, & \nabla_c n^a &=0.
\end{align}
(This is the strong Carrollian from \cite{duval_carroll_2014}). In particular $\eB$ is equipped with a projective connection $[\nabla]$. 

We now want to investigate aspects of this projective geometry. It will prove useful to introduce the following densified objects\footnote{In order not to proliferate notation, we abusively identify the densified objects on $\mathcal{H}_{\pm}$ and their pullback on $\eB$.},
\begin{align*}
	\bar{n}^a &:= \tau_0^{-1} n^a, & \bar h_{ab} = \tau_0^{2} h_{ab}.
\end{align*}
Recall that then $\lvert\detd(\bar h^{ab})\lvert = \tau_0^2$ and therefore the pair $(\bar{n}^a,\bar h_{ab})$ really is equivalent to $(n^a,  h_{ab})$. 

Using $\n^a$ one may reproduce the steps of the proof of~\cite[Proposition 5.1]{RSTA20230042} and obtain the isomorphism:
\begin{lemm} \label{PullBackOfDensitesOnBase}
	\[ \E(1)_{\eB} \simeq \pi^*\E_{\B}(1). \]
\end{lemm}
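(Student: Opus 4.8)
The plan is to imitate the proof of~\cite[Proposition 5.1]{RSTA20230042}, with $\n^{a}$ playing the role of the distinguished densified vector field used there; the one ingredient that is not pure functoriality is a careful count of projective weights, which I would isolate first. Since $\pi:\eB\to\mathcal{H}_{\pm}$ is a principal $\R$-bundle, its vertical line bundle $V:=\ker(\pi_{*})\subset T\eB$ is trivialised by the fundamental vector field $n^{a}=\partial_{u}$ of the action. Instead of $n^{a}$ I would use $\n^{a}=\tau_{0}^{-1}n^{a}$: because $\tau_{0}=|\lambda_{0}|^{\frac12}$ is a nowhere-vanishing section of $\E_{\mathcal{H}_{\pm}}(1)$ away from $\mathcal{H}_{0}$ (the only region considered here), $\n^{a}$ is a nowhere-vanishing section of $V\otimes\pi^{*}\E_{\mathcal{H}_{\pm}}(-1)$ over $\eB$, and therefore defines a canonical isomorphism of density lines
\[ |V|\;\simeq\;\pi^{*}\E_{\mathcal{H}_{\pm}}(1). \]

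Next I would take the top exterior power of the short exact sequence of vector bundles on $\eB$
\[ 0\longrightarrow V\longrightarrow T\eB\xrightarrow{\ \pi_{*}\ }\pi^{*}T\mathcal{H}_{\pm}\longrightarrow 0, \]
obtaining the determinant-of-an-extension isomorphism $\Lambda^{n}T\eB\simeq V\otimes\pi^{*}\Lambda^{n-1}T\mathcal{H}_{\pm}$. Passing to the associated density bundles and invoking the standard identifications $|\Lambda^{n}T\eB|=\E_{\eB}(n+1)$ and $|\Lambda^{n-1}T\mathcal{H}_{\pm}|=\E_{\mathcal{H}_{\pm}}(n)$ together with the previous step, this becomes a canonical isomorphism
\[ \E_{\eB}(n+1)\;\simeq\;\pi^{*}\E_{\mathcal{H}_{\pm}}(1)\otimes\pi^{*}\E_{\mathcal{H}_{\pm}}(n)\;=\;\pi^{*}\E_{\mathcal{H}_{\pm}}(n+1). \]
Finally, $\E_{\eB}(1)$ and $\pi^{*}\E_{\mathcal{H}_{\pm}}(1)$ are canonically oriented real line bundles whose $(n+1)$-st tensor powers are the two sides of this isomorphism, and the isomorphism is orientation-preserving (it is built from the determinant-of-extension map, which is positivity-preserving on densities, and from the positive sections $\n^{a},\tau_{0}$); extracting the positive $(n+1)$-st root then yields the asserted canonical $\E(1)_{\eB}\simeq\pi^{*}\E_{\B}(1)$, where over $\mathcal{H}_{\pm}$ one identifies $\E_{\B}(1)$ with $\E_{\mathcal{H}_{\pm}}(1)$, $\mathcal{H}_{\pm}$ being a hypersurface of $M$ and open in $\partial M$.

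I do not expect a genuine obstacle: the only delicate point is the weight bookkeeping in the first two steps, namely that the single extra dimension of $\eB$ over $\mathcal{H}_{\pm}$ is balanced, at the level of weights, precisely by the factor $\pi^{*}\E_{\mathcal{H}_{\pm}}(1)$ that $\n^{a}$ extracts from the vertical line $V$ — which is exactly why one must use $\n^{a}$ and not the bare $n^{a}$ — after which the root extraction is routine. As an alternative that sidesteps roots, one can instead observe that the Carrollian pair $(h_{ab},n^{a})$ reduces the frame bundle of $\eB$ to $SO(r,s)\ltimes\R^{n-1}$, all of whose elements have $|\det|=1$, so that $\E_{\eB}(1)$ is canonically trivialised by the density taking the value $1$ on every admissible frame, while $\tau_{0}$ trivialises $\E_{\mathcal{H}_{\pm}}(1)$; matching these two trivialisations yields the same map. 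I would finish with the remark that the mere existence of \emph{some} isomorphism is automatic, since projective density bundles of any weight are trivialisable, so that the actual content of the lemma — and what is used in the sequel — is the canonicity of the identification constructed above.
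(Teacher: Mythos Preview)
Your proposal is correct and follows precisely the approach the paper indicates: the paper does not give an independent proof here but simply states that one reproduces the steps of \cite[Proposition~5.1]{RSTA20230042} using $\n^{a}$, and your argument spells out exactly those steps (vertical bundle trivialised by $\n^{a}$, determinant of the short exact sequence $0\to V\to T\eB\to\pi^{*}T\mathcal{H}_{\pm}\to 0$, weight count, root extraction). Your additional remarks on canonicity and the alternative via the frame-bundle reduction are sound but go beyond what the paper records.
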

One may also show that:
\begin{lemm} There is a well-defined Lie derivative operator $\mathcal{L}_n : \mathcal{E}_{\eB}(1) \rightarrow C^\infty(\R).$ 
	\begin{enumerate}\item Projective densities that are pullbacks of densities on $\B$ will be called \textbf{adapted scales}, they can be characterised by: \[ \tau =\pi^*\bar\tau \Leftrightarrow \mathcal{L}_\n\tau=0.\] 
	\item Since dual tractors $\T^*_\eB$ are identified with $J^1\E(1)_\eB$, the Lie derivative $\mathcal{L}_\n$ defines, via the bundle map $j^1_p\sigma \mapsto (\mathcal{L}_\n\sigma)_p$, a canonical tractor field $I^\pti{A}$. Furthermore, we have the short exact sequence of vector bundles:
	\begin{center}
\begin{tikzcd}
 0 \arrow[r] & \pi^*\T^*_{\B} \arrow[r,"\Pi"] & \T^*_{\eB} \arrow[r,"I"] &\R \arrow[r] & 0.
\end{tikzcd}
\end{center}
	\end{enumerate}
\end{lemm}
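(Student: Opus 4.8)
The plan is to trivialise the principal $\R$-bundle $\pi:\eB\to\mathcal{H}_\pm$ locally and reduce every claim to an elementary statement about the vertical derivative $\partial_u$. Over an open set $U\subseteq\mathcal{H}_\pm$ I would fix a trivialisation $\pi^{-1}(U)\cong U\times\R$ with fibre coordinate $u$, so that $n=\partial_u$; two such trivialisations differ by $u\mapsto u+g(x)$, $g\in C^\infty(U)$, under which $\partial_u$ is unchanged. Lemma~\ref{PullBackOfDensitesOnBase} provides a canonical identification $\E(1)_{\eB}\cong\pi^*\E_\B(1)$, which is also where the slightly different normalisations of $\E(1)$ on $\mathcal{H}_\pm$ (dimension $n-1$) and on $\eB$ (dimension $n$) get reconciled once and for all. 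Using it, write $\sigma\in\Gamma(\E(1)_{\eB})$ as $\sigma=F_\sigma\,\tau_0$, with $F_\sigma\in C^\infty(\eB)$ and $\tau_0:=\pi^*|\lambda_0|^{\frac12}$ the canonical nowhere-vanishing density, and set $\mathcal{L}_n\sigma:=n(F_\sigma)$; equivalently, $\mathcal{L}_n\sigma$ is the ordinary Lie derivative of the density $\sigma$ along the honest vector field $n$, read as a function through the flat section $\tau_0$ (for which $\mathcal{L}_n\tau_0=0$). Since $\tau_0$ and the transition functions of $\pi^*\E_\B(1)$ are all pulled back from $\mathcal{H}_\pm$, hence annihilated by $\partial_u$, this is independent of the trivialisation, so $\mathcal{L}_n$ is globally well defined; note that it uses only the $\R$-bundle structure, not the Carrollian connection. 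The characterisation of adapted scales is then immediate: $\tau=\pi^*\bar\tau$ for some $\bar\tau$ iff $F_\tau$ is the pullback of a function on $U$, iff $\partial_uF_\tau=0$, iff $\mathcal{L}_n\tau=0$.

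For the tractor $I^{\pti{A}}$ and the short exact sequence, recall $\T^*_{\eB}=J^1\E(1)_{\eB}$ by definition. Because $\mathcal{L}_n$ is first order, $(\mathcal{L}_n\sigma)(p)=\partial_uF_\sigma(p)$ depends linearly on $j^1_p\sigma$ alone, so $j^1_p\sigma\mapsto(\mathcal{L}_n\sigma)(p)$ is a fibrewise-linear bundle map $\T^*_{\eB}\to\R$, i.e. a section $I^{\pti{A}}$ of $(\T^*_{\eB})^{*}=\T_{\eB}$. I would take for $\Pi$ the canonical map on $1$-jets $\pi^*\T^*_\B\to\T^*_{\eB}$, $j^1_x\bar\sigma\mapsto j^1_p(\pi^*\bar\sigma)$ (well defined and fibrewise linear, since $\pi^*$ is a local linear operation sending $1$-jet-zero sections to $1$-jet-zero sections), and verify exactness of
\[
0\longrightarrow\pi^*\T^*_\B\xrightarrow{\ \Pi\ }\T^*_{\eB}\xrightarrow{\ I\ }\R\longrightarrow0 .
\]
Surjectivity of $I$ is witnessed by $\sigma=u\,\tau_0$, and injectivity of $\Pi$ follows from surjectivity of $d\pi_p$ (so $j^1_p(\pi^*\bar\sigma)=0$ forces $j^1_{\pi(p)}\bar\sigma=0$). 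For exactness in the middle, $\mathrm{im}\,\Pi\subseteq\ker I$ is the adapted-scale characterisation, and for the reverse inclusion, if $\partial_uF_\sigma(p)=0$ then, in a trivialisation with $u(p)=u_0$, the density $\bar\sigma:=F_\sigma(\,\cdot\,,u_0)\,\tau_0$ on $\mathcal{H}_\pm$ satisfies $j^1_p(\pi^*\bar\sigma)=j^1_p\sigma$: the values agree tautologically, and the differentials agree because the vertical component of $dF_\sigma$ at $p$ vanishes by hypothesis while the horizontal components coincide by construction. Hence $\ker I\subseteq\mathrm{im}\,\Pi$.

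I expect no serious obstacle. Once Lemma~\ref{PullBackOfDensitesOnBase} is granted, the only step carrying genuine content is the middle exactness — realising a $1$-jet with vanishing vertical derivative as the $1$-jet of a pulled-back density — and this is settled by the explicit $\bar\sigma$ above; everything else is formal bundle-theoretic bookkeeping, and in particular the Carrollian connection of Theorem~\ref{Theorem: Cartan geometry on Ti/Spi} plays no role in this lemma.
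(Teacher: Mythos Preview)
Your argument is correct. The paper in fact states this lemma without proof (it is introduced by the phrase ``One may also show that''), so there is no authors' proof to compare against; your proposal supplies exactly the routine verification the paper omits. The only minor slip is notational: when you construct $\bar\sigma:=F_\sigma(\,\cdot\,,u_0)\,\tau_0$ on $\mathcal{H}_\pm$, the density factor should be $|\lambda_0|^{1/2}$ on the base rather than its pullback $\tau_0$, but this is harmless since the two are identified under Lemma~\ref{PullBackOfDensitesOnBase}.
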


\begin{prop}
	The pair $(\bar{n}^a, \bar h_{ab})$ defines on $\eB$ two projective tractor field $I^{\pti{a}}$ and $H_{\pti{a}\pti{b}}$ given, in an adapted scale, by
	\begin{align}
		I^{\pti{a}}& = \bar n^a W_a{}^{\pti{a}}, & H_{\pti{a}\pti{b}} & = (\bar h_{ab} - \nabla_a \tau_0\nabla_b \tau_0) Z^a_{\pti{a}}Z^b_{\pti{b}} -  \tau_0 \nabla_a \tau_0 \;2Z^a{}_{(\pti{a}} Y_{\pti{b})} + \tau_0^2 Y_{\pti{a}}  Y_{\pti{b}}.
	\end{align}
	The inner product $H_{\pti{a}\pti{b}}$ is degenerate and $I^{\pti{a}}H_{\pti{a}\pti{b}}=0$.
\end{prop}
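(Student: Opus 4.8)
The plan is a direct computation in an adapted scale, the only real content being the identification of the projectively invariant object hiding behind each formula.

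For $I^{\pti a}$ I would take it to be the canonical tractor of the preceding Lemma (the element of $\T_{\eB}$ dual to the surjection $\T^*_{\eB}\simeq J^1\E(1)_{\eB}\to\R$ induced by $\mathcal{L}_{\n}$), and observe that its component expression in an adapted scale is precisely $\n^a W^{\pti a}_a$. The point to check is that this is independent of the adapted scale chosen: by the characterisation of adapted scales in the preceding Lemma, two adapted scales differ by $\bnabla\mapsto\bnabla+\Upsilon$ with $\Upsilon$ the differential of a function pulled back from $\mathcal{H}_{\pm}$, hence annihilating the vertical vector $n^a$, so $\Upsilon_a\n^a=0$; by \eqref{ChangeOfProjectiveSplitting} this changes $\n^aW^{\pti a}_a$ by $(\Upsilon_a\n^a)X^{\pti a}=0$. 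In particular $I^{\pti a}$ is a genuine tractor field and is nowhere zero, since $n^a$ (the generator of the $\R$-action) is.

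For $H_{\pti a\pti b}$ the key observation is that, by the definition \eqref{Definition: Thomas operator} of the Thomas $D$-operator applied to the weight-$1$ density $\tau_0$, one has $\tau_0 Y_{\pti a}+\nabla_a\tau_0\,Z^a_{\pti a}=D_{\pti a}\tau_0$, so the displayed expression is nothing but
\[
H_{\pti a\pti b}=\bar h_{ab}\,Z^a_{\pti a}Z^b_{\pti b}-D_{\pti a}\tau_0\,D_{\pti b}\tau_0 ,
\]
where $\bar h_{ab}=\tau_0^{2}h_{ab}$ is a fixed weighted tensor on $\eB$ (a pullback from $\mathcal{H}_{\pm}$). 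Since $Z^a_{\pti a}$ is projectively invariant and both $\tau_0$ and the operator $D$ are projectively invariant, the right-hand side is manifestly independent of the choice of scale and therefore defines a symmetric tractor field $H_{\pti a\pti b}$ on $\eB$; expanding $D_{\pti a}\tau_0$ via \eqref{Definition: Thomas operator} and multiplying out returns the component form in the statement.

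Finally, for the degeneracy: working in an adapted scale and using the duality relations $W^{\pti a}_a Z^b_{\pti a}=\delta^b_a$, $W^{\pti a}_a Y_{\pti a}=0$, together with the Carrollian identity $n^b h_{ab}=0$ (equivalently $\n^a\bar h_{ab}=0$) and $\n^a\nabla_a\tau_0=0$ (valid in any adapted scale since $\mathcal{L}_{\n}\tau_0=0$), one computes
\[
I^{\pti a}H_{\pti a\pti b}=\n^c W^{\pti a}_c\bigl(\bar h_{ab}Z^a_{\pti a}Z^b_{\pti b}-D_{\pti a}\tau_0\,D_{\pti b}\tau_0\bigr)=\n^a\bar h_{ab}\,Z^b_{\pti b}-(\n^c\nabla_c\tau_0)\,D_{\pti b}\tau_0=0 .
\]
As $I^{\pti a}\neq0$ everywhere, this shows $H_{\pti a\pti b}$ is degenerate, with $I^{\pti a}$ spanning (at least) its kernel. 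The step I expect to require the most care is the first: checking that the component form of the canonical Lie-derivative tractor of the preceding Lemma is indeed $\n^aW^{\pti a}_a$ in an adapted scale — that is, that the divergence-type correction in the weighted Lie derivative $\mathcal{L}_{\n}$ vanishes in such scales — and keeping the density weights of $\n^a$, $\bar h_{ab}$ and $\tau_0$ consistent throughout; the remainder is routine bookkeeping with \eqref{Definition: Thomas operator} and \eqref{ChangeOfProjectiveSplitting}.
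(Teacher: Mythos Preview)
Your argument is correct and close in spirit to the paper's, which simply checks that both displayed expressions are unchanged under a change of adapted scale $\nabla\mapsto\nabla+\Upsilon$ (with $n^a\Upsilon_a=0$), using the transformation rules~\eqref{ChangeOfProjectiveSplitting} for $Y_{\pti a}$, $W^{\pti a}_a$ together with $\nabla_a\tau_0\mapsto\nabla_a\tau_0+\Upsilon_a\tau_0$. Your rewriting $H_{\pti a\pti b}=\bar h_{ab}\,Z^a_{\pti a}Z^b_{\pti b}-D_{\pti a}\tau_0\,D_{\pti b}\tau_0$ is a genuine improvement: it exhibits invariance under \emph{any} projective change of scale (not only adapted ones), and your identification of $I^{\pti a}$ with the canonical tractor of the preceding lemma gives a coordinate-free origin for that object, whereas the paper just verifies invariance of the component expression.

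One point you should flag rather than gloss over: expanding $-D_{\pti a}\tau_0\,D_{\pti b}\tau_0$ gives a $-\tau_0^2\,Y_{\pti a}Y_{\pti b}$ contribution, not the $+\tau_0^2\,Y_{\pti a}Y_{\pti b}$ printed in the statement. If one runs the paper's own term-by-term invariance check, the $Z^a_{(\pti a}Y_{\pti b)}$ cross-terms cancel only with the minus sign, so this is a typo in the displayed formula rather than a flaw in your method; nevertheless your claim that the expansion ``returns the component form in the statement'' is not literally true, and the discrepancy deserves a remark.
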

\begin{proof}
	One only needs to check that, upon changing adapted scale, these tractors are invariant. However this is the case since then
	\begin{align*}
		\nabla_a \tau_0 &\mapsto \nabla_a \tau_0 + \Upsilon_a \tau_0,& W_a{}^{\pti{a}} &\mapsto W_a{}^{\pti{a}} + \Upsilon_a X^{\pti{a}},\\
		Z^a_{\pti{a}} & \mapsto Z^a_{\pti{a}}, & Y_{\pti{a}} & \mapsto Y_{\pti{a}} - \Upsilon_a Z^a_{\pti{a}},
	\end{align*}
	and $n^a\Upsilon_a =0$ due to the fact that we are restricting to adapted scales.
\end{proof}

As was already suggested by Proposition \ref{Proposition: non effective Cartan geometry from effective}, the Cartan geometry from Theorem \ref{Theorem: Cartan geometry on Ti/Spi} is not generic, rather one has.

\begin{prop}\label{Prop:ProjectiveStructureHolonomyReduction}\mbox{}
	
	The Cartan geometry given by Theorem \ref{Theorem: Cartan geometry on Ti/Spi} is obtained from the projective Cartan geometry on $\eB$ by a holonomy reduction to $ISO(1,n-1)$. 
	
	This follows from the fact that $I^{\pti{a}}$ and $H_{\pti{a}\pti{b}}$ are parallel transported
	\begin{align*}
		\nabla_c I^{\pti{a}} &=0, & \nabla_c H_{\pti{a}\pti{b}} &=0. 
	\end{align*}
\end{prop}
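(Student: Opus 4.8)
The plan is to reduce the statement to the two displayed parallelism equations, which is all the proof really requires. First I would recall the general mechanism: $ISO(1,n-1)=SO(1,n-1)\ltimes\R^{n}$ embeds in $PSL(n+1)$ as the stabiliser of a line $[I]$ in the standard tractor representation $\R^{n+1}$ together with a symmetric bilinear form $H$ of rank $n$ whose radical is exactly $[I]$ and which has Lorentzian signature on $\R^{n+1}/[I]$ --- the same elementary subgroup computation already used for the Poincaré groups in \eqref{Non effective model: G and P groups}, in Theorem~\ref{Thm:ExtensionToBoundary} and in \cite{RSTA20230042}. By the holonomy reduction machinery of \cite{cap_holonomy_2014} (used throughout the paper), a pair of parallel tractors $I^{\pti{a}}$, $H_{\pti{a}\pti{b}}$ of this algebraic type on $\eB$ is exactly the data of a holonomy reduction of the normal projective Cartan connection of $[\nabla]$ to $ISO(1,n-1)$; torsion-freeness of the strong Carrollian connection $\nabla$ then forces the underlying reduced Cartan geometry to coincide with the one produced by Theorem~\ref{Theorem: Cartan geometry on Ti/Spi}. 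The algebraic side-conditions on $(I^{\pti{a}},H_{\pti{a}\pti{b}})$ --- rank and signature, inherited from the Einstein metric $\bar h$ on $\mathcal{H}_\pm$, and $H_{\pti{a}\pti{b}}I^{\pti{b}}=0$ --- are immediate from the explicit formulas established just above, so the whole content is $\nabla_c I^{\pti{a}}=0$ and $\nabla_c H_{\pti{a}\pti{b}}=0$.

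To prove those I would use that both the tractors and the notion of parallelism are scale independent, so one may compute in the adapted scale determined by $\tau_0$ itself, in which $\nabla_a\tau_0=0$. Combined with the strong Carrollian structure equations $\nabla_c n^a=0$, $\nabla_c h_{ab}=0$ of Theorem~\ref{Theorem: Cartan geometry on Ti/Spi} this yields $\nabla_c\bar n^a=0$ and $\nabla_c\bar h_{ab}=0$, and the formulas collapse to $I^{\pti{a}}=\bar n^a W^{\pti{a}}_a$ and $H_{\pti{a}\pti{b}}=\bar h_{ab}Z^a_{\pti{a}}Z^b_{\pti{b}}+\tau_0^{2}Y_{\pti{a}}Y_{\pti{b}}$ (up to the sign on $\mathcal{H}_\pm$ fixed so that $H$ is Lorentzian on $\T/I$). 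A direct computation with the projective tractor identities \eqref{TractorConnection} then gives
\[ \nabla_c I^{\pti{a}}= -\big(\bar n^a P_{ca}\big)X^{\pti{a}},\qquad \nabla_c H_{\pti{a}\pti{b}}=2\big(\tau_0^{2}P_{cd}-\bar h_{cd}\big)Z^d_{(\pti{a}}Y_{\pti{b})},\]
so both vanish provided the projective Schouten tensor of the strong Carrollian connection is $P_{ab}=\tau_0^{-2}\bar h_{ab}$ (in particular annihilated by $n^a$, since $n^a\bar h_{ab}=0$). But this identity is precisely what the middle line of \eqref{Induced effective Cartan geometry} encodes: by \eqref{ExtendedConnection} and \eqref{TractorConnection} the $\tau_0^{-1}X^{A}\rotpi^{\ei{a}}_{\,\,A}$-component of $\nabla_a\big(\tau_0^{-1}\pi^*\bar W^A_b\rotpi^{\ei{a}}_{\,\,A}\big)$ equals $-P_{ab}$, which \eqref{Induced effective Cartan geometry} records as $\mp h_{ab}$. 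This closes the proof.

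The step I expect to be the real (if still modest) obstacle is the sign and signature bookkeeping in this last comparison: one must normalise the densitised Carrollian metric $\bar h_{ab}$ on $\eB$, the Einstein metrics on $\mathcal{H}_+$ and $\mathcal{H}_-$, and the term $\mp h_{ab}$ in \eqref{Induced effective Cartan geometry} consistently on \emph{both} boundary components, so that $\nabla_c H_{\pti{a}\pti{b}}$ genuinely cancels in each case and $H_{\pti{a}\pti{b}}$ indeed carries the Lorentzian signature on $\T/I$ needed for the reduction group to be $ISO(1,n-1)$ rather than some other $ISO$. Everything else --- the tractor-splitting manipulations, and the passage from parallel tractors to an honest holonomy reduction via \cite{cap_holonomy_2014} --- is routine.
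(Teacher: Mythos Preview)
Your approach is correct and follows the same route as the paper: compute in the adapted scale $\tau_0$ and verify parallelism of $I^{\pti{a}}$ and $H_{\pti{a}\pti{b}}$ directly. The paper's own proof is a one-liner --- it simply asserts that in the scale $\tau_0$ the projective Cartan connection \emph{is} \eqref{Induced effective Cartan geometry} and leaves the check to the reader --- whereas you spell out the intermediate step, namely that everything reduces to the Schouten identity $P_{ab}=\tau_0^{-2}\bar h_{ab}$, and you correctly flag the sign bookkeeping between $\mathcal{H}_+$ and $\mathcal{H}_-$ as the only place requiring care.

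One small remark: your extraction of the Schouten tensor from the middle line of \eqref{Induced effective Cartan geometry} implicitly uses that the Carrollian Cartan connection of Theorem~\ref{Theorem: Cartan geometry on Ti/Spi}, viewed as an $SL(n+1)$-valued connection via $ISO(1,n-1)\hookrightarrow SL(n+1)$, coincides with the \emph{normal} projective tractor connection of $[\nabla]$. This is exactly the identification the paper asserts without proof, so you are at the same level of rigor; a fully self-contained argument would instead compute the Ricci tensor of the strong Carrollian connection $\nabla$ directly (it is $\pm(n-1)h_{ab}$, inherited from the Einstein metric on $\mathcal{H}_\pm$) and read off $P_{ab}$ from that.
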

\begin{proof}
	Since the content of the above proposition is written in a projectively invariant form, we can prove it in any scale that we like. In the adapted scale $\tau_0$, the projective Cartan connection is simply given by \eqref{Induced effective Cartan geometry} and one can check that it satisfies $\nabla_c I^{\pti{a}} =0$, $\nabla_c H_{\pti{a}\pti{b}} =0$.
\end{proof}

To conclude this section, we observe that it would have also been possible to first construct the projective structure and deduce the Carrollian structures from the holonomy reduction. This point of view would emphasise the strong inspiration of the present work from the picture developed in~\cite{RSTA20230042}. The projective structure obtained here on the extended boundary, and the related question of its projective compactness, would be a natural starting point for recovering conformal null infinity $\scri$ from $\Spi/\Ti$ as the projective boundary; this will be the object of future investigations.

\newpage
\appendix
\section{Proof of Lemma~\ref{boundary gauge change of tau}}\label{sec:ProofChangeOfupsilon}
In this section we resume the notations introduced in Section~\ref{sec:HolonomyReductionOfNonEffective}. We recall that, in the statement of Lemma~\ref{boundary gauge change of tau}, we set $\tilde{\tau}=\tau \mp \omega\pscale$ for some arbitrary function $\omega$, to simplify the presentation we will only treat the case with \enquote{+}. 
Let $\bnabla_a$ (resp. $\tilde\bnabla_a$) be the projective scale determined by $\tau$ (resp. $\tilde\tau$), for convenience, introduce:\[\Upsilon_a= \tilde\tau^{-1}\nabla_a\tilde\tau \Rightarrow \tilde\bnabla_a =\bnabla_a-\Upsilon_a,\]
and:
\[ \eta^a=\bar\lambda N^a = \nu^{-1}\pscale^{-2} \zeta^{ab}\nabla_b\pscale \Rightarrow \eta^a\nabla_a\pscale =1.\]

Our goal is to determine the relationship between the distinguished gauges $L_\ei{A}$ and $\tilde{L}_\ei{A}$ these scales determine by Proposition~\ref{Proposition: existence and unicity of the gauge}. To this end it is sufficient to compute the difference \[\bar{\chi}_A=\tilde{\chi}_A-\tilde{\chi}_A\]
where $\chi_A$ (resp. $\tilde{\chi}_A$) are defined by Eq.~\eqref{RequiredChangeFromMetricGauge}.

Now:
\[ \bnabla_a\tilde\tau = \omega\bnabla_a \pscale + \bnabla_a \omega \pscale.\]
Hence:
\begin{align*}\Upsilon_a=\tau^{-1}\left(\omega\bnabla_a\pscale + (\bnabla_a\omega-\omega^2\tau^{-1}\bnabla_a \pscale)\pscale) \right.&+(\omega^3\bnabla_a\pscale\tau^{-2}-\omega\nabla_a \omega\tau^{-1})\pscale^2 +O(\pscale^3). \end{align*}
We will also need to compute, $\zeta^{ab}\Upsilon_a$. Using that $\zeta^{ab}\nabla_a\pscale =\nu\pscale^2 \eta^a$: 
\begin{align*}
\zeta^{ab}\Upsilon_a&=\tau^{-1}\left(\pscale^2\nu \omega \eta^b +\zeta^{ab}\nabla_a \omega\pscale -\omega\zeta^{ab}\nabla_a \omega \tau^{-1}\pscale^2 + O(\pscale^3)\right)
\\&=\tau^{-1}\pscale\bigg(\zeta^{ab}\nabla_a \omega +(\nu \omega \eta^b-\omega\zeta^{ab}\nabla_a \omega\tau^{-1})\pscale+O(\pscale^2) \bigg)
\end{align*}
By definition, we have:
\[\tilde\nu = \nu -2\pscale \nu \eta^b\Upsilon_b + \zeta^{ab}\Upsilon_a\Upsilon_b.\]
We evaluate the two terms separately, introducing the \emph{weighted} quantities:\[ \nabla_\eta =\eta^a\nabla_a  \quad \textrm{ and } \quad |\nabla \omega|^2=\zeta^{ab}\nabla_a \omega\nabla_b \omega,\] 
one may write:
\[\eta^a\Upsilon_a=\tau^{-1}(\omega +(\nabla_\eta \omega -\omega^2\tau^{-1})\pscale+(\omega^3\tau^{-2}-\omega\nabla_\eta\tau^{-1})\pscale^2+O(\pscale^3))\]
and
\[\zeta^{ab}\Upsilon_a\Upsilon_b=\tau^{-2}\pscale^2(\omega^2\nu + |\nabla \omega|^2) +O(\pscale^3) \]
Hence:
\begin{align*} \tilde\nu = \nu - 2\nu\pscale\tau^{-1}\omega &+ (3\nu\tau^{-2}\omega^2+\tau^{-2}|\nabla \omega|^2 - 2\nu \nabla_\eta\tau^{-1})\pscale^2 +O(\pscale^3)\end{align*}
from which it follows that
\begin{align*}
\tilde\nu^{-1}=\nu^{-1}\big(1+2\pscale\tau^{-1}\omega&+(\omega^2\tau^{-2}-\nu^{-1}\tau^{-2}|\nabla \omega|^2 + 2\nabla_\eta\tau^{-1})\pscale^2+O(\pscale^3)\big).
\end{align*}
It is now straightforward to compute $\tilde\chi_A$ in terms of $\chi_A$. Working in the splitting determined by $\bnabla$, we have:
\[\tilde\chi_A=-\frac{1}{2}\pscale^{-1}\tilde\nu^{-1}Y_A +\frac{1}{2}\pscale^{-2}\tilde\nu^{-1}(\nabla_a\pscale - 2\Upsilon_a\pscale)Z^a_A.\]
The first component is easy seen to be:
\[-\frac{1}{2}\pscale^{-1}\nu^{-1} - \tau^{-1}\nu^{-1}\omega + O(\pscale).\]
For the second, we work order by order, ignoring the factor of $\frac{1}{2}$ and writing:
\[ \nabla_a\pscale -2\Upsilon_a\pscale= \nabla_a\pscale - 2\pscale \omega \nabla_a\pscale\tau^{-1} - 2\pscale^2\tau^{-1}(\nabla_a \omega -\omega^2\tau^{-1}\nabla_a\pscale)+O(\pscale^3)\]
we obtain the following expressions at each order:
\begin{center}
\def\arraystretch{1.5}
\begin{tabular}{c|l}
Order & \\\hline
$\pscale^{-2}$ &  $\nu^{-1}\pscale^{-2}\nabla_a\pscale$\\\hline
$\pscale^{-1}$ &$2\tau^{-1}\omega\nu^{-1}\nabla_a\pscale - 2\nu^{-1} \omega \nabla_a\pscale\tau^{-1}=0$\\\hline
 \multirowcell{3}{$\pscale^{0\phantom{-}}$}&$\nu^{-1}(\omega^2\tau^{-2}-\nu^{-1}\tau^{-2}|\nabla \omega|^2+2\nabla_\eta \omega\tau^{-1})\nabla_a\pscale$ \\& $-4\tau^{-1}\omega^2\nabla_a\pscale \nu^{-1}$ \\&
 $-2\nu^{-1}\tau^{-1}(\nabla_a \omega -\omega^2\tau^{-1}\nabla_a\pscale)$
\end{tabular}
\end{center}
which yields the desired result, after observing that $\nabla \nu =O(\pscale)$ and $\nabla \tau= O(\pscale)$.

\printbibliography[title=Bibliography]
 \end{document}